\newcommand*\LyXThinSpace{\,\hspace{0pt}}
\DeclareTextSymbolDefault{\textquotedbl}{T1}
\providecommand{\tabularnewline}{\\}
\theoremstyle{plain}
\newtheorem{thm}{\protect\theoremname}
\theoremstyle{definition}
\newtheorem{example}[thm]{\protect\examplename}
\theoremstyle{remark}
\newtheorem{rem}[thm]{\protect\remarkname}
\theoremstyle{definition}
\newtheorem{defn}[thm]{\protect\definitionname}
\providecommand{\tabularnewline}{\\}
\theoremstyle{plain}
\theoremstyle{plain}
\newtheorem{theorem}{Theorem}[section]
\newtheorem{definition}{Definition}[section]
\newtheorem{lemma}[theorem]{Lemma}\theoremstyle{remark}
\renewcommand{\cite}{\citet}
\renewcommand{\d}{\,\mathrm{d}}
\newcommand{\dd}{\overset{\mathrm{law}}{=}}
\newcommand{\p}{\mathbb{P}}
\newcommand{\Ran}{\mathrm{Ran}}
\newcommand{\var}{\mathrm{Var}} 
\newcommand{\Var}{\mathrm{Var}}   %
\newcommand{\E}{\mathbb{E}}    %
\newcommand{\R}{\mathbb{R}}    %
\newcommand{\N}{\mathbb{N}}    %
\newcommand{\osc}{\mathrm{osc}} 
\DeclareMathOperator*{\argmax}{arg\,max}
\DeclareMathOperator*{\argmin}{arg\,min}
\newcommand{\ee}{\varepsilon}
\newcommand{\n}[1]{\left\lVert#1\right\rVert}
\newcommand{\bM}{\mathbf{M}}
\newcommand{\bN}{\mathbf{N}}
\newcommand{\bX}{\mathbf{X}}
\newcommand{\bx}{\mathbf{x}}
\newcommand{\cR}{\mathcal{R}}
\def\d{\mathrm{d}}
\DeclareMathOperator\supp{supp}
\def\lawis{\buildrel \mathrm{law} \over \sim}
\newcommand{\bone}{ {\mathbbm{1}} }
\newcommand{\G}{\mathcal{G}}
\newcommand{\lst}{\preceq_{\mathrm{st}}}
\renewcommand{\ge}{\geqslant}
\renewcommand{\le}{\leqslant}
\renewcommand{\geq}{\geqslant}
\renewcommand{\leq}{\leqslant}
\renewcommand{\epsilon}{\varepsilon}
\providecommand{\examplename}{Example}
\providecommand{\remarkname}{Remark}
\providecommand{\theoremname}{Theorem}
\author{Zhenyuan Zhang\thanks{Department of Mathematics, Stanford University. Email: zzy@stanford.edu} \and Hengrui Luo\thanks{Corresponding Author. Department of Statistics, Rice University; Computational Research Division, Lawrence Berkeley National Laboratory. Email: hrluo@rice.edu}}
\providecommand{\definitionname}{Definition}
\providecommand{\definitionname}{Definition}
\providecommand{\examplename}{Example}
\providecommand{\remarkname}{Remark}
\providecommand{\theoremname}{Theorem}
\begin{document}
\title{Stabilizing the Splits through Minimax Decision Trees}
\maketitle

\begin{abstract}

By revisiting the end-cut preference (ECP) phenomenon associated with a single CART (Breiman et al.~(1984)), we introduce MinimaxSplit decision trees, a robust alternative to CART that selects splits by minimizing the worst-case child risk rather than the average risk. For regression, we minimize the maximum within-child squared error; for classification, we minimize the maximum child entropy, yielding a C4.5-compatible criterion. We also study a cyclic variant that deterministically cycles coordinates, leading to our main method of cyclic MinimaxSplit decision trees.

We prove oracle inequalities that cover both regression and classification, under mild marginal non-atomicity conditions. The bounds control the tree’s global excess risk by local worst-case impurities and yield fast convergence rates compared to CART. We extend the analysis to a random-dimension forest variant that subsamples coordinates per node.

Empirically, (cyclic) MinimaxSplit trees and their forests improve over baselines on structured heterogeneous data such as EEG amplitude regression over fixed time horizons and image denoising, framed as non-parametric regression on spatial coordinates. 

\vspace{0.3cm}
 \textit{Keywords}: CART; convergence rates; decision tree models; non-parametric
regression; classification 
\end{abstract}
\tableofcontents{}

\section{Introduction}

Tree-based predictors are classical non-parametric learners, from early
systems such as THAID to the CART formalization \citep{morgan1973thaid,breiman1984classification},
with surveys charting both breadth and limitations \citep{gordon1984almost,loh2014fifty}
and recent work revisiting large-scale aspects \citep{klusowski2024large}.
A persistent and practically important pathology of single-tree greedy
splitting is the \emph{end-cut preference} (ECP): standard impurity/variance
criteria favor splits near node boundaries, especially under weak or no signal.
ECP has long been observed empirically \citep{morgan1973thaid,torgo2001study,su2024smooth,wang2025study}
and is now rigorously characterized in simple settings, where noisy
directions drive splits toward extremes \citep{breiman1984classification,cattaneo2022pointwise}. A simple example of ECP is given by Figure \ref{fig:Comparison_different_trees-1} below; see also Figure \ref{fig:sine}.

For a \emph{single} deep tree, ECP can be harmful: aggressive boundary
splits create highly unbalanced children, amplify estimation noise,
and can stall meaningful refinement along informative coordinates.
However, \citet{ishwaran2015effect} argues that the same tendency
can be \emph{beneficial} in \emph{deep random forests}: when a candidate
split falls on a noise variable, an end-cut keeps one child large,
preserving downstream sample size so later splits (possibly on strong
variables) can recover the signal; even along strong variables, ECP-like
behavior can be adaptive in regions with little local signal. Thus,
while ECP degrades a single tree's stability, it may aid an ensemble
by lowering tree correlation and enabling recovery via aggregation.
At the same time, the forest setting introduces its own costs: ECP
interacts with candidate-set multiplicity and variable-usage bias,
skewing split frequencies and importance metrics, and further reducing
model stability. Existing methods to mitigate ECP include the
delta splitting rule \citep{morgan1973thaid}, regularization of node
size \citep{torgo2001study}, and the Smooth Sigmoid Surrogate \citep{su2024smooth}.
We refer to \citet{wang2025study} for a more comprehensive review
of the ECP.

As illustrated in Figure \ref{fig:Comparison_different_trees-1},
ECP arises from the greedy split search itself. At each node, the
algorithm scans many thresholds and chooses the one with the largest impurity decay. Thresholds near the boundary isolate very
few points; the sample mean of such a tiny child has a high variance,
so even pure noise can create a large difference between child means
and an inflated impurity reduction. Taking the maximum over many candidate
cut-off points amplifies this selection bias---extremal thresholds are
disproportionately likely to `win'. 

Our focus is on \emph{explicitly mitigating ECP at the algorithmic
level} for trees, while retaining the transparency that makes single decision trees 
stable. 
Beyond
prediction, this design goal is 
to generate balanced, nondegenerate
splits that yield partitions whose geometry is easier to understand and
communicate than the heterogeneous, boundary-driven partitions that
arise under ECP in deep trees and ensembles. 

In this paper, we revisit ECP empirically and theoretically, clarifying
the cases in which standard CART-style criteria drift toward extreme cut-off points and
how this differs between single trees and forests \citep{breiman1984classification,morgan1973thaid,torgo2001study,su2024smooth,ishwaran2015effect,wang2025study,cattaneo2022pointwise}.
Then, we introduce a Minimax splitting rule with a cyclic coordinate
schedule that systematically discourages degenerate child nodes, thereby
mitigating ECP while preserving stability. As a consequence, our splitting rule admits a faster convergence guarantee compared to standard CART --- we establish exponential
error decay under mild conditions and relate our guarantees to existing
theory \citep{chi2022asymptotic,syrgkanis2020estimation,wager2015adaptive,mazumder2024convergence,cattaneo2024convergence}.
Conceptually, the analysis
connects to partition-based martingale approximations \citep{simons1970martingale,zhang2023exact}
and complements asymptotic and inferential developments for trees/forests
\citep{klusowski2024large,chi2022asymptotic,syrgkanis2020estimation,wager2015adaptive,mazumder2024convergence,cattaneo2024convergence},
as well as recent scalable partitioning frameworks \citep{liu2023convergence,luo2023sharded,luo2024efficient,liu2024spatial}
and complexity-theoretic perspectives on hierarchical splits \citep{blanc2020universal,o2005every}.


\begin{figure}[t!]
\centering \includegraphics[width=0.95\textwidth]{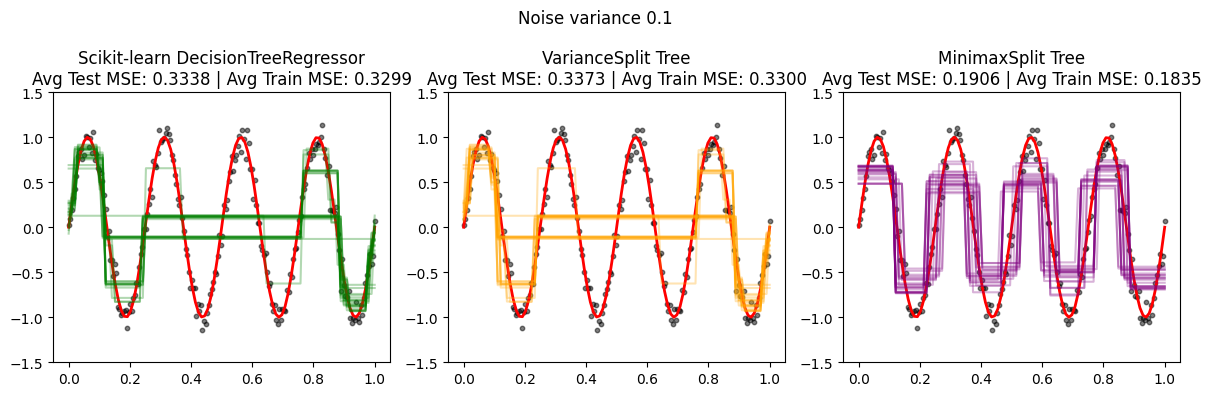}
\\

\includegraphics[width=0.95\textwidth]{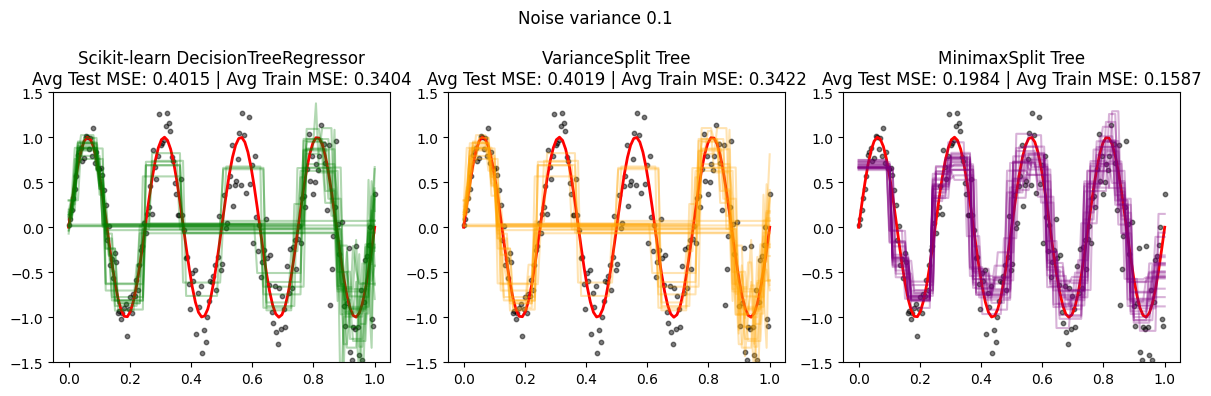} \caption{\label{fig:Comparison_different_trees-1}The comparison of the scikit-learn DecisionTreeRegressor, our MinimaxSplit tree, and VarianceSplit tree
on a sinusoidal target function (in red)  $Y=f(X)+\epsilon=\sin(4\pi X)+\epsilon$,
$X\lawis\mathrm{Unif}(0,1),$ and $\epsilon\lawis \mathcal{N}(0,0.1)$,
with maximum depth 4 (top row) and 5 (bottom row). 
We display 10 different model fits based on 10 different
batches of training sets of size 100, and the average mean squared error (MSE) evaluated
on a different training set across 10 batches.}
\end{figure}

\section{Minimax decision trees for regression}

\label{sec:minimax}

The goal of this section is to construct (cyclic) MinimaxSplit decision tree
algorithms that avoid ECP and attain exponential convergence. We first focus on the regression setting and we discuss the classification setting in Section \ref{sec:Minimax-classify}. In short, in
CART-style regression, a decision tree greedily splits in order to minimize
the \textit{sum} of variances over the responses of the child nodes \citep{liu2023convergence,breiman1984classification}; instead, we propose to greedily minimize the \textit{maximum} variance among the
child nodes. The roadmap of this section is as follows.
\begin{itemize}
    \item In Section \ref{sec:formulation}, we rigorously introduce
the algorithms (summarized in Figure \ref{fig:alg summary}). Crucially, we distinguish two distinct algorithms, where we may optimize over all dimensions at each split (the MinimaxSplit algorithm) or we may pre-select a cyclic schedule of dimensions for each depth (the cyclic MinimaxSplit algorithm). 
\item In Section \ref{sec:marg atomless}, we study exponential rates for marginally atomless
joint distributions (that may not arise from empirical measures $\mathbb{P}_{N}$) for the cyclic MinimaxSplit algorithm.
\item Section \ref{sec:empirical}
builds upon the marginally atomless case, covering more generally the non-marginally
atomless setting and, in particular, the empirical risk bounds for the cyclic MinimaxSplit algorithm.
\item In Section \ref{sec:ASP}, we discuss why there are no general analogous results for the MinimaxSplit algorithm, even though it is shown to perform well empirically even in high-dimensional settings (Appendix \ref{sec:high-dim}). Intuitively, the MinimaxSplit algorithm may fail to break symmetry if strong symmetry is imposed in the underlying samples.
\end{itemize}
  All
proofs can be found in Appendix \ref{sec:proofs}. Throughout, we use the notation $[d]=\{1,2,\dots,d\}$ for $d\in\N$. All splits we consider in this paper will be binary.

\subsection{Algorithm formulation}

\label{sec:formulation}

We begin by recapping the greedy splitting regime in the VarianceSplit
algorithm to construct an efficient decision tree. We need the following
notion of a splittable set. A similar notion is also introduced by
Definition 3.1 of \citet{klusowski2024large}.

\begin{definition} Consider a joint distribution $(\bX,Y)$ on $\R^{d+1}$. We say a set $A\subseteq\R^{d}$
is $(\bX,Y)$-\textit{unsplittable}, or \textit{unsplittable},
if any of the following occurs: \begin{itemize}\par 

\item $\p(\bX\in A)=0$;\par 

\item $\p(\bX\in A)>0$ and $Y\mid\bX\in A$ is a constant;\par 

\item $\p(\bX\in A)>0$ and $\bX\mid\bX\in A$ is a constant. \end{itemize}
Otherwise, we say that $A$ is $(\bX,Y)$-\textit{splittable}, or
\textit{splittable}. \end{definition}

The greedy VarianceSplit construction (Section 8.4 of \citet{breiman1984classification})
proceeds by introducing nested partitions $\{\pi_{k}\}_{k\geq0}$
of $\R^{d}$ with axis-aligned borders, in a way that sequentially
and greedily minimizes the total risk $\E[(Y-M_{k})^{2}]$ at each
step $k$ (or depth $k$, while splitting from the fixed partition
$\pi_{k-1}$), where $M_{k}$ is taken as the conditional mean of
$Y$ given the $\sigma$-algebra generated by the collection $\bone_{\{\bX\in A\}},\,A\in\pi_{k}$.
So, the partitions $\{\pi_{k}\}_{k\geq0}$ of the input space $\R^d$ are
nested and we require that each $\pi_{k}$ consists of sets that split
each splittable element $A\in\pi_{k-1}$ into two hyper-rectangles
with axis-aligned borders. 

Formally, suppose that a hyper-rectangle $A=[a_{1},b_{1})\times\dots\times[a_{d},b_{d})$
belongs to the partition $\pi_{k-1}$ and is splittable (if $A$ is unsplittable, we do not split $A$ and $A\in\pi_\ell$ for every $\ell\geq k$). In particular, $M_{k-1}(\omega)=\E[Y\mid\bX\in A]$
if $\bX(\omega)\in A$. 
To find a split of $A$ at depth $k$, the VarianceSplit algorithm
looks for a dimension $j\in[d]$ and covariate $x_{j}\in[a_{j},b_{j})$
such that the remaining risk is the smallest after splitting $A$
at $x_{j}$ in covariate $j$. In other words, one looks for 
\begin{align}
\begin{split}(j,\hat{x}_{j}) & =\argmin_{\substack{j\in[d]\\
x_{j}\in[a_{j},b_{j})
}
}\Big(\p(\bX\in A,\,X_{j}<x_{j})\var(Y\mid\bX\in A,\,X_{j}<x_{j})\\
 & \hspace{2cm}+\p(\bX\in A,\,X_{j}\geq x_{j})\var(Y\mid\bX\in A,\,X_{j}\geq x_{j})\Big)\\
 & =\argmin_{\substack{j\in[d]\\
x_{j}\in[a_{j},b_{j})
}
}\Big(\E\big[(Y-\E[Y\mid\bX\in A,\,X_{j}<x_{j}])^{2}\bone_{\{\bX\in A,\,X_{j}<x_{j}\}}\big]\\
 & \hspace{2cm}+\E\big[(Y-\E[Y\mid\bX\in A,\,X_{j}\geq x_{j}])^{2}\bone_{\{\bX\in A,\,X_{j}\geq x_{j}\}}\big]\Big)\\
 & =:\argmin_{\substack{j\in[d]\\
x_{j}\in[a_{j},b_{j})
}
}(V_{\text{left}}(j)+V_{\text{right}}(j)),
\end{split}
\label{eq:variance tree def}
\end{align}
where we break ties arbitrarily.\footnote{Strictly speaking, the $\argmin$ in \eqref{eq:variance tree def}
may not always be attained for general joint distributions $(\bX,Y)$,
unless we assume that $\bX$ is marginally either atomless or has
a finite support, which will always be the case in our analysis (such
as the atomicity condition in Theorem \ref{thm:atomic} below). However,
in the general setting, if we allow splits in which atoms can be duplicated
and assigned to both nodes (instead of only on the right node as in
\eqref{eq:variance tree def}), $\argmin$ can be attained. In this
paper, we will implicitly assume that $\argmin$ is always attained
unless otherwise stated. \label{foot}} Consequently, the splittable set $A$ splits into the sets $A_{L}:=[a_{1},b_{1})\times\dots\times[a_{j},\hat{x}_{j})\times\dots\times[a_{d},b_{d})$
and $A_{R}:=[a_{1},b_{1})\times\dots\times[\hat{x}_{j},b_{j})\times\dots\times[a_{d},b_{d})$
to form its two descendants in the set $\pi_{k}$, and define 
\begin{align}
M_{k}(\omega)=\begin{cases}
\E[Y\mid\bX\in A,\,X_{j}<\hat{x}_{j}] & \text{ if }\bX(\omega)\in A_{L};\\
\E[Y\mid\bX\in A,\,X_{j}\geq \hat{x}_{j}] & \text{ if }\bX(\omega)\in A_{R}.
\end{cases}\label{eq:Yk def}
\end{align}
In other words, at depth $k\geq0$ and after constructing the partition
$\pi_{k}$, define $M_{k}$ by 
\begin{align}
M_{k}(\omega)=\E[Y\mid\bX\in A],\quad\text{if }\bX(\omega)\in A,\quad\text{for }A\in\pi_{k}.\label{eq:Mk!!}
\end{align}
This leads to an associated process $\{M_{k}\}_{k\geq0}$ with the joint distribution $(\bX,Y)$.\footnote{The process $\{M_{k}\}_{k\geq0}$ is a martingale,
but we will not use this fact in the rest of the main text.} To connect $M_{k}$ with the estimator $\hat{m}_{k}$
of the regression function at step $k$, we note that for any $\bx\in\R^{d}$,
there exists a unique $A_{\bx}\in\pi_{k}$ containing $\bx$, and then
$\hat{m}_{k}(\bx)=\E[Y\mid\bX\in A_{\bx}]$. 

\begin{figure}[t!]
\centering \includegraphics[width=0.95\textwidth]{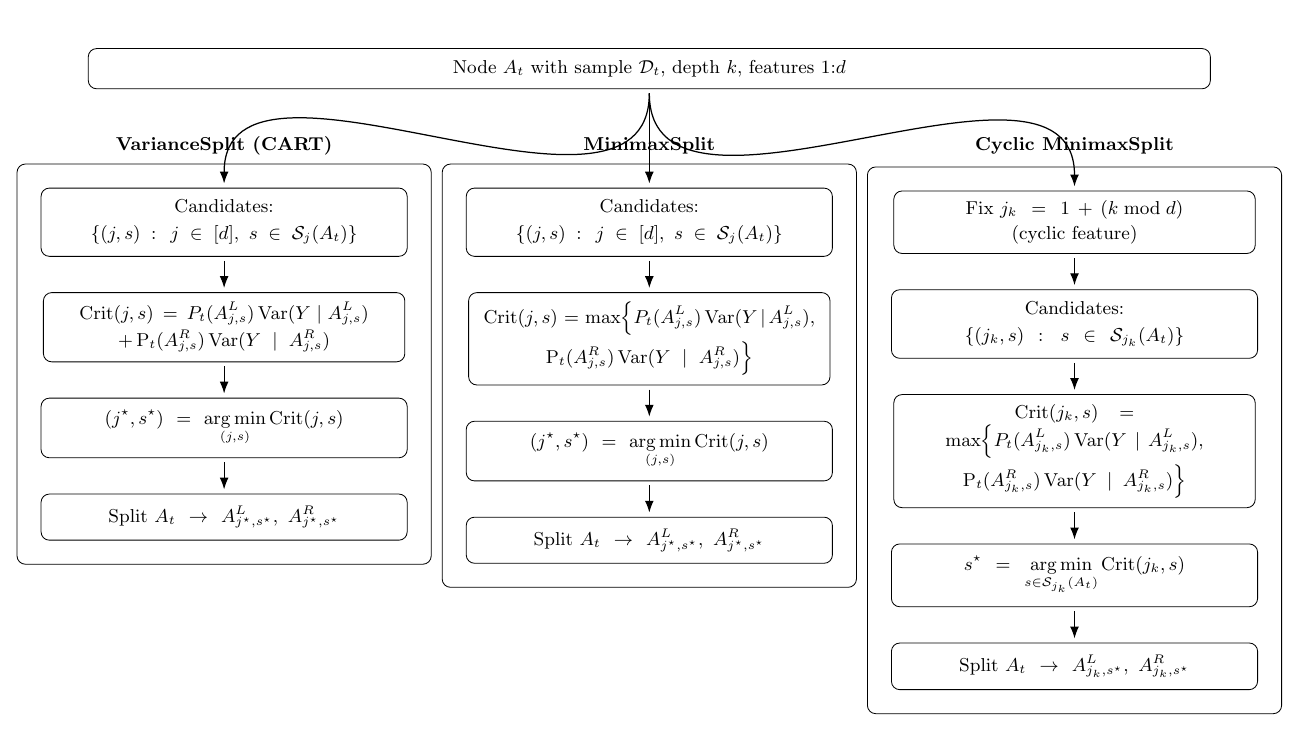}
\caption{The comparison of the VarianceSplit,
MinimaxSplit and cyclic MinimaxSplit trees. Here we use the notation:\ $A_{j,s}^{L}=A_{t}\cap\{x_{j}< s\}$,\ $A_{j,s}^{R}=A_{t}\cap\{x_{j}\geq s\}$;\ $\mathcal{S}_{j}(A_{t})$
are candidate thresholds (e.g., midpoints of sorted $x_{j}$ in $A_{t}$);\ $P_{t}(B)=\p(\bX\in B\mid \bX\in A_{t})$.
}
\label{fig:alg summary}
\end{figure}

\paragraph{The MinimaxSplit algorithm.}

We now introduce the MinimaxSplit algorithm. A common feature of the
MinimaxSplit and VarianceSplit algorithms is that they both start
from a nested sequence of partitions $\{\pi_{k}\}_{k\geq0}$ of $\R^{d}$
consisting of (at-most) binary splits into hyper-rectangles with axis-aligned
borders. The two algorithms differ in the way the partition is constructed:
in the MinimaxSplit setting, the split is chosen to minimize the maximum
variance among the two descendants. Formally, for a splittable set
$A=[a_{1},b_{1})\times\dots\times[a_{d},b_{d})\in\pi_{k-1}$, define
the split location as 
\begin{align}
\begin{split}(j,\hat{x}_{j}) & =\argmin_{\substack{j\in[d]\\
x_{j}\in[a_{j},b_{j})
}
}\max\Big\{\E[(Y-\E[Y\mid\bX\in A,\,X_{j}<x_{j}])^{2}\bone_{\{\bX\in A,\,X_{j}<x_{j}\}}],\\
 & \hspace{3cm}\E[(Y-\E[Y\mid\bX\in A,\,X_{j}\geq x_{j}])^{2}\bone_{\{\bX\in A,\,X_{j}\geq x_{j}\}}]\Big\}\\
 & =\argmin_{\substack{j\in[d]\\
x_{j}\in[a_{j},b_{j})
}
}\max\Big\{\p(\bX\in A,\,X_{j}<x_{j})\var(Y\mid\bX\in A,\,X_{j}<x_{j}),\\
 & \hspace{3cm}\p(\bX\in A,\,X_{j}\geq x_{j})\var(Y\mid\bX\in A,\,X_{j}\geq x_{j})\Big\}\\
 & =\argmin_{\substack{j\in[d]\\
x_{j}\in[a_{j},b_{j})
}
}\max\Big\{ V_{\text{left}}(j),V_{\text{right}}(j)\Big\},
\end{split}
\label{eq:minimaxpt}
\end{align}
where we break ties arbitrarily. We call this the \textit{MinimaxSplit
rule}. After obtaining the partition $\pi_{k}$, define $M_{k}$ through
\eqref{eq:Mk!!}. 

\paragraph{The cyclic MinimaxSplit algorithm.}

The cyclic MinimaxSplit algorithm is a variation of the MinimaxSplit
algorithm. Instead of optimizing over all covariates $j\in[d]$ in
\eqref{eq:minimaxpt}, we cycle through the $d$ covariates as the
tree grows. That is, for $k\geq0$, let $j=j_{k}=1+(k~(\text{mod } d))$.
For all splittable sets $A\in\pi_{k}$, we split $A$ in the $j$-th
coordinate to form its two descendants at depth $k+1$. The split location
is then defined as 
\begin{align}
\begin{split}\hat{x}_{j} & =\argmin_{x_{j}\in[a_{j},b_{j})}\max\Big\{\E[(Y-\E[Y\mid\bX\in A,\,X_{j}<x_{j}])^{2}\bone_{\{\bX\in A,\,X_{j}<x_{j}\}}],\\
 & \hspace{3cm}\E[(Y-\E[Y\mid\bX\in A,\,X_{j}\geq x_{j}])^{2}\bone_{\{\bX\in A,\,X_{j}\geq x_{j}\}}]\Big\}\\
 & =\argmin_{x_{j}\in[a_{j},b_{j})}\max\Big\{\p(\bX\in A,\,X_{j}<x_{j})\var(Y\mid\bX\in A,\,X_{j}<x_{j}),\\
 & \hspace{3cm}\p(\bX\in A,\,X_{j}\geq x_{j})\var(Y\mid\bX\in A,\,X_{j}\geq x_{j})\Big\},
\end{split}
\label{eq:minimax split point}
\end{align}
where we break ties arbitrarily. Define $M_{k}$ by \eqref{eq:Mk!!}. 

If $d=1$, the cyclic MinimaxSplit algorithm coincides with the MinimaxSplit
algorithm. If $d>1$, compared to the MinimaxSplit algorithm, the
cyclic MinimaxSplit algorithm is technically more tractable. Moreover,
since all features are considered equally, the cyclic MinimaxSplit
algorithm appears more robust in recovering the underlying geometry
of the regression function, which we illustrate numerically in Section
\ref{sec:minimax numerics}. On the other hand, the cyclic MinimaxSplit
algorithm may be less effective in higher dimensions because the relevant
dimensions may not be easily identified or reached in the first few
splits, yet it avoids a pathological example in Section \ref{sec:ASP}.\\

\subsection{Risk bound }

\label{sec:marg atomless}

The goal of this section is to justify the exponential convergence
under the cyclic MinimaxSplit rule. Assuming marginal non-atomicity,
we show that up to universal constants and model mis-specification,
the approximation error decays exponentially with rate $r^{k}$ for
some explicit $r\in(0,1)$ that depends only on dimension $d$. 

We note here that a crucial difference from the statistics literature is that our analysis of the MinimaxSplit rule first focuses on \textit{general
joint distributions} $(\bX,Y)$ on $\R^{d+1}$, where $\bX\in\R^{d}$
and $Y\in\R$.\footnote{This can be intuitively regarded as empirical measures with infinitely many samples.} Empirical measures are a special case. Note that this does \textit{not} mean that our analysis is restricted to the noiseless case; see the setting in Section \ref{sec:empirical}. The class of
joint distributions we consider will satisfy the following marginally
atomless property.

\begin{definition} Let $d\geq1$ and $\bX=(X_{1},\dots,X_{d})$ be
an $\R^{d}$-valued random vector. We say that $\bX$ (or its law)
is \textit{marginally atomless} if for any $j\in[d]$ and $x\in\R$,
$\p(X_{j}=x)=0$. \end{definition}

The reason for considering general joint distributions is twofold.
First, we provide more general probabilistic results that may be of
interest in other settings. Second, in our derivation of the empirical
risk bounds, it would be more natural and technically convenient to
first study the marginally atomless setting. Although the empirical
measure is not marginally atomless, only minor changes will be required
to take care of the atomic case (to be discussed in Section \ref{sec:empirical} below). 

Let
us start with some simple observations on the MinimaxSplit rule. The
following lemma is a simple consequence of the total variance formula
and will be proved in Appendix \ref{sec:minimax proof}.

\begin{lemma}\label{lemma:continuous} Suppose that $\bX$ is marginally
atomless and $j\in[d]$. Then the function 
\begin{align}
\varphi_{L}:x_{j}\mapsto\p(\bX\in A,\,X_{j}<x_{j})\var(Y\mid\bX\in A,\,X_{j}<x_{j})\label{eq:mono_loss}
\end{align}
is non-decreasing and continuous. Similarly, 
\[
\varphi_{R}:x_{j}\mapsto\p(\bX\in A,\,X_{j}\geq x_{j})\var(Y\mid\bX\in A,\,X_{j}\geq x_{j})
\]
is non-increasing and continuous. \end{lemma} 

\begin{rem}\label{rem:noncty}
    The marginally atomless property of $\bX$ is not required for the monotonicity properties of the maps $\varphi_L$ and $\varphi_R$, but it is necessary for the continuity. The continuity is essential for an exact $1/2$ risk decay bound (see below), but this argument will be relaxed later in the derivation of Theorem \ref{thm:atomic}.
\end{rem}

Choose an arbitrary element $\hat{x}_{j}\in\argmin_{x_{j}}\max\{\varphi_{L}(x_{j}),\varphi_{R}(x_{j})\}.$
Let us first assume that $\varphi_{L}(\hat{x}_{j})\leq\varphi_{R}(\hat{x}_{j})$.
Define $x_{j}^{*}:=\min\{x_{j}:\varphi_{L}(x_{j})=\varphi_{R}(x_{j})\}$.
By the monotonicity of $\varphi_{L}$ and $\varphi_{R}$, we have
$\hat{x}_{j}\leq x_{j}^{*}$, and thus 
\begin{align}
\varphi_{L}(\hat{x}_{j})\leq\varphi_{L}(x_{j}^{*})=\varphi_{R}(x_{j}^{*})\leq\varphi_{R}(\hat{x}_{j}).\label{eq:vp1}
\end{align}
Using \eqref{eq:vp1} and the fact that $\hat{x}_{j}\in\argmin_{x_{j}}\max\{\varphi_{L}(x_{j}),\varphi_{R}(x_{j})\}$,
we have 
\begin{align}
\varphi_{L}(x_{j}^{*})=\varphi_{R}(x_{j}^{*})=\varphi_{R}(\hat{x}_{j}).\label{eq:vp2}
\end{align}
Since the events $A_{L}=\left\{ \mathbf{X}\in A,X_{j}<x_{j}^{*}\right\} $
and $A_{R}=\left\{ \mathbf{X}\in A,X_{j}\geq x_{j}^{*}\right\} $
form a partition of $\{\bX\in A\}$, we have by the total variance
formula, 
\begin{align}
\begin{split}\varphi_{L}(\hat{x}_{j})+\varphi_{R}(\hat{x}_{j})\leq\varphi_{L}(x_{j}^{*})+\varphi_{R}(x_{j}^{*})\leq\E[(Y-\E[Y\mid\bX\in A])^{2}\bone_{\{\bX\in A\}}].
\end{split}
\label{eq:pvar2}
\end{align}
Combining \eqref{eq:vp2} and \eqref{eq:pvar2}, we have 
\begin{align}
\begin{split}\max\Big\{\varphi_{L}(\hat{x}_{j}),\varphi_{R}(\hat{x}_{j})\Big\}=\varphi_{R}(\hat{x}_{j})\leq\frac{1}{2}\E[(Y-\E[Y\mid\bX\in A])^{2}\bone_{\{\bX\in A\}}].\end{split}
\label{eq:1/2var}
\end{align}
 The other case $\varphi_{L}(\hat{x}_{j})\geq\varphi_{R}(\hat{x}_{j})$
is similar. In other words, the remaining risk on each descendant
is at most half the risk on the parent node. Inductively, we gain
a \textit{uniform} exponential decay of the maximum risk among the nodes
at depth $k$: 
\begin{align}
\max_{A\in\pi_{k}}\p(\bX\in A)\var(Y\mid\bX\in A)=\max_{A\in\pi_{k}}\E[(Y-\E[Y\mid\bX\in A])^{2}\bone_{\{\bX\in A\}}]\leq2^{-k}\var(Y).\label{eq:2-k}
\end{align}
This is the key to controlling the ``sizes of the nodes". In the
case of the VarianceSplit construction, such uniform control is not
possible---counterexamples exist due to ECP, which we explain in
the next example. 
\begin{example}
\label{ex:location regression} We revisit (a special case of) the
location regression model analyzed in \citet{cattaneo2022pointwise}.
Let $(X,Y)$ follow the law of an empirical measure of $\mathrm{Unif}(0,1)\times\mathcal{N}(0,1)$
on $[0,1]\times\R$ with $n$ samples.\footnote{$X$ is not marginally atomless, but the example can always be modified
by smoothing the data without affecting its validity.} Suppose that we perform VarianceSplit for one step. Theorem 4.1 of
\citet{cattaneo2022pointwise} implies that, for $n$ large enough,
the split lies between the first and $\sqrt{n}$-th order statistics
with probability exceeding $1/6$. With high probability (meaning
$1-o(1)$ as $n\to\infty$), the right node has remaining risk at
least $(1-o(1))$ while the total risk at the parent node is at most
$(1+o(1))$. By a union bound, we see that with an arbitrary $\ee>0$,
for sufficiently large $n$, 
\[
\p\Big(\max_{A\in\pi_{1}}\p(X\in A)\var(Y\mid X\in A)\geq(1-\ee)\var(Y)\Big)>\frac{1}{7}.
\]
This example also explains why assumptions such as the sufficient impurity decay (SID) condition
are favorable to ensure risk decay when using the splitting criterion 
\eqref{eq:variance tree def} \citep{chi2022asymptotic,mazumder2024convergence}. Roughly speaking, the SID condition assumes that the best population split removes at least a positive fraction of the within-node variance, leading to a lower bound on local CART progress and eventually an exponential convergence rate in depth for CART and random forests.
\end{example}

To gain further intuition about ECP, we consider the event $\{\mathbf{X}\in A\}$
and sub-events $A_{L}=\left\{ \mathbf{X}\in A,X_{j}<\hat{x}_{j}\right\} $
and $A_{R}=\left\{ \mathbf{X}\in A,X_{j}\geq\hat{x}_{j}\right\} $.
Without loss of generality, we may assume $\mathbb{P}(\bX\in A)=1$ and apply
the total variance formula to this partition, yielding 
\begin{align}
\mathbb{P}(\bX\in A)\var(Y)=\var(Y)&=  \E[\var(Y\mid\sigma(A_{L}))]+\var(\E[Y\mid\sigma(A_{L})])\nonumber \\
\begin{split}&=  \var(Y\mid A_{L})\mathbb{P}(A_{L})+\var(Y\mid A_{R})\mathbb{P}(A_{R})\\
 &\hspace{2cm} +\left(\mathbb{E}[Y\mid A_{L}]-\mathbb{E}[Y\mid A_{R}]\right)^{2}(1-\mathbb{P}(A_{L}))\mathbb{P}(A_{L}).
\end{split}
\label{eq:totalvar2}
\end{align}
From this, we see that when ECP occurs, $\mathbb{P}(A_{L})\approx1$
or $0$, the total remaining risk is dominated by the first two terms of \eqref{eq:totalvar2}:
$\var(Y\mid A_{L})\mathbb{P}(A_{L})+\var(Y\mid A_{R})\mathbb{P}(A_{R})=V_{\text{left}}(j)+V_{\text{right}}(j)$.
Therefore, when ECP occurs, the sum of variances does not necessarily
decay fast enough under VarianceSplit.

We consider the additive function class 
\begin{align*}
\G:=\{g(\bx):=g_{1}(x_{1})+\dots+g_{d}(x_{d})\},
\end{align*}
where $\bx=(x_{1},\dots,x_{d})$. For $g\in\G$, define $\n{g}_{\mathrm{TV}}$
as the infimum of $\sum_{i=1}^{d}\n{g_{i}}_{\mathrm{TV}}$ over all
such additive representations of $g$, where $\n{\cdot}_{\mathrm{TV}}$ denotes the total variation of a univariate function of bounded variation. Note that this is the same type of approximation target used by 
\citet{klusowski2024large,mazumder2024convergence} for CART/C4.5: their oracle inequality is also stated
as an infimum over the additive function class $\G$, with a penalty depending on the additive
total-variation norm.

However, for (cyclic) MinimaxSplit, $\max\{V_{\text{left}}(j),V_{\text{right}}(j)\}$
always displays exponential decay \eqref{eq:2-k}, hence ``avoiding''
the ECP phenomenon. In other words, the fast decay of the sum of variances
is not easily guaranteed from the VarianceSplit construction due to
ECP; but the exponential decay of the maximum variance can be assured
by the MinimaxSplit construction, as we show in Theorem \ref{thm:minimax exp decay 2}.
The MinimaxSplit construction approximately equalizes the two child risks, which implies that the larger child risk is at most one half of the parent risk. This local worst-child contraction is the basic mechanism behind the global error bound below.
\begin{thm}
\label{thm:minimax exp decay 2} Suppose that $\bX$ is marginally
atomless. Let the associated   $\{M_{k}\}_{k\geq0}$ be constructed
from the cyclic MinimaxSplit algorithm. Then uniformly for any $\delta>0$
and $k\geq0$, 
\begin{align}
\begin{split}\E[(Y-M_{k})^{2}] & \leq\inf_{g\in\G}\bigg(\Big((1+\delta)+\frac{2(1+\delta)(1+\delta^{-1})}{3\cdot2^{2\lfloor k/d\rfloor/3}}\Big)\,\E[(Y-g(\bX))^{2}]\\
 & \hspace{2cm}+(1+\delta^{-1})\bigg(\frac{1}{3}+\Big(\frac{1+\delta^{-1}}{4}\Big)^{2/3}\bigg)2^{-2\lfloor k/d\rfloor/3}\n{g}_{\mathrm{TV}}^{2}\bigg).
\end{split}
\label{eq:d-dim rate}
\end{align}
\end{thm}

\begin{rem}[The MinimaxSplit analogue]
Theorem \ref{thm:minimax exp decay 2} does not hold in general for
$d\geq2$ if we replace the cyclic MinimaxSplit algorithm by the MinimaxSplit
algorithm, a phenomenon we illustrate in Section \ref{sec:ASP} below.
Nevertheless, the MinimaxSplit algorithm often achieves desirable
risk decay in \textit{high-dimensional} settings in which cyclic MinimaxSplit is inefficient; see Appendix \ref{sec:high-dim}.\label{rem:minimax} 
\end{rem}

The first general result for the convergence rate of the VarianceSplit algorithm without the variance decay assumption was
obtained by \citet{klusowski2024large}. In a similar vein, \citet{cattaneo2024convergence}
studied the convergence rates of the \textit{oblique} CART and obtained
a $1/k$ upper bound for MSE in the general setting (Theorem 1 therein)
as well as an $r^{k}$ upper bound under further assumptions such
as node sizes (Theorem 4 therein). The unconditional rate guarantee
of $1/k$ appears significantly slower than the exponential decay
rates under variance decay assumptions. (A continuous version of)
Theorem 4.2 of \citet{klusowski2024large} states that if the associated
  $\{M_{k}\}_{k\geq0}$ is constructed from the VarianceSplit algorithm,
then for each $k\geq1$, 
\begin{align}
\E[(Y-M_{k})^{2}]\leq\inf_{g\in\G}\Big(\E[(Y-g(\bX))^{2}]+\frac{\n{g}_{\mathrm{TV}}^{2}}{k+3}\Big).\label{eq:klu result}
\end{align}
Let us compare the rates between \eqref{eq:klu result} and \eqref{eq:d-dim rate}.
Consider $Y=g(\bX)$ with $g\in\G$ and $\ee>0$. The following table
summarizes the convergence rates of the VarianceSplit and MinimaxSplit
algorithms. 
Note that Theorem \ref{thm:minimax exp decay 2} exhibits the curse
of dimensionality, which cannot be circumvented in the current setting
to the best of our knowledge \citep{cattaneo2022pointwise,tan2022cautionary}. 
\begin{center}
\begin{tabular}{ccc}
\toprule 
Splitting rule  & Risk decay guarantee  & Max level required to guarantee MSE$\,\leq\ee$\tabularnewline
\midrule
\midrule 
VarianceSplit \eqref{eq:variance tree def}  & \eqref{eq:klu result}  & $O(1/\ee)$ \tabularnewline
\midrule 
MinimaxSplit \eqref{eq:minimaxpt}  & N/A   & does not exist in general for $d\geq2$ \tabularnewline
\midrule 
Cyclic MinimaxSplit \eqref{eq:minimax split point}  & Theorem \ref{thm:minimax exp decay 2}  & $O(d\log(1/\ee))$ \tabularnewline
\bottomrule
\end{tabular}
\par\end{center}

Since the bound \eqref{eq:d-dim rate} is uniform over $\delta,k,d$,
one can further optimize over $\delta$ for fixed $k,d$. For example,
taking $\delta=2^{-\lfloor k/d\rfloor/10}$, the right-hand side of
\eqref{eq:d-dim rate} then has the more explicit (but cruder) upper
bound 
\begin{align*}
\inf_{g\in\G}\Big(5\E[(Y-g(\bX))^{2}]+2^{1-\lfloor k/d\rfloor/2}\n{g}_{\mathrm{TV}}^{2}\Big).
\end{align*}

\begin{rem}
\label{rem:better bound} In the case where $\n{g}_{\mathrm{TV}}$
is much larger than the range $\Delta g:=\sup g-\inf g$, \eqref{eq:d-dim rate}
can be further improved to 
\begin{align}
\begin{split}\E[(Y-M_{k})^{2}] & \leq\inf_{g\in\G}\bigg((1+\delta)\,\E[(Y-g(\bX))^{2}]+2^{1/3}(1+\delta^{-1})2^{-2\lfloor k/d\rfloor/3}\n{g}_{\mathrm{TV}}^{2/3}\E[(Y-g(\bX))^{2}]^{2/3}\\
 & \hspace{4cm}+2^{-2/3}(1+\delta^{-1})2^{-2\lfloor k/d\rfloor/3}\n{g}_{\mathrm{TV}}^{2/3}(\Delta g)^{4/3}\bigg).
\end{split}
\label{eq:better bound}
\end{align}
Theorems \ref{thm:atomic} and \ref{thm:oracle} below enjoy the same
specialized bounds. 
\end{rem}

In the remainder of this section, we explain the ideas underlying
the proof of Theorem \ref{thm:minimax exp decay 2}, in the special
case where $d=1$ and $Y=g(X)$ for some $g\in\G$. In this case,
\eqref{eq:d-dim rate} reduces to 
\begin{align}
\E[(Y-M_{k})^{2}]\leq L2^{-2k/3}\n{g}_{\mathrm{TV}}^{2}\label{eq:a}
\end{align}
for some $L>0$, which we verify next. 

We employ the following \textit{binary subtree representation} to
control $\E[(M_{k}-M_{k+1})^{2}]$. Let $(\tilde{V},\tilde{E})$ be
a binary tree with root $\emptyset$. We call a subtree of a binary
tree a \textit{binary subtree}. Then, given an at-most-binary sequence
of nested partitions $\{\pi_{k}\}_{k\geq0}$, there exists a subtree
$(V,E)$ of $(\tilde{V},\tilde{E})$ such that each vertex $v\in V_{k}$
can be identified as a set $A\in\pi_{k}$, where $V_{k}$ is the set
of all vertices in $V$ with the graph distance from the root equal
to $k$.\footnote{The tree may not be a binary tree due to possible unsplittable sets.}
With each edge $e=(A_{k},A_{k+1})\in E$ where $A_{k}\in\pi_{k}$
and $A_{k+1}\in\pi_{k+1}$, we may associate a coefficient $p_{e}:=\p(\bX\in A_{k+1})\in[0,1]$.
With each vertex $v=A_{k}$, we may associate a location $\ell_{v}:=\E[Y\mid \bX\in A_{k}]$.
See Figure \ref{fig:1} for an illustration. It follows that $M_{k}$
is supported on the discrete points $\{\ell_{v}\}_{v\in V_{k}}$ and
furthermore, 
\begin{align}
\E[(M_{k}-M_{k+1})^{2}]=\sum_{\substack{v_{k}\in V_{k},\,v_{k+1}\in V_{k+1}\\
v_{k}\sim v_{k+1}
}
}p_{(v_{k},v_{k+1})}(\ell_{v_{k}}-\ell_{v_{k+1}})^{2},\label{eq:binary rep}
\end{align}
where for two vertices $v,w$, we write $v\sim w$ if they are connected. 
Let $\mathcal{F}_{k}$ denote the $\sigma$-algebra generated by the collection $\bone_{\{\bX\in A\}},\,A\in\pi_{k}$. 
The nodewise predictor $M_{k}=\mathbb{E}[Y\mid\mathcal{F}_{k}]$
is a Doob martingale in $L^{2}$, and its risk satisfies 
\begin{align}
    \mathbb{E} \bigl[(Y-M_{k})^{2}\bigr]=\sum_{i=k}^\infty\mathbb{E} \bigl[(M_{i+1}-M_{i})^{2}\bigr]\quad\text{ and }\quad\mathbb{E} \bigl[(M_{k}-M_{k+1})^{2}\bigr]=\sum_{A\in\pi_{k}}\sum_{\substack{A'\in\pi_{k+1}\\A'\subset A}}\mathbb{P}(A')\bigl(\mu_{A'}-\mu_{A}\bigr)^{2},\label{eq:k,k+1}
\end{align}
where we have used that $Y$ is $\sigma(X)$-measurable. 
In other words, convergence rates reduce to controlling the one-step increments
created by each split. The VarianceSplit rule provides no uniform control:
under ECP, a child can have arbitrarily small mass,
so large within-node fluctuations can be confined to microscopic slivers,
and the increment need not contract at a fixed rate. Our MinimaxSplit
instead minimizes $\max\{\mathbb{P}(A_{L})\Var(Y \mid A_{L}),\,\mathbb{P}(A_{R})\Var(Y \mid A_{R})\}$,
ensuring that at every refinement the worst child risk decreases by
a constant factor. Summed along the tree, this yields an exponential
contraction of $\mathbb{E}[(M_{k+1}-M_{k})^{2}]$ and hence of $\mathbb{E}[(Y-M_{k})^{2}]$.
The \emph{cyclic} schedule strengthens this guarantee by forcing each
coordinate to be refined regularly, preventing all contraction from
being spent on a single axis and avoiding symmetry-breaking failures; see Section \ref{sec:ASP}.

\begin{figure}[t!]
\centering \includegraphics[width=0.76\linewidth]{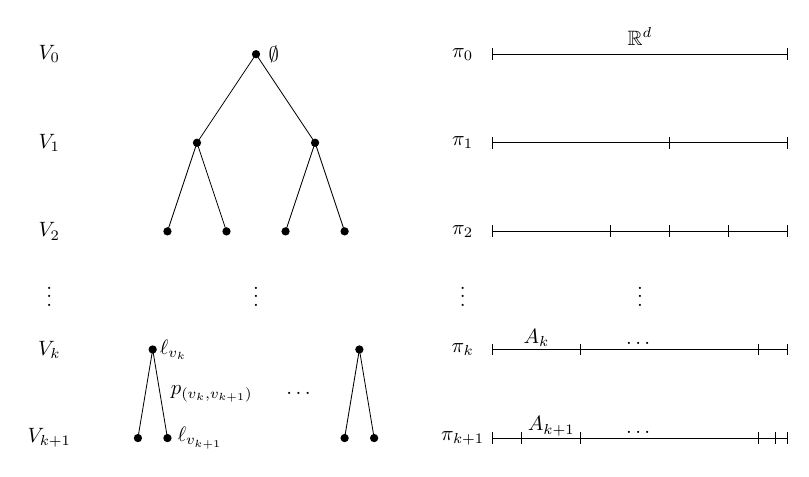}
\caption{Visualizing the nested sequence of partitions $\{\pi_{k}\}_{k\protect\geq0}$
and its associated binary subtree representation. 
}
\label{fig:1} 
\end{figure}

To put the above intuition into play, we apply Hölder's inequality and \eqref{eq:2-k} to obtain 
\begin{align}
\begin{split}\E[(M_{k}-M_{k+1})^{2}] & =\sum_{\substack{v_{k}\in V_{k},\,v_{k+1}\in V_{k+1}\\
v_{k}\sim v_{k+1}
}
}p_{(v_{k},v_{k+1})}(\ell_{v_{k}}-\ell_{v_{k+1}})^{2}\\
 & \leq\bigg(\sum_{\substack{v_{k}\in V_{k},\,v_{k+1}\in V_{k+1}\\
v_{k}\sim v_{k+1}
}
}p_{(v_{k},v_{k+1})}\bigg)^{1/3}\\
 & \hspace{1cm}\times\bigg(\max_{\substack{v_{k}\in V_{k},\,v_{k+1}\in V_{k+1}\\
v_{k}\sim v_{k+1}
}
}p_{(v_{k},v_{k+1})}(\ell_{v_{k}}-\ell_{v_{k+1}})^{2}\sum_{\substack{v_{k}\in V_{k},\,v_{k+1}\in V_{k+1}\\
v_{k}\sim v_{k+1}
}
}|\ell_{v_{k}}-\ell_{v_{k+1}}|\bigg)^{2/3}\\
 & \leq\var(Y)^{2/3}2^{-2k/3}\bigg(\sum_{\substack{v_{k}\in V_{k},\,v_{k+1}\in V_{k+1}\\
v_{k}\sim v_{k+1}
}
}|\ell_{v_{k}}-\ell_{v_{k+1}}|\bigg)^{2/3}.
\end{split}
\label{eq:.A}
\end{align}
Since each split introduces a difference term $|\ell_{v_{k}}-\ell_{v_{k+1}}|$,
which is bounded by the values of $Y$ conditioned on a partition
in the covariate space, we have 
\begin{align}
\sum_{\substack{v_{k}\in V_{k},\,v_{k+1}\in V_{k+1}\\
v_{k}\sim v_{k+1}
}
}|\ell_{v_{k}}-\ell_{v_{k+1}}|\leq\sum_{A\in\pi_{k+1}}\Big(\sup_{A}g-\inf_{A}g\Big)\leq\n{g}_{\mathrm{TV}}.\label{eq:.B}
\end{align}
On the other hand, $\var(Y)=\var(g(X))\leq\n{g}_{\mathrm{TV}}^{2}$.
Inserting \eqref{eq:.B} into \eqref{eq:.A} and using \eqref{eq:k,k+1}, we obtain $\E[(Y-M_{k})^{2}]\leq L2^{-2k/3}\n{g}_{\mathrm{TV}}^{2}$ for some $L>0$.

Our approach is fundamentally different from \citet{klusowski2024large}
in the following sense. On the one hand, the approach of \citet{klusowski2024large} relies on aggregating one-step lower bounds on the impurity gain (or the decay of the risk). On the other hand, recall from \eqref{eq:totalvar2} that the
total risk decreases by the amount 
\begin{align}
\left(\mathbb{E}[Y\mid A_{L}]-\mathbb{E}[Y\mid A_{R}]\right)^{2}(1-\mathbb{P}(A_{L}))\mathbb{P}(A_{L})\label{eq:gain}
\end{align}
after $A$ splits into $A_{L}\cup A_{R}$. The intuition is that although
the gain \eqref{eq:gain} can be small at a certain step, it reveals
information on the underlying joint distribution that guarantees that
either it has already gone through significant risk decay in previous
steps, or it is soon happening, or the total variation $\n{g}_{\mathrm{TV}}$
must be large. Let us illustrate the above intuition by a quick example. 
\begin{example}
Consider the case where $d=1$, $X\lawis\mathrm{Unif}(0,1)$, and $Y=\sin(2\pi2^{p}X)$.
Figure \ref{fig:sine} below plots the remaining risk $\E[(Y-M_{k})^{2}],~1\leq k\leq K$
with power parameters $p=1,2,3,4$ and maximum tree depth $K=12$.
Due to the symmetry of the sine function, the first few splits of
the MinimaxSplit do not reduce the remaining risk, since the splitting
points are points of symmetry. However, as soon as the splitting breaks
through the symmetries, the remaining risk decays exponentially
to zero. The VarianceSplit algorithm makes mild progress at every
step, but is quickly surpassed by MinimaxSplit once the latter breaks
through the symmetries. The reason is that due to ECP, the splitting
points are near the boundary of the nodes, leading to unbalanced nodes.
In other words, while the MinimaxSplit algorithm may not reduce the
risk as steadily as VarianceSplit, the steps with little risk reduction
often implicitly overcome certain hard constraints inherent to the
regression problem (such as the symmetries of the sine function),
paving the way for a much faster risk decay in later stages. Note
that the occurrence of minimal risk reduction from the first few steps
of MinimaxSplit does not contradict the risk bound from Theorem \ref{thm:minimax exp decay 2},
because the upper bound already incorporates the total variation of
the signal function.

\begin{figure}[h!]
\centering \includegraphics[width=0.7\linewidth]{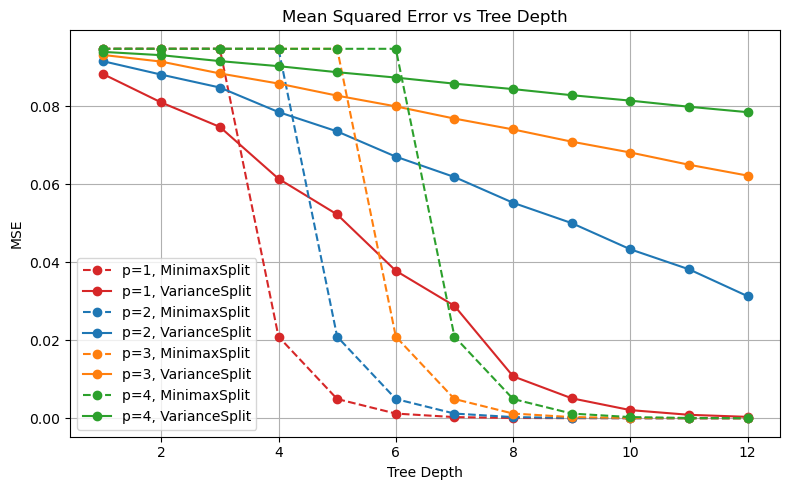} \caption{Comparison of the mean squared errors for the MinimaxSplit and VarianceSplit
regimes, in the setting where $X\lawis\mathrm{Unif}(0,1)$ and $Y=\sin(2\pi2^{p}X)$
with $p=1,2,3,4$ and maximum tree depth $12$. }
\label{fig:sine} 
\end{figure}
\end{example}

\subsection{Empirical risk bound for cyclic MinimaxSplit}

\label{sec:empirical}

So far, we have discussed the cases where the joint distribution $(\bX,Y)$ is marginally atomless, which does not include the class of empirical measures. 
We now focus on the regression setting
and derive finite-sample performance guarantees. Suppose that under
the original law $\p_{*}$, $\bX_{*}$ is marginally atomless and
$Y_{*}=g_{*}(\bX_{*})+\ee,$ where $g_{*}(\bX_{*}):=\E[Y_{*}\mid\bX_{*}]$ and 
the error $\ee=Y_{*}-g_{*}(\bX_{*})$ is sub-Gaussian, i.e., for some
$\sigma>0$, $\p_{*}(|\ee|\geq u)\leq2\exp(-{u^{2}}/({2\sigma^{2}})),~ u\geq0.$
Also, we consider the empirical law of $(\bX,Y)$ based on $N$ samples
from $\p_{*}$.

A limitation of Theorem \ref{thm:minimax exp decay 2}
is that it applies only if the covariate $\bX$ is marginally atomless
(for \eqref{eq:1/2var} to hold). Therefore, we need the following
result 
that deals with measures whose marginals may contain atoms. For a random variable $Y$, denote by $\supp Y$ the support of $Y$. 

When the marginal law of $X$ has atoms, the continuity argument used in the proof of the exact worst-child halving property is no longer available. Nevertheless, if every coordinate atom has mass at most $1/N$, then the failure of exact balancing can be quantified by an additive perturbation of order $N^{-1}$. The next theorem shows that the population bound from the atomless case survives, up to a discrete-resolution correction term.
\begin{thm}
\label{thm:atomic} Suppose that $\bX$ is purely atomic and has a finite support, and for some
$N>0$, 
\begin{align}
\max_{j\in[d]}\,\max_{u\in\R}\p(X_{j}=u)\leq\frac{1}{N}.\label{eq:1/N}
\end{align}
Let $\Delta Y:=\sup\supp Y-\inf\supp Y$, and let the associated   $\{M_{k}\}_{k\geq0}$
be constructed from the cyclic MinimaxSplit algorithm. Then uniformly
for any $\delta>0$ and $k\geq0$, 
\begin{align}
\begin{split}\E[(Y-M_{k})^{2}] & \leq(1+\delta^{-1})2^{-2\lfloor k/d\rfloor/3}\frac{2^{k+2}(\Delta Y)^{2}}{3N}+\inf_{g\in\G}\bigg(\Big((1+\delta)+\frac{2(1+\delta)(1+\delta^{-1})}{3\cdot2^{2\lfloor k/d\rfloor/3}}\Big)\,\E[(Y-g(\bX))^{2}]\\
 & \hspace{6cm}+(1+\delta^{-1})\bigg(\frac{2}{3}+\Big(\frac{1+\delta^{-1}}{4}\Big)^{2/3}\bigg)2^{-2\lfloor k/d\rfloor/3}\n{g}_{\mathrm{TV}}^{2}\bigg).
\end{split}
\label{eq:atomic}
\end{align}
\end{thm}

Comparing \eqref{eq:atomic} and \eqref{eq:d-dim rate}, we see that there is an additional compensation term of $(1+\delta^{-1})2^{-2\lfloor k/d\rfloor/3}\frac{2^{k+2}(\Delta Y)^{2}}{3N}$ for the atomic case. This arises because the functions $\varphi_L$ and $\varphi_R$ are no longer continuous in Lemma \ref{lemma:continuous}, meaning that the maximum risk decay may not always achieve the $1/2$ rate; see Remark \ref{rem:noncty}.


The previous results control the approximation error of the population partition generated by cyclic MinimaxSplit. To obtain a finite-sample oracle inequality, one combines that deterministic approximation bound with the same truncation and empirical-process steps used in the CART analysis of Klusowski and Tian (2024). The effect of the MinimaxSplit rule enters only through the improved approximation term.
\begin{thm}
\label{thm:oracle} Assume that $\bX_{*}$ is marginally atomless.
We have for $\delta\geq2^{-2\lfloor k/d\rfloor/3}$, 
\begin{align*}
\E\big[\n{g_{k}-g_{*}}^{2}\big] & \leq\frac{C2^{k}(\log N)^{2}\log(Nd)}{N}+2\inf_{g\in\G}\bigg(\Big((1+\delta)+\frac{2(1+\delta)(1+\delta^{-1})}{3\cdot2^{2\lfloor k/d\rfloor/3}}\Big)\n{g-g_{*}}^{2}\\
 & \hspace{5.5cm}+(1+\delta^{-1})\bigg(\frac{2}{3}+\Big(\frac{1+\delta^{-1}}{4}\Big)^{2/3}\bigg)2^{-2\lfloor k/d\rfloor/3}\n{g}_{\mathrm{TV}}^{2}\bigg),
\end{align*}
where $C>0$ is a constant depending only on $\n{g_{*}}_{\infty}$
and $\sigma^{2}$. 
\end{thm}

For a fixed sample size $N$, if we assume that $g_{*}\in\G$ and
$k=\lceil(3d\log_{2}N)/(3d+2)\rceil$ is a multiple of $d$, Theorem \ref{thm:oracle}
has the further consequence that (under the same assumptions) 
\begin{align}
\E\big[\n{g_{k}-g_{*}}^{2}\big]\leq CN^{-\frac{2}{3d+2}}\big(\n{g_{*}}^2_{\mathrm{TV}}+(\log N)^{2}\log(Nd)\big).\label{eq:oracle2}
\end{align}
The proof of Theorem \ref{thm:oracle} follows essentially the same
path as Theorem 4.3 for CART of \citet{klusowski2024large} while
replacing Theorem 4.2 therein by our Theorem \ref{thm:atomic}.

As explained in Remark \ref{rem:minimax}, Theorem \ref{thm:oracle} does not hold without extra assumptions if the cyclic MinimaxSplit algorithm is replaced by the MinimaxSplit algorithm. On another note, in \textit{high-dimensional} settings, the MSE decay rate $2^{-2\lfloor k/d\rfloor/3}$ deteriorates under the cyclic schedule. Nevertheless, we provide numerical support in Appendix \ref{sec:high-dim} that the MinimaxSplit algorithm in \textit{high dimensions} still often outperforms other baseline models such as scikit-learn and VarianceSplit.

In addition to the performance and convergence rate guarantees, we briefly comment on the computational efficiency of MinimaxSplit compared to VarianceSplit. Typically, the time complexity consists of sorting and identifying the split dimension and location \citep[Section 5]{louppe2014understanding}. The time complexity for sorting does not depend on the splitting criteria. In terms of solving for the split dimension and location, our MinimaxSplit algorithm \eqref{eq:minimaxpt} (or its cyclic variant \eqref{eq:minimax split point}) is advantageous over VarianceSplit \eqref{eq:variance tree def}, because of the monotonicity of the risk as a function of the split location, thus reducing the time complexity from linear in the node size to logarithmic in the node size.

\subsection{Why cyclic MinimaxSplit outperforms MinimaxSplit}

\label{sec:ASP}

The key reason why the above risk bounds and the oracle inequality are only derived for the
\textit{cyclic} MinimaxSplit algorithm is that there is an \textit{anti-symmetry-breaking
preference} (ASBP) for MinimaxSplit, which we briefly explain in this
section. The gist is that if the underlying model possesses different symmetry conditions across different dimensions, the MinimaxSplit algorithm is likely to always split over the
asymmetric dimensions, thus leaving the symmetric dimensions untouched.
This leads to an undesirable convergence rate, or even inconsistency
in the large-sample regime. We start with a simple example in the
marginally atomless case. 
\begin{example}
\label{ex:ASBP}

Consider $\bX=(X_{1},X_{2})$ uniformly distributed on $[-1,1]^{2}$
and $Y=X_{1}+|X_{2}|$. Clearly, $Y=f(\bX)$ for some $f\in\G$ with
$\n{f}_{\mathrm{TV}}=4$. Recall \eqref{eq:minimaxpt} and consider
a MinimaxSplit step applied to a splittable set $A=[\alpha,\beta)\times[-1,1]$
where $-1\leq\alpha<\beta\leq1$. Along the second dimension $x_{2}$,
the split location of MinimaxSplit in \eqref{eq:minimaxpt}
is always given by $x_{2}=0$, due to the symmetry of the law of $(\bX,Y)$
along $x_{2}=0$ on the set $A$. However, there is no risk decay
given by this split, again due to the above symmetry. On the other
hand, along the first dimension $x_{1}$, the split location of MinimaxSplit in \eqref{eq:minimaxpt} is given by $x_{1}=(\alpha+\beta)/2$
and the risk decay is strictly positive. It follows that the $\argmin$ in \eqref{eq:minimaxpt}
always satisfies $j=1$. By induction, if we
apply the MinimaxSplit algorithm to the joint distribution $(\bX,Y)$,
the split will always be along the first dimension $x_{1}$.

Therefore, if the associated $\{M_{k}\}_{k\geq0}$ is constructed
from the MinimaxSplit algorithm, each $M_{k}$ can be written as a
function of $X_{1}$ only, say $M_{k}=h_{k}(X_{1})$. But then by
the independence of $X_{1}$ and $X_{2}$, for any $k$, 
\begin{align}
\begin{split}\E[(Y-M_{k})^{2}] & =\E[(X_{1}-h_{k}(X_{1})+|X_{2}|)^{2}]\\
 & =\E[(X_{1}-h_{k}(X_{1}))^{2}]+\E[|X_{2}|^{2}]+2\E[(X_{1}-h_{k}(X_{1}))]\E[|X_{2}|]\\
 & \geq\var(X_{1}-h_{k}(X_{1}))+\var(|X_{2}|)+(\E[X_{1}-h_{k}(X_{1})]+\E[|X_{2}|])^{2}\\
 & \geq\var(|X_{2}|)=\frac{1}{12},
\end{split}
\label{eq:1/12}
\end{align}
and thus the MSE does not even decay to zero, meaning that the counterpart of 
\eqref{eq:d-dim rate} cannot hold. 
\end{example}

Example \ref{ex:ASBP} does not involve samples, but it is intuitively
true that ASBP should still hold if one considers empirical measures
of $(\bX,Y)$ with the goal of estimating $f$. This can be made rigorous
using a Donsker class argument, which we discuss in Appendix \ref{sec:Additional-Example-for}.
Example \ref{ex:ASBP} also generalizes to higher dimensions, such as using the
function $Y=f(\bX)$ where $f(x_{1},\dots,x_{d})=x_{1}+\dots+x_{d-1}+|x_{d}|$. 

This ASBP phenomenon has an important consequence for the inadmissibility
of classic random forests in the context of MinimaxSplit. We follow
the setting of Section 7 of \citet{klusowski2024large}, which is
a slight adaptation of Breiman's classic random forest \citep{breiman2001random}. 
A random forest consists of a collection of independent subsampled
trees, where at each node, the split dimensions are optimized among
a random pre-selected set $\mathcal{S}\subseteq[d]$. The set $\mathcal{S}$
is independently randomly generated with a fixed size $m_{\mathrm{try}}\in[d]$ for
each node. The notation $m_{\mathrm{try}}$ will be kept throughout this paper. Consider a random forest under the MinimaxSplit regime,
with $m_{\mathrm{try}}\geq2$ and $g(\bx)=x_{1}+\dots+x_{d-1}+|x_{d}|$ on $[-1,1]^{d}$.
Then each $\mathcal{S}$ must include a dimension in $[d-1]$, along
which the split always reduces more risk than the $d$-th dimension.
This implies that the $d$-th dimension is never reached and the risk
is strictly bounded away from zero for any $k$, using an argument similar to \eqref{eq:1/12}. Therefore, the consistency of the random forest
under the MinimaxSplit regime is not always guaranteed in the large-sample
regime (even if the signal is noiseless), unless we pick $m_{\mathrm{try}}=1$. In
the next section, we analyze the case $m_{\mathrm{try}}=1$, which we call a \textit{random-dimension random forest}, and establish the corresponding
consistency for the MinimaxSplit random forest. The empirical performance will be presented in Section \ref{sec:app denoise}.

\section{Ensemble models}

\label{sec:RF}

\subsection{MinimaxSplit random forests}

Although a single decision tree can capture patterns in data through
hierarchical partitioning, it often suffers from high variance in
predictions, which makes it sensitive to small changes in the training
dataset. 
To address these limitations, an ensemble approach can be utilized
to combine multiple decision trees, leading to improved stability
and predictive accuracy. In this section, we extend the MinimaxSplit
and cyclic MinimaxSplit methods to an ensemble context. 

The ensemble method we consider here is Breiman's classic random forest \citep{breiman2001random},
which aggregates multiple decision trees to improve the precision
and robustness of the prediction. Each tree is grown to a fixed depth
$k$ according to some splitting rule, and the split dimension of each node is optimized among independent selections of random subsets of $[d]$ with a fixed cardinality $m_{\mathrm{try}}$. 
Each tree
in the ensemble is trained on a bootstrap sample, yet our theoretical results do not consider bootstrap for simplicity. Predictions are made by averaging the output of individual trees,
effectively reducing the overall variance of the model (see Algorithm
1 in Appendix \ref{sec:Algorithms}). We call the ensemble the \textit{MinimaxSplit (resp.~VarianceSplit) random forest}, if the base learners follow the MinimaxSplit (resp.~VarianceSplit) algorithm.\footnote{Note that the cyclic MinimaxSplit case does not apply (unless $m_{\mathrm{try}}=1$) due to the lack of an optimization procedure over the dimensions.}



\subsection{Empirical risk bound for MinimaxSplit random-dimension random forests}

\label{sec:RF0}

Our discussion in Section \ref{sec:ASP} indicates that the random
forest approach for the  MinimaxSplit algorithms is better
implemented with $m_{\mathrm{try}}=1$ to guarantee consistency. In other words, we randomize the dimension
at \textit{every} node. As a consequence, applying the MinimaxSplit and the cyclic MinimaxSplit rules yield the same ensemble in distribution. We call the resulting ensemble the \textit{MinimaxSplit random-dimension random forest}.

Recall the regression setting from Section \ref{sec:empirical} and
consider a random forest consisting of trees in which split dimensions
at each node are i.i.d.~sampled from $[d]$, independently of the
data, similar to ExtraTrees \citep{geurts2006extremely} but the randomness lies in choosing the dimension instead of the split location. Fix a depth
$k$ and the number of samples $N$. Denote by $n$ the number of
trees in the forest. Let $\Sigma$ denote the law of the splitting
dimensions and $\E_{n}$ denote the expectation with respect to the
empirical measure on $n$ samples from $\Sigma$. Let $g_k$ be the estimator constructed at depth $k$ from the MinimaxSplit random-dimension random forest. Our goal is
then to bound the $L^{2}$ error $\E\big[\n{\E_{n}[g_{k}]-g_{*}}^{2}\big],$ where
the outer expectation is taken in terms of randomness both from the
empirical samples from $(\bX_{*},Y_{*})$ and from the empirical measure
$\E_{n}$. We will provide empirical evidence of the effectiveness
of such a randomized construction in Section \ref{sec:app denoise}.
\begin{thm}
\label{thm:oracle forest} Assume the same regression setting as in Theorem
\ref{thm:oracle}, and the above random forest definition. We have
\begin{align}
\begin{split}\E\big[\n{\E_{n}[g_{k}]-g_{*}}^{2}\big]\leq & \frac{C2^{k}(\log N)^{2}\log(Nd)+2^{k}(\log N)dn^{-1/3}}{N}\\
 & +2\inf_{g\in\G}\bigg(2\n{g-g_{*}}^{2}+\frac{8}{3}\Big(\n{g-g_{*}}^{2}+\n{g}_{\mathrm{TV}}^{2}\Big)\Big(e^{-\frac{k}{6d}}+\frac{d}{n^{1/3}}\Big)\bigg),
\end{split}
\label{eq:oracle forest}
\end{align}
where $C>0$ is a constant depending only on $\n{g_{*}}_{\infty}$
and $\sigma^{2}$. In particular, if $N/(2^{k}(\log N)^{2}\log(Nd))\to\infty$,
$k/d\to\infty$, and $n/d^{3}\to\infty$, then the MinimaxSplit random-dimension random forest is
consistent. 
\end{thm}

Let us compare Theorem \ref{thm:oracle forest} with the empirical
risk bound for a single tree given by Theorem \ref{thm:oracle}. First,
the exponent $-k/(6d)$ now does not contain a floor function but
has an asymptotically slower rate if $k/d\to\infty$. This is due
to the non-cyclicity of the (random) selection of dimensions. Second,
the formula \eqref{eq:oracle forest} does not involve $\delta$ for
simplicity, although the same argument works for more general choices
of $\delta>0$. 



The restriction $m_{\mathrm{try}}=1$ is theoretically motivated: for $m_{\mathrm{try}}\ge 2$, Example \ref{ex:ASBP} shows that the obstruction to extending Theorem \ref{thm:minimax exp decay 2} from cyclic MinimaxSplit to the unrestricted
MinimaxSplit rule is not the local minimax contraction itself, but rather a \emph{coordinate-starvation}
phenomenon: a signal-bearing coordinate may never be selected along some root-to-leaf paths. 
The next condition rules this out at the level of the \emph{selected} split coordinate.

Let $\Xi$ denote all randomization used by the tree construction (candidate-set sampling and
tie-breaking), and let $\{\pi_k^\Xi\}_{k\ge 0}$ be the associated random partition sequence.
For a splittable cell $A\in \pi_k^\Xi$, write $J^\Xi(A)\in[d]$ for the coordinate along which $A$
is split.

\begin{definition}[Positive visibility on $S$]
\label{def:positive-visibility}
Fix $S\subseteq [d]$ and $\rho\in(0,1]$. We say that the randomized MinimaxSplit tree has
\emph{positive visibility $\rho$ on $S$} if for every $k\ge 0$, every splittable cell
$A\in \pi_k^\Xi$, and every $j\in S$,
\[
\p\!\left(J^\Xi(A)=j \,\middle|\, \pi_k^\Xi\right)\ge \rho
\qquad \text{a.s.}
\]
\end{definition}

In the following, for $S\subseteq[d]$, we consider the additive function class 
\begin{align*}
\G_S:=\Big\{g(\bx):=\sum_{j\in S}g_j(x_j)\Big\},
\end{align*}
where $\bx=(x_{1},\dots,x_{d})$.

\begin{thm} 
\label{thm:random-minimax-visible}
Suppose that $\bX$ is marginally atomless. Let $\{M_k^\Xi\}_{k\ge 0}$ be the predictor sequence
associated with a randomized MinimaxSplit tree, and assume that the tree has positive visibility
$\rho\in(0,1]$ on $S\subseteq [d]$ in the sense of Definition~\ref{def:positive-visibility}. Then uniformly
for any $\delta>0$ and $k\ge 0$,
\begin{align}
\label{eq:visible-random-rate}
\begin{split}
    \E[(Y-M_k^\Xi)^2]
\le
\inf_{g\in \mathcal G_S}
\Bigg(
\left(
(1+\delta)
+
\frac{2(1+\delta)(1+\delta^{-1})}{3}e^{-\rho k/3}
\right)
\E[(Y-g(\bX))^2]+   \\  
(1+\delta^{-1})
\left(
\frac13+\left(\frac{1+\delta^{-1}}{4}\right)^{2/3}
\right)
e^{-\rho k/3}
\|g\|_{\mathrm{TV}}^2
\Bigg),
\end{split}
\end{align}
where the expectation is taken over both $(\bX,Y)$ and the tree randomization $\Xi$.
\end{thm}

In particular, if $\rho$ is bounded below by a positive multiple of $1/s$, where $s:=|S|$, then the
approximation error decays at rate $\exp(-ck/s)$ for a universal constant $c>0$. This reveals that the stochastic visibility scale $1/\rho$ for randomized 
trees with $m_{\mathrm{try}}\ge 2$ is essential to ensure risk bounds. This kind of assumption shares the spirit of mild design assumptions used by \citep{rovckova2019theory}. The same observation also shows that randomized MinimaxSplit \textit{adapts to exact sparsity} under the positive visibility condition. Indeed, as the approximation error decays at rate $\exp(-ck/s)$, the effective dimension is $s$ rather than the ambient dimension $d$.

\section{Minimax decision trees for classification}

\label{sec:Minimax-classify}

Following the C4.5 generalization in \citet{klusowski2024large},
we also provide corresponding results when using MinimaxSplit
for entropy loss. Throughout this section, we consider binary response $Y\in\{\pm 1\}$ with covariates $\bX\in\mathbb{R}^{d}$ (so we consider $(\bX,Y)$ as a joint distribution),
and $\eta(\bx)\coloneqq\mathbb{P}(Y=1\mid \bX=\bx)$ denotes the regression
function. For any measurable cell $A\subset\mathbb{R}^{d}$ we write
\[
\eta_{A}\;\coloneqq\;\mathbb{P}(Y=1\mid \bX\in A),\qquad w(A)\;\coloneqq\;\mathbb{P}(\bX\in A),
\]
and we measure impurity on $A$ with the Shannon entropy 
\begin{align}
    H(Y\mid \bX\in A)\;=\;h(\eta_{A})\;\coloneqq\;-\eta_{A}\log\eta_{A}-(1-\eta_{A})\log(1-\eta_{A}).\label{eq:Hh}
\end{align}
Axis-aligned splits refine a cell $A$ into left and right children
$A_{L}=\{\bx\in A:x_{j}< z\}$ and $A_{R}=A\setminus A_{L}$ for some $j\in[d]$ and $z\in\R$. Standard
C4.5 chooses $(j,z)$ to minimize the weighted sum $w(A_{L})h(\eta_{A_{L}})+w(A_{R})h(\eta_{A_{R}})$.
In the \textit{MinimaxSplit} version, we instead choose $(j,z)$ to minimize $\max\{w(A_{L})h(\eta_{A_{L}}),\,w(A_{R})h(\eta_{A_{R}})\}$,
which explicitly prevents one child from remaining both large and
highly impure. As in the regression setting, when splitting at level $k$, the \textit{cyclic MinimaxSplit} variant fixes
the coordinate at level $k$ to $j_{k}=1+( k\ (\mathrm{mod}\ d))$ and optimizes only over the threshold $z$. 

Given a split rule, we obtain a nested sequence of axis-aligned partitions
$\{\pi_{k}\}_{k\ge0}$ by applying a single binary split per node per level.
For any $\omega$, we let $A_{k}(\omega)\in\pi_{k}$ be the unique
cell containing $\bX(\omega)$. The node statistic we track is the log-odds
of the average class-probability in that cell, 
\begin{align}
    M_{k}(\omega)\;\coloneqq\;\log\Big(\frac{\eta_{A_{k}(\omega)}}{1-\eta_{A_{k}(\omega)}}\Big).\label{eq:Mk-1}
\end{align}
If a cell is pure, we allow $M_k=\pm\infty$ in the natural way; then
$\log(1+e^{-Y M_k})=0$ on that cell. 
Let $g_k(\bx):=\log(\eta_{A_k}/(1-\eta_{A_k}))$ for $\bx\in A_k,~A_k\in\pi_k$. 
The predictor associated with depth $k$ is the plug-in classifier
$2\bone_{\{g_{k}(\bx)\geq 0\}}-1$ (or equivalently, $2\bone_{\{\eta_{A_{k}}\ge{1}/{2}\}}-1$ for the unique $A_k\in\pi_k$ that contains $\bx$) and the canonical
convex surrogate is the conditional cross-entropy on the partition.

With this preparation, the results in this section follow the same
logical arc as in the regression setting. Theorem~\ref{thm:minimax exp decay 2-1}
treats the marginally atomless case and shows that, under the cyclic
MinimaxSplit schedule, the cross entropy contracts exponentially in $\lfloor k/d\rfloor$. 
Theorem~\ref{thm:atomic-1} covers the complementary purely
atomic setting. Here, we need a mild anti-concentration assumption
on coordinate masses; the theorem then delivers an explicit upper bound for $\E[\log(1+e^{-YM_{k}})]$ that reflects the discrete
resolution $N$ of the atoms. Finally, Theorem~\ref{thm:oracle-1} translates these bounds into an oracle inequality for the excess
classification risk of $g_{k}$ under the 0-1 loss via standard calibration
between cross-entropy and misclassification.

\begin{thm}
\label{thm:minimax exp decay 2-1} Suppose that $\bX$ is marginally
atomless. Let the associated process $\{M_{k}\}_{k\geq0}$ be constructed
from \eqref{eq:Mk-1} using the cyclic MinimaxSplit algorithm. Then uniformly for any $k\geq0$,
\begin{align}
\begin{split}\E[\log(1+e^{-YM_{k}})] & \leq\inf_{g\in\G}\Big(\E[\log(1+e^{-Yg(\bX)})]+\sqrt{\frac{\log2}{4}}\,2^{-\lfloor k/d\rfloor/2}\n{g}_{\mathrm{TV}}^{1/2}\Big).\end{split}
\label{eq:d-dim rate-1}
\end{align}
\end{thm}

\begin{thm}
\label{thm:atomic-1} Suppose that $\bX$ is purely atomic and has a finite support, and for
some $N>0$, 
\begin{align}
\max_{j\in[d]}\,\max_{u\in\R}\p(X_{j}=u)\leq\frac{1}{N}.\label{eq:1/N-1}
\end{align}
Let the associated process
$\{M_{k}\}_{k\geq0}$ be constructed from \eqref{eq:Mk-1} using the cyclic MinimaxSplit
algorithm. Then uniformly for any $k\geq0$, 
\begin{align}
\begin{split}\E[\log(1+e^{-YM_{k}})] & \leq2^{-\lfloor k/d\rfloor/2}\frac{2^{k}\log (eN)}{N}+\inf_{g\in\G}\Big(\E[\log(1+e^{-Yg(\bX)})]+\sqrt{\frac{\log2}{4}}\,2^{-\lfloor k/d\rfloor/2}(\n{g}_{\mathrm{TV}}^{1/2}+\frac{\n{g}_{\mathrm{TV}}}{2})\Big).\end{split}
\label{eq:atomic-1}
\end{align}
\end{thm}

The proofs of Theorems \ref{thm:minimax exp decay 2-1} and \ref{thm:atomic-1} follow essentially the same principles as in the regression setting, except that now $\{M_k\}_{k\geq 0}$ no longer forms a martingale. The key is to apply the monotonicity of the map $z\mapsto\p(\bX\in A,\,X_{j}<z)\,h(\p(Y=1\mid\bX\in A,\,X_{j}<z))$, a fact that mirrors Lemma \ref{lemma:continuous} and will be justified in Lemma \ref{lemma:continuous-1} below.

In the next result, we state the corresponding oracle inequality. Following \citet{klusowski2024large}, for a map $g$ we consider the misclassification risk 
$\mathrm{Err}(g):= \p(Y\neq 2\bone_{\{g(\bX)\geq 0\}}-1).$ 
Further, we set $g_*(\bx):=\log(\p(Y=1\mid\bX=\bx)/\p(Y=-1\mid\bX=\bx))$. Recall that $N$ is the number of samples, $d$ is the underlying dimension, and $g_k$ is the output of the cyclic MinimaxSplit construction with depth $k$.

\begin{thm}
\label{thm:oracle-1} Assume that $\bX_{*}$ is marginally atomless.
We have that under the cyclic MinimaxSplit construction,
\begin{align}
\begin{split}\E[\mathrm{Err}(g_{k})]-\mathrm{Err}(g_{*})\leq L\Big( & \frac{2^{k}(\log N)^{2}\log(Nd)}{N}\Big)^{1/4}\\
 & +\inf_{g\in\G}\Big(\n{g-g_{*}}^{2}+\Big({\frac{\log2}{4}}\Big)^{1/4}\,2^{1-\lfloor k/d\rfloor/4}(\n{g}_{\mathrm{TV}}^{1/4}+\frac{\n{g}_{\mathrm{TV}}^{1/2}}{\sqrt{2}})\Big),
\end{split}
\label{eq:C4.5 oracle}
\end{align}
where $L>0$ is a positive universal constant. 
\end{thm}

\begin{rem}\label{rem:c4.5}
The bound \eqref{eq:C4.5 oracle} can be relaxed to 
\[
\E[\mathrm{Err}(g_{k})]-\mathrm{Err}(g_{*})\leq L\Big(\frac{2^{k}(\log N)^{2}\log(Nd)}{N}\Big)^{1/4}+\inf_{g\in\G}\Big(\n{g-g_{*}}^{2}+L2^{-\lfloor k/d\rfloor/4}(\n{g}_{\mathrm{TV}}+1)\Big)
\]
for some large constant $L>0$, using the rough inequality $x\leq \min\{x^2+1,x^4+1\}$. 
Compared to the oracle inequality of \citet{klusowski2024large},
we see that we have achieved exponential convergence rates and removed
the factor $\n{g}_{\infty}$. 
\end{rem}

We provide additional classification experiments using MinimaxSplit in Appendix \ref{sec:appendix-classification-synthetic}.



\section{Numerical experiments and applications}

\label{sec:minimax numerics}

In this section, we provide experiments that verify the advantages
of MinimaxSplit and cyclic MinimaxSplit methods in low-dimensional
domains, as well as integrated ensemble methods applied to approximating
a real-valued function and denoising images. Performance metrics are
defined in Appendix \ref{subsec:Performance-Metrics}, and further
numerical experiments (including those in high dimensions) are provided
in Appendix \ref{app:further numerics}.

\subsection{Avoiding end-cut preference for stable cuts}

\label{sec:ecp numerics}

To show that the ECP is remedied in MinimaxSplit, we consider a special case of the
location regression model in \citep{cattaneo2022pointwise}, similar to the discussion in Example \ref{ex:location regression}. In our experiment, we generate independent
replicates with $X\lawis\mathrm{Unif}(0,1)$ and $Y$
following the standard normal or Student's $t$ distributions (pure noise), sort by $X$,
and scan all possible splits in the experiments in Figure \ref{fig:min_child_prop}.
We also provide two baselines: Uniform random cutting picks a split
point entirely at random within the current predictor range, ensuring that there is
no systematic bias, yet ignoring any data-driven signal. Sampling an
observed value similarly randomizes split location among actual samples.
Both serve as benchmarks: they eliminate ECP, 
but do not offer an impurity-guided improvement mechanism.

For each replicate, we record ${\min(n_{L},n_{R})}/{n}$, the relative size of the smaller child, and we choose either VarianceSplit (the standard CART impurity reduction) or MinimaxSplit. As the tail of the noise becomes heavier, VarianceSplit becomes
increasingly prone to end-cuts, isolating tiny child nodes almost
every time under Student's t with one degree of freedom. In contrast, the
output of MinimaxSplit remains more tightly concentrated near perfectly
balanced halves across all three distributions, especially for the light-tailed noise cases, thus preventing 
small-node failures. Random uniform and random observed--point cuts
likewise avoid pathological end-cuts but do so at the cost of entirely
unguided splits. Thus, if one requires a data‐adaptive binary split
that resists extreme outliers without surrendering balance, the MinimaxSplit
criterion clearly outperforms VarianceSplit (or CART) and random baselines.

\begin{figure}[h!]
\centering 
 \includegraphics[width=0.32\linewidth]{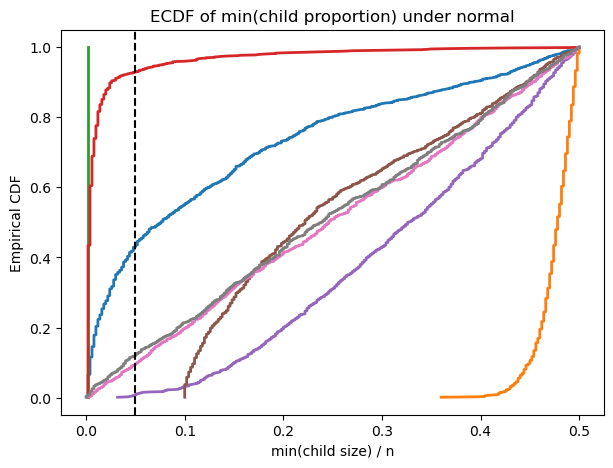}
\includegraphics[width=0.32\linewidth]{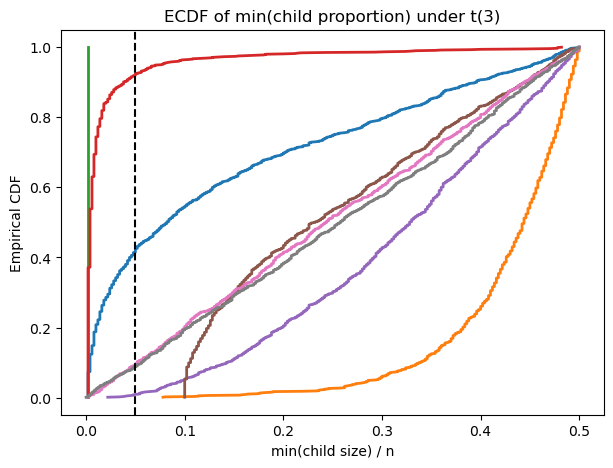}
\includegraphics[width=0.32\linewidth]{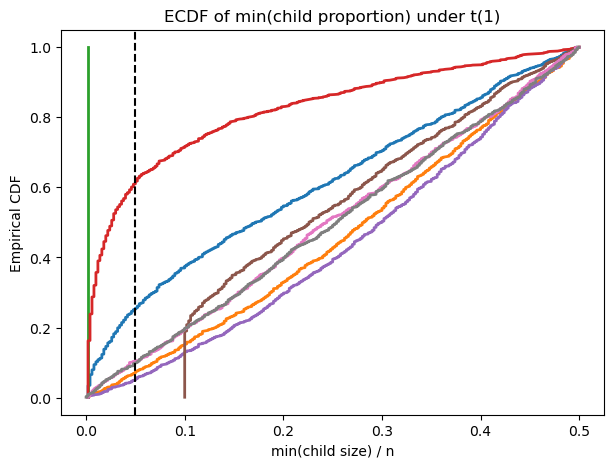}
\includegraphics[width=0.9\linewidth]{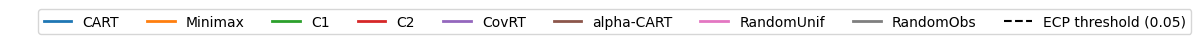}
\caption{Empirical CDFs of the smaller child proportion $\min(n_{L},n_{R})/n$ for sample size $n=500$, based on $1000$ replicates when splitting
pure-noise data generated by standard normal (left), $t(3)$ (middle),
and $t(1)$ (right) distributions. We also use dashed lines to denote the threshold 
0.05.}
\label{fig:min_child_prop} 
\end{figure}

\begin{figure}[h!]
\centering \includegraphics[width=0.48\linewidth]{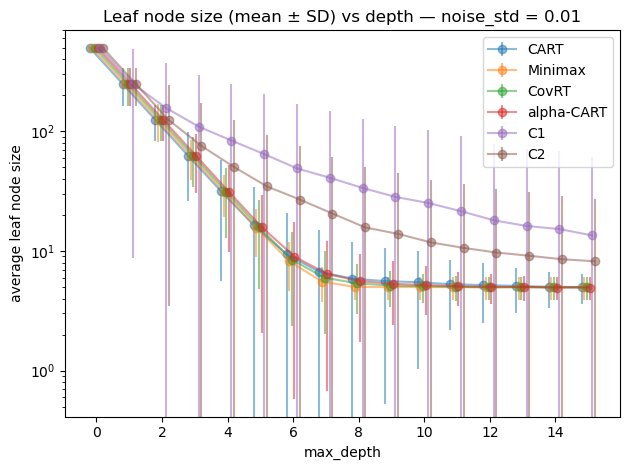}
\includegraphics[width=0.48\linewidth]{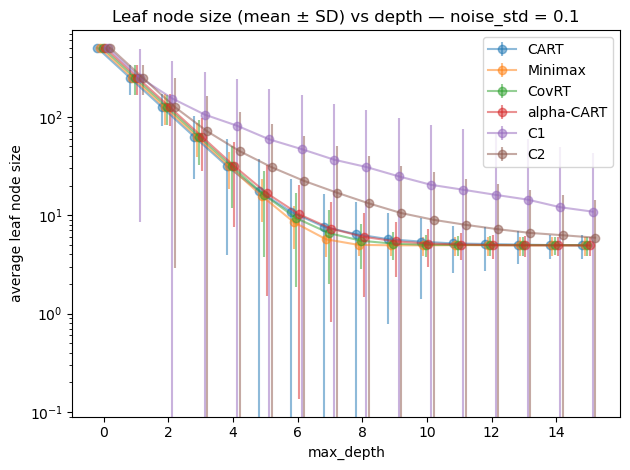} \caption{Average leaf size (mean number of samples per leaf) versus maximum
tree depth for six splitting criteria (VarianceSplit (CART), MinimaxSplit, C1, C2 \citep{buja2001data}, CovRT \citep{zhang_ma_2026_covrt}, and alpha-CART \cite{wager2015adaptive}) at two noise levels. Error bars show one standard deviation across repeated
fits. The vertical axis is on a log scale to expose imbalance: small
and collapsing leaf means with low variability (as for MinimaxSplit and,
to a lesser extent, CART) indicate balanced partitioning, whereas
the slower decay and large spread for C1/C2 reflect persistent ECP that peels off tiny fragments and leaves dominant branches.
The signal function is given by \eqref{eq:signal f}.  Increasing the centered Gaussian noise from $\sigma=0.01$ to $\sigma=0.1$
slightly attenuates rates but does not erase the structural contrast.}
\label{fig:leaf_size_depth} 
\end{figure}

End-cut preference propagates imbalance down the tree: a tiny ``peeled-off’’
child at one split creates a cascade of uneven leaves. To quantify
this, in the next experiment, for each fully grown tree we compute 
\begin{align*}
    &\text{average leaf node size}=\frac{1}{L}\sum_{j=1}^{L}n_{j},\\
    &\text{standard deviation of leaf node size}=\sqrt{\frac{1}{L}\sum_{j=1}^{L}(n_{j}-\text{avg of leaf node size})^{2}},
\end{align*}
where $L$ is the number of leaves and $n_{j}$ is the size of leaf
$j$. Small average leaf node sizes and collapsed error bars signal
balanced partitions; large means with huge variance indicate persistent
end-cuts leaving one dominant branch and tiny fragments. Our main methodological contribution, the MinimaxSplit algorithm, can
be used with ensembles and is applicable to real data. It is also
related in spirit to the C1 and C2 criteria of \citet{buja2001data}, 
which emphasize one-sided purity and can better identify extreme subsets
than traditional CART. Figure \ref{fig:leaf_size_depth} records the
leaf distributions as the maximum depth of the tree grows, with $X\lawis\mathrm{Unif}(0,1)$ and signal
\begin{align}
    f(x)=\begin{cases}
\sin(x), & 0\leq x<\frac{1}{3};\\
-2x, & \frac{1}{3}\le x<\frac{2}{3};\\
0, & \text{otherwise}.
\end{cases}\label{eq:signal f}
\end{align}
MinimaxSplit quickly reduces average leaf size and variability, yielding
balanced leaves. VarianceSplit (CART), C1, and C2 shrink mean leaf size slowly and
retain a large spread: they repeatedly cut tiny end pieces while leaving
a large branch, showing ECP.

\subsection{Fixed-horizon EEG amplitude regression}

We now examine whether the theoretical advantages of MinimaxSplit and
cyclic MinimaxSplit carry over to a non-parametric time-series regression
problem built from the Bonn electroencephalography (EEG) corpus \citep{andrzejak2001}.\footnote{\sloppy as explained at \url{https://www.mathworks.com/help/wavelet/ug/time-frequency-convolutional-network-for-eeg-data-classification.html}}
Each EEG record is a single-channel segment of length $T\approx23.6$
seconds sampled at $173.61$ Hz. 
We formulate a scalar regression task: given
time $t\in[0,T]$ within a fixed horizon $T$, predict the standardized
EEG amplitude $y(t)$. This time-only design isolates the splitting
behavior along a single coordinate, precisely the regime in which
ECP is the most consequential for greedy impurity
splits and where our minimax approach is expected to stabilize partitions.

Our preprocessing follows two principles. First, we standardize the amplitudes
within each segment to zero mean and unit variance. This removes irrelevant
scale without obscuring the short-range temporal variability that
the tree must fit. Second, we optionally downsample by an integer
factor to control computational budget while preserving the temporal
ordering. Both steps are label-free and therefore introduce no leakage.
For evaluation, we fix a horizon $T=20\,$s, draw train-test splits
inside $[0,T]$ by a random holdout, and
keep the split fixed across methods and depths so that any performance
difference reflects only the splitting rule. These choices echo our
theoretical emphasis on how the split criterion shapes the geometry
of the partition.

\begin{figure}[t!]
\centering

\includegraphics[width=0.46\textwidth]{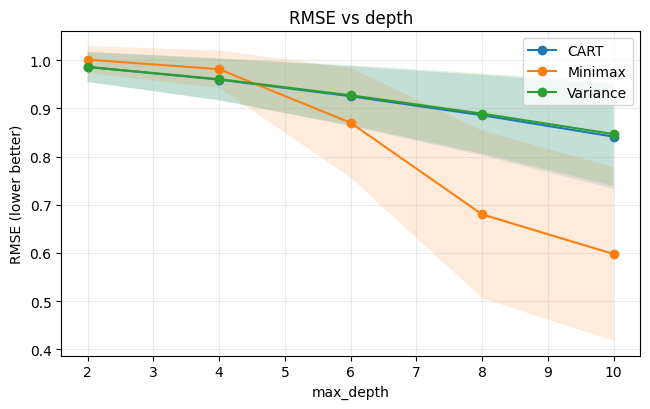}\includegraphics[width=0.46\textwidth]{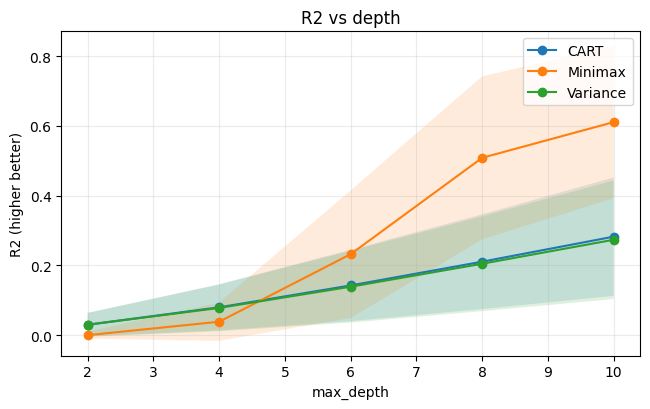}

\includegraphics[width=0.95\textwidth]{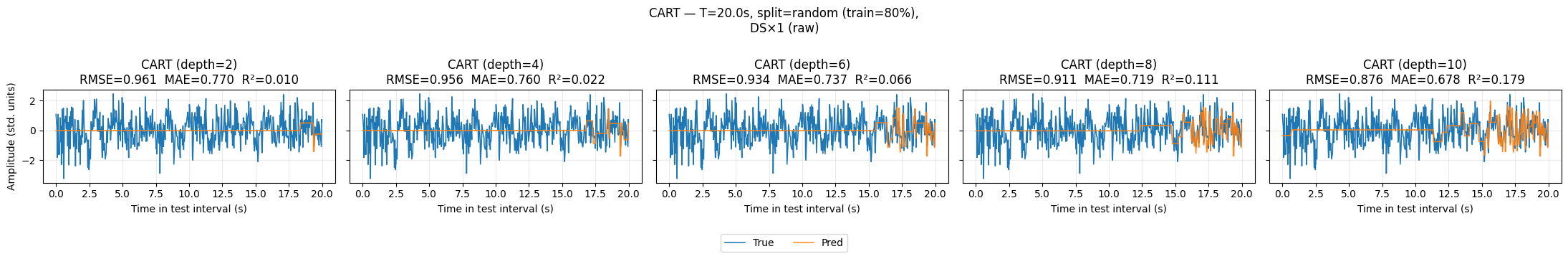}

\includegraphics[width=0.95\textwidth]{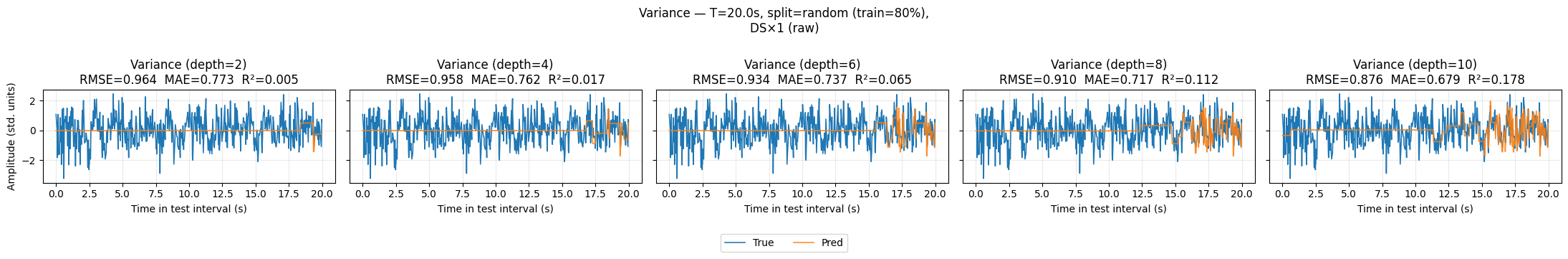}

\includegraphics[width=0.95\textwidth]{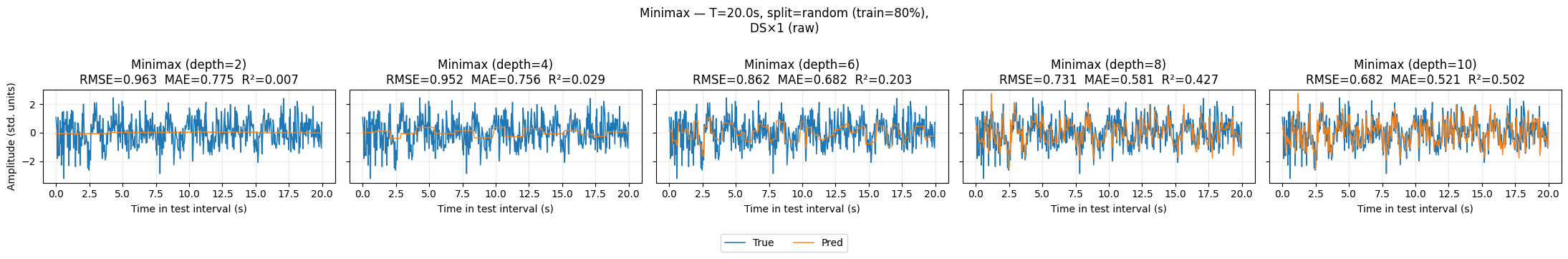}

\caption{\label{fig:EEG-compare}Fixed-horizon EEG amplitude regression with
trees. The top row summarizes performance across Bonn EEG segments
when predicting standardized amplitude $y(t)$ from time only over
a $T=20\LyXThinSpace $s window (random 80/20 split within the window;
identical splits reused for all methods; no downsampling). Curves
show mean RMSE (left; lower is better) and mean $R^{2}$ (right; higher
is better) versus tree depth, with shaded bands denoting $\pm$standard
deviation across segments. The mosaics below plot, for one representative
segment, the true (blue) and predicted (orange) test-interval traces for
each method at depths $k\in\{2,4,6,8,10\}$ along with the reported RMSE/MAE/$R^{2}$.}
\end{figure}

In Figure \ref{fig:EEG-compare}, we compare three trees with identical
hyperparameters (maximum split size $(=2)$ and stopping rules kept fixed):
standard CART using squared-error reduction (with pruning), its variant VarianceSplit, and our
MinimaxSplit which minimizes the worst child variance. 
The experimental protocol is intentionally minimal so that the fitted
functions reveal the partition geometry. Input features consist only
of time $t$ and targets are the standardized amplitudes
$y(t)$. We train each tree on the training subset within $[0,T]$
and report test RMSE, MAE, and the coefficient of determination $R^{2}$ on the holdout portion, aggregating
results across all available EEG segments. Because the same split
is reused across methods, depth-wise comparisons are matched at the
sample level. We also visualize the predicted curves against the ground
truth on a representative segment to expose the qualitative effect
of each split rule.

The results align with the theoretical expectations. 
As the tree depth increases, CART and the sum-of-variances
baselines increasingly display end-cut behavior, producing piecewise constant
fits that flatten toward the edges of the interval and degrade extrapolation
between local extrema. MinimaxSplit, by contrast, maintains near-balanced
children throughout the path, which yields visibly tighter fits to
the oscillatory structure of the signal and systematically lower errors. In addition, MinimaxSplit reduces error and raises $R^{2}$ markedly as depth
increases, while CART and the VarianceSplit baseline improve more
slowly. 
The trend is most pronounced in the random holdout regime where interpolation
dominates; here, the additional leaves available to deeper trees are
useful only if the split rule does not collapse sample sizes, and
the MinimaxSplit criterion preserves effective sample size by construction.
This observation matches our finding that MinimaxSplit concentrates
the smaller-child proportion away from zero and one across noise laws,
thereby eliminating pathological small-node failures observed under
impurity-based CART.

\subsection{Image denoising with ensembles}

\label{sec:app denoise}

\begin{figure}[t!]
\centering

\includegraphics[width=0.95\textwidth]{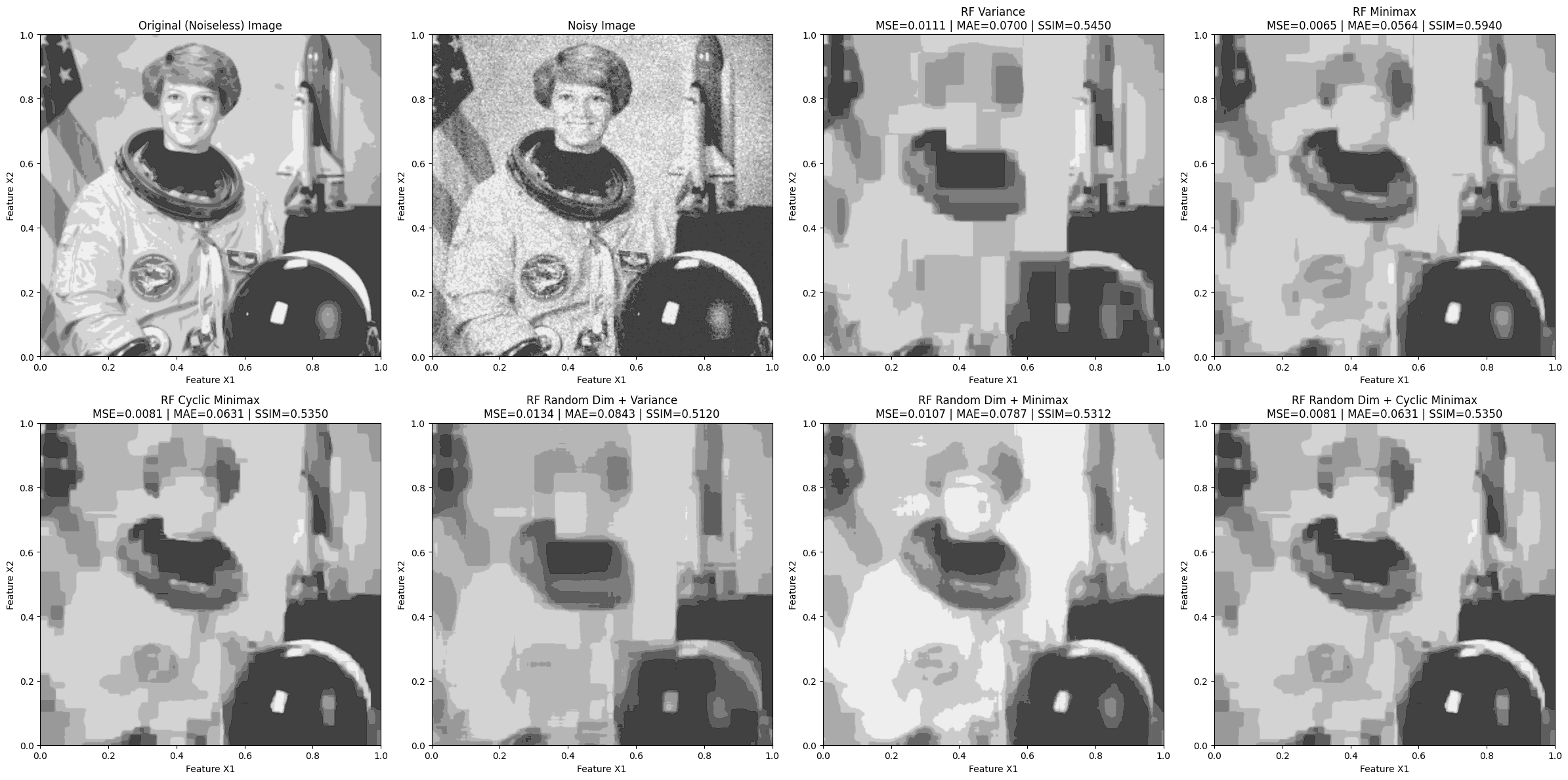}

\caption{\label{fig:Astr_forest}Comparative analysis of various random forest (depth
$K=10$, 50 estimators) approaches for image denoising using different
splitting strategies. The experiment compares six random forest configurations
on a noisy astronaut image: original noiseless image; noisy
input; three  
greedy splitting methods ($m_{\mathrm{try}}=d$) using
consistent splitting criteria across all weak learners (trees) --- 
VarianceSplit, MinimaxSplit, and  Cyclic MinimaxSplit; three  random-dimension methods
where feature selection is randomized at each node (i.e., $m_{\mathrm{try}}=1$) while location
selection follows the three different criteria, as described in Section
\ref{sec:RF0}. }
\end{figure}

\begin{figure}[t!]
\centering

\includegraphics[width=0.95\textwidth]{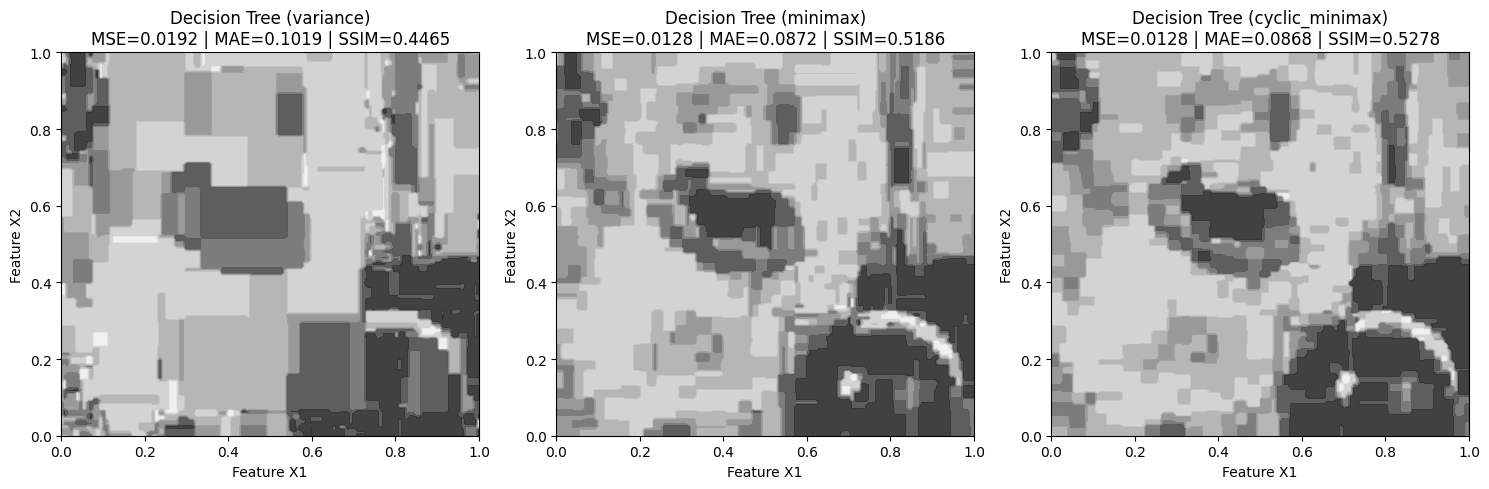}

\caption{\label{fig:Astr_singletree}Comparison of decision tree regression
methods (depth $K=10$) for image reconstruction using different
splitting criteria --- VarianceSplit, MinimaxSplit, and cyclic MinimaxSplit. Each method was trained on noisy astronaut
image data with spatial coordinates as features and pixel intensities
as targets. }
\label{fig:single tree }
\end{figure}

Another notable experiment involves applying decision trees to denoise images \citep{luo2024efficient}. We apply the different tree
variants to predict pixel values based on their locations, effectively
treating this as a regression problem. The experiments shown in Figures
\ref{fig:Astr_forest} and \ref{fig:Astr_singletree} compare decision
tree and random forest regression methods for denoising the classic Astronaut \citep{van2014scikit}
grayscale image, which has been preprocessed to a $256\times256$ image. 

The goal is to recover a clean grayscale image $f:[0,1]^{2} \to [0,1]$
from noisy observations $Y=f(\bX)+\varepsilon$ at pixel centers $\bX=(X_{1},X_{2})$
laid out on a $256\times256$ grid (equipped with the uniform distribution). We normalize intensities to $[0,1]$
and add i.i.d.\ Gaussian noise $\varepsilon\lawis\mathcal{N}(0,\,0.1^{2})$.
Each pixel becomes one regression sample, and we fit axis-aligned
trees that approximate the unknown $f$ as a piecewise constant function
on a dyadic partition, which is precisely the regime covered by our
analysis of axis-aligned trees and their ensembles.

We compare six random forest variants built from our splitting rules:
VarianceSplit, MinimaxSplit, and cyclic MinimaxSplit, each with or
without random feature subsampling at every node ($m_{\mathrm{try}}=1$ feature
per split). All forests consist of $50$ trees, each grown
to depth $K=10$; trees are bootstrapped and predictions are averaged.

Figure~\ref{fig:Astr_forest} shows qualitative reconstructions for
the six forests. The VarianceSplit forest leaves noticeable grain
in flat regions and blurs edges at depth~$10$.
In contrast, both Minimax ensembles suppress speckle while preserving
high-contrast features such as facial contours and texture edges.
Further, across all metrics,
the MinimaxSplit forests outperform the VarianceSplit forest, with the largest gains
in SSIM, indicating better structural fidelity. 
The improvements
are coherent with Theorem~\ref{thm:oracle forest}:  the worst-child control given by the MinimaxSplit rule  prevents high-variance
children from persisting across levels. The single-tree performance is shown in Figure \ref{fig:single tree }.

The experiments support two messages from the theory. First, enforcing
a worst-child improvement at each split translates into visibly sharper
reconstructions and uniformly better pixelwise metrics. Second, randomizing
the split dimension ($m_{\mathrm{try}}=1$) is not an arbitrary choice and ensures the consistency of Minimax forests and practical
stability.

\section{Conclusion}

In this study, we reexamine the well-known phenomenon of ECP, which is associated with traditional CART decision trees \cite{breiman1984classification}. To remedy this undesirable behavior for single decision trees \cite{ishwaran2010consistency}, we introduce a novel decision tree splitting strategy,
the MinimaxSplit algorithm, along with its multivariate variant, the
cyclic MinimaxSplit algorithm. Unlike traditional VarianceSplit methods,
which aim to reduce overall variance in the child nodes, our MinimaxSplit
algorithm seeks to minimize the maximum variance within the split
partitions, thereby reducing the risk of overly biased partitions.
The cyclic MinimaxSplit algorithm further ensures that each dimension
is used in a balanced manner throughout the tree construction process,
avoiding dominance by a subset of features. We do not claim that this
new method could outperform the classical CART-like splitting under
all circumstances, yet we point out that our construction avoids ECP
and attains a better convergence rate guarantee as the tree grows deeper. The (non-cyclic) MinimaxSplit construction features the anti-symmetry-breaking preference---consistency is not guaranteed in certain symmetric cases,  which can, however, be alleviated by using ensemble methods. 

A key theoretical methodological development of this study is that
the cyclic MinimaxSplit algorithm achieves an exponential MSE (or logistic loss) decay
rate given sufficient data samples, without requiring additional
variance decay assumptions in regression and classification settings (Theorems \ref{thm:atomic} and \ref{thm:atomic-1}). Furthermore,
we derive empirical risk bounds for this method, establishing its
robustness in different settings (Theorems \ref{thm:oracle} and \ref{thm:oracle-1}). Using the same powerful techniques used for trees, we prove novel
results on the convergence rates of univariate partition-based martingale
approximations as a by-product (see Appendix \ref{sec:m}),
which are of independent interest.

In addition to single-tree regression analysis, we explore ensemble
learning approaches, where we combine decision trees constructed
using MinimaxSplit, cyclic MinimaxSplit, and VarianceSplit methods
within a random forest framework. We also  demonstrate a
hybrid approach 
with different splitting techniques at different layers when dealing with non-uniform data distributions
(Section \ref{sec:2}). These findings suggest that the MinimaxSplit
approach can provide more stable and adaptive decision trees compared
to traditional methods, especially in low-dimensional settings. In future work, we hope to introduce changes in the cut family and search scope \citep{ishwaran2026super} to enhance its high-dimensional behavior. 

\section*{Acknowledgment}

The authors thank Ruodu Wang for his valuable input. We provide our
reproducible code at \url{github.com/hrluo}. 

 \bibliographystyle{plainnat}
\bibliography{main}

@article{klusowski2024large,
  title={Large scale prediction with decision trees},
  author={Klusowski, Jason M and Tian, Peter M},
  journal={Journal of the American Statistical Association},
  volume={119},
  number={545},
  pages={525--537},
  year={2024},
  publisher={Taylor \& Francis}
}

@article{zhang2023exact,
  title={On the existence of powerful p-values and e-values for composite hypotheses},
  author={Zhang, Zhenyuan and Ramdas, Aaditya and Wang, Ruodu},
 journal={The Annals of Statistics},
  volume={52},
  number={5},
  pages={2241--2267},
  year={2024}
}

@book{hall2014martingale,
  title={Martingale Limit Theory and Its Application},
  author={Hall, Peter and Heyde, Christopher C},
  year={2014},
  publisher={Academic press}
}

@article{wang2008asymptotic,
  title={Asymptotic expansions for inverse moments of binomial and negative binomial},
  author={Wang, Tianming and others},
  journal={Statistics \& Probability Letters},
  volume={78},
  number={17},
  pages={3018--3022},
  year={2008},
  publisher={Elsevier}
}

@article{ishwaran2010consistency,
  title={Consistency of random survival forests},
  author={Ishwaran, Hemant and Kogalur, Udaya B},
  journal={Statistics \& Probability Letters},
  volume={80},
  number={13-14},
  pages={1056--1064},
  year={2010},
  publisher={Elsevier}
}

@article{wager2015adaptive,
  title={Adaptive concentration of regression trees, with application to random forests},
  author={Wager, Stefan and Walther, Guenther},
  journal={arXiv preprint arXiv:1503.06388},
  year={2015}
}

@article{simons1970martingale,
  title={A martingale decomposition theorem},
  author={Simons, Gordon},
  journal={The Annals of Mathematical Statistics},
  volume={41},
  number={3},
  pages={1102--1104},
  year={1970},
  publisher={JSTOR}
}

@article{mazumder2024convergence,
  title={On the convergence of {CART} under sufficient impurity decrease condition},
  author={Mazumder, Rahul and Wang, Haoyue},
  journal={Advances in Neural Information Processing Systems},
  volume={36},
  year={2024}
}

@article{cattaneo2024convergence,
  title={Convergence rates of oblique regression trees for flexible function libraries},
  author={Cattaneo, Matias D and Chandak, Rajita and Klusowski, Jason M},
  journal={The Annals of Statistics},
  volume={52},
  number={2},
  pages={466--490},
  year={2024},
  publisher={Institute of Mathematical Statistics}
}

@article{chi2022asymptotic,
  title={Asymptotic properties of high-dimensional random forests},
  author={Chi, Chien-Ming and Vossler, Patrick and Fan, Yingying and Lv, Jinchi},
  journal={The Annals of Statistics},
  volume={50},
  number={6},
  pages={3415--3438},
  year={2022},
  publisher={Institute of Mathematical Statistics}
}

@book{ramdas2024hypothesis,
  title     = {Hypothesis Testing with E-values},
  author    = {Ramdas, Aaditya and Wang, Ruodu},
  year      = {2025},
  publisher = {Foundations and Trends® in Statistics},
  note      = {Inaugural issue},
}

@book{borovskikh2019martingale,
  title={Martingale Approximation},
  author={Borovskikh, Yu V and Korolyuk, VS},
  year={1997},
  publisher={Walter de Gruyter GmbH \& Co KG}
}

@article{ruschendorf1985wasserstein,
  title={The {W}asserstein distance and approximation theorems},
  author={R{\"u}schendorf, Ludger},
  journal={Probability Theory and Related Fields},
  volume={70},
  number={1},
  pages={117--129},
  year={1985},
  publisher={Springer}
}

@article{zhao2008martingale,
  title={On martingale approximations},
  author={Zhao, Ou and Woodroofe, Michael},
  year={2008},
journal={The Annals of Applied Probability},
  volume={18},
  number={5},
  pages={1831--1847}, 
}

@article{grama2018limit,
  title={Limit theorems for {M}arkov walks conditioned to stay positive under a spectral gap assumption},
  author={Grama, Ion and Lauvergnat, Ronan and Le Page, {\'E}mile},
  journal={The Annals of Probability},
  volume={46},
  number={4},
  pages={1807--1877},
  year={2018},
  publisher={JSTOR}
}

@inproceedings{tan2022cautionary,
  title={A cautionary tale on fitting decision trees to data from additive models: generalization lower bounds},
  author={Tan, Yan Shuo and Agarwal, Abhineet and Yu, Bin},
  booktitle={International Conference on Artificial Intelligence and Statistics},
  pages={9663--9685},
  year={2022},
  organization={PMLR}
}

@article{cattaneo2022pointwise,
  title={On the pointwise behavior of recursive partitioning and its implications for heterogeneous causal effect estimation},
  author={Cattaneo, Matias D and Klusowski, Jason M and Tian, Peter M},
  journal={arXiv preprint arXiv:2211.10805},
  year={2022}
}

@article{ishwaran2015effect,
  title={The effect of splitting on random forests},
  author={Ishwaran, Hemant},
  journal={Machine Learning},
  volume={99},
  pages={75--118},
  year={2015},
  publisher={Springer}
}

@inproceedings{syrgkanis2020estimation,
  title={Estimation and inference with trees and forests in high dimensions},
  author={Syrgkanis, Vasilis and Zampetakis, Manolis},
  booktitle={Conference on Learning Theory},
  pages={3453--3454},
  year={2020},
  organization={PMLR}
}

@book{breiman1984classification,
  title={Classification and Regression Trees},
  author={Breiman, Leo and Friedman, Jerome and Olshen, Richard and Stone, Charles},
  year={1984},
  publisher={Wadsworth, Inc.}
}

@article{wu2004martingale,
  title={Martingale approximations for sums of stationary processes},
  author={Wu, Wei Biao and Woodroofe, Michael},
journal={The Annals of Probability},
  pages={1674--1690},
  year={2004},
volume={32},
number={2}
}

@phdthesis{louppe2014understanding,
  title        = {Understanding Random Forests: From Theory to Practice},
  author       = {Louppe, Gilles},
  year         = {2014},
  school       = {University of Liège},
  address      = {Liège, Belgium},
  note         = {PhD thesis}
}

@article{su2024smooth,
  title={Smooth {S}igmoid {S}urrogate ({SSS}): An Alternative to Greedy Search in Decision Trees},
  author={Su, Xiaogang and Quaye, George Ekow and Wei, Yishu and Kang, Joseph and Liu, Lei and Yang, Qiong and Fan, Juanjuan and Levine, Richard A},
  journal={Mathematics},
  volume={12},
  number={20},
  pages={3190},
  year={2024},
  publisher={MDPI}
}

@article{luo2023sharded,
  title={Sharded {B}ayesian additive regression trees},
  author={Luo, Hengrui and Pratola, Matthew T},
  journal={arXiv preprint arXiv:2306.00361},
  year={2023}
}

@article{liu2023convergence,
  title={Convergence rates of a class of multivariate density estimation methods based on adaptive partitioning},
  author={Liu, Linxi and Li, Dangna and Wong, Wing Hung},
  journal={Journal of Machine Learning Research},
  volume={24},
  number={50},
  pages={1--64},
  year={2023}
}

@article{van2014scikit,
  title={scikit-image: image processing in Python},
  author={Van der Walt, Stefan and Sch{\"o}nberger, Johannes L and Nunez-Iglesias, Juan and Boulogne, Fran{\c{c}}ois and Warner, Joshua D and Yager, Neil and Gouillart, Emmanuelle and Yu, Tony},
  journal={PeerJ},
  volume={2},
  pages={e453},
  year={2014},
  publisher={PeerJ Inc.}
}

@inproceedings{o2005every,
  title={Every decision tree has an influential variable},
  author={O'Donnell, Ryan and Saks, Michael and Schramm, Oded and Servedio, Rocco A},
  booktitle={46th Annual Symposium on Foundations of Computer Science (FOCS'05)},
  pages={31--39},
  year={2005},
  organization={IEEE}
}

@article{blanc2020universal,
  title={Universal guarantees for decision tree induction via a higher-order splitting criterion},
  author={Blanc, Guy and Gupta, Neha and Lange, Jane and Tan, Li-Yang},
  journal={Advances in Neural Information Processing Systems},
  volume={33},
  pages={9475--9484},
  year={2020}
}

@article{loh2014fifty,
  title={Fifty years of classification and regression trees},
  author={Loh, Wei-Yin},
  journal={International Statistical Review},
  volume={82},
  number={3},
  pages={329--348},
  year={2014},
  publisher={Wiley Online Library}
}

@article{gordon1984almost,
  title={Almost surely consistent nonparametric regression from recursive partitioning schemes},
  author={Gordon, Louis and Olshen, Richard A},
  journal={Journal of Multivariate Analysis},
  volume={15},
  number={2},
  pages={147--163},
  year={1984},
  publisher={Elsevier}
}

@article{luo2024efficient,
  title={Efficient Decision Trees for Tensor Regressions},
  author={Luo, Hengrui and Horiguchi, Akira and Ma, Li},
  journal={arXiv preprint arXiv:2408.01926},
  year={2024}
}

@inproceedings{torgo2001study,
  title={A study on end-cut preference in least squares regression trees},
  author={Torgo, Luis},
  booktitle={Portuguese Conference on Artificial Intelligence},
  pages={104--115},
  year={2001},
  organization={Springer}
}

@techreport{morgan1973thaid,
  title={{THAID}, a sequential analysis program for the analysis of nominal scale dependent variables},
  author={Morgan, James N and Messenger, Robert C},
institution={Ann Arbor: Institute for
Social Research, University of Michigan},
  year={1973}
}

@article{breiman2001random,
  title={Random forests},
  author={Breiman, Leo},
  journal={Machine Learning},
 volume={45},
  number={1},
  pages={5--32},
  year={2001},
  publisher={Springer}
}

@article{supp,
  title={Supplementary material to ``{S}tabilizing the Splits through Minimax
Decision Trees''},
  author={Zhang, Zhenyuan and Luo, Hengrui},
  year={2025},
}

@article{main,
  title={{S}tabilizing the Splits through Minimax
Decision Trees},
  author={Zhang, Zhenyuan and Luo, Hengrui},
  year={2025},
}

@mastersthesis{wang2025study,
  title={A Study of End-Cut Preference in Tree-Based Modeling},
  author={Wang, Xiangya},
  year={2025},
  school={The University of Texas at El Paso}
}

@book {doob1953book,
	AUTHOR = {Doob, Joseph L},
	TITLE = {Stochastic Processes},
	PUBLISHER = {John Wiley \& Sons},
	ADDRESS = {New York},
	YEAR = {1953}
}

@article{doob1940regularity,
  title={Regularity properties of certain families of chance variables},
  author={Doob, Joseph L},
  journal={Transactions of the American Mathematical Society},
  volume={47},
  number={3},
  pages={455--486},
  year={1940}
}

@inproceedings{buja2001data,
  title={Data mining criteria for tree-based regression and classification},
  author={Buja, Andreas and Lee, Yung-Seop},
  booktitle={Proceedings of the seventh ACM SIGKDD international conference on Knowledge discovery and data mining},
  pages={27--36},
  year={2001}
}

@article{andrzejak2001,
  title={Indications of nonlinear deterministic and finite-dimensional structures in time series of brain electrical activity: Dependence on recording region and brain state},
  author={Andrzejak, Ralph G and Lehnertz, Klaus and Mormann, Florian and Rieke, Christoph and David, Peter and Elger, Christian E},
  journal={Physical Review E},
  volume={64},
  number={6},
  pages={061907},
  year={2001},
  publisher={APS}
}

@article{zhao2016loss,
  title        = {Loss Functions for Image Restoration with Neural Networks},
  author       = {Zhao, Hang and Gallo, Orazio and Frosio, Iuri and Kautz, Jan},
  journal      = {IEEE Transactions on Computational Imaging},
  volume       = {3},
  number       = {1},
  pages        = {47--57},
  year         = {2017},
}

@inproceedings{jadon2022regression,
  title={A comprehensive survey of regression-based loss functions for time series forecasting},
  author={Jadon, Aryan and Patil, Avinash and Jadon, Shruti},
  booktitle={International Conference on Data Management, Analytics \& Innovation},
  pages={117--147},
  year={2024},
  organization={Springer}
}

@article{geurts2006extremely,
  title={Extremely randomized trees},
  author={Geurts, Pierre and Ernst, Damien and Wehenkel, Louis},
  journal={Machine Learning},
  volume={63},
  number={1},
  pages={3--42},
  year={2006},
  publisher={Springer}
}

@inproceedings{rovckova2019theory,
  title={On theory for {BART}},
  author={Ro{\v{c}}kov{\'a}, Veronika and Saha, Enakshi},
  booktitle={The 22nd international conference on artificial intelligence and statistics},
  pages={2839--2848},
  year={2019},
  organization={PMLR}
}

@article{wang2004image,
  title={Image quality assessment: from error visibility to structural similarity},
  author={Wang, Zhou and Bovik, Alan C and Sheikh, Hamid R and Simoncelli, Eero P},
  journal={IEEE Transactions on Image Processing},
  volume={13},
  number={4},
  pages={600--612},
  year={2004},
  publisher={IEEE}
}

@article{zhang_ma_2026_covrt,
  title   = {Covariance-Driven Regression Trees: Reducing Overfitting in {CART}},
  author  = {Zhang, Likun and Ma, Wei},
  journal = {arXiv preprint arXiv:2601.07281},
  year    = {2026}
}

@article{ishwaran2026super,
  title={Super greedy trees},
  author={Ishwaran, Hemant},
  journal={Artificial Intelligence Review},
  year={2026},
  publisher={Springer}
}

@article{liu2024spatial,
  title={Spatial adaptation by Bayesian unsupervised trees.},
  author={Liu, Linxi and Ma, Li},
  year={2024},
  publisher={Proceedings of Machine Learning Research}
}

\appendix

\section{On partition-based martingale approximations}

\label{sec:m}

In this section, we develop the framework of partition-based martingale
approximations and show that a number of constructions feature exponential convergence
rates. These problems are of independent interest (see the end of Appendix \ref{subsec:Basic-concepts}) and have implications in the regression tree setting. The main takeaway is that:
\begin{itemize}
 \item The techniques developed in this work for analyzing convergence rates of the MinimaxSplit algorithm extend to many other settings, such as convergence rates for martingale approximations;
\item Precise (asymptotic) convergence rates of martingale approximations depend on the initial law, but a uniform exponential rate is always guaranteed under mild conditions; 
   
    \item Applying VarianceSplit to samples $\{(\bX_i,Y_i)\}_{1\leq i\leq n}$ satisfying monotonicity in all $d$ dimensions (meaning that for every $j\in[d]$ and distinct $i,i'\in[n]$, $((\bX_i)_j-(\bX_{i'})_j)(Y_i-Y_{i'})=\sigma_j$ where $\sigma_j\in\{\pm 1\}$ depends only on $j$) does not incur ECP.
\end{itemize}

\subsection{Basic concepts }

\label{subsec:Basic-concepts}

Consider nested partitions $\{\pi_{k}\}_{k\geq0}$ of $\R$ and a
real-valued random variable $U$ with a finite second moment. We use the abbreviation $\sigma(\pi_{k})$ to denote the $\sigma$-algebra
generated by events of the form $\{U\in A\}_{A\in\pi_{k}}$. Define $M_k:=\E[U\mid\sigma(\pi_k)]$. 
If $\Pi:=\{\pi_{k}\}_{k\geq0}$
is a nested sequence of partitions, $\{\sigma(\pi_{k})\}_{k\geq0}$
is a filtration generated by indicator functions of sets in the partitions $\{\pi_{k}\}_{k\geq0}$ \citep{doob1953book}. It follows from the tower property of conditional
expectations that the sequence $\{M_{k}\}_{k\geq0}=\{\E[U\mid\sigma(\pi_{k})]\}_{k\geq0}$
is a martingale (in fact, a Doob martingale \citep{doob1940regularity,doob1953book}).
We call this martingale the \textit{$\Pi$-based martingale approximation}
of the random variable $U$, or in general a \textit{partition-based
martingale approximation} that approximates the random variable $U$ in $L^2$.

Unless $U$ is a constant,
 there are different partition-based martingale approximations depending
on the distribution of $U$. Our goal in this section is to identify
a few partition-based martingale approximations that efficiently approximate
$U$, where the construction algorithm is universal. The efficiency
criterion is given by the decay of the MSE $\E[(U-M_{k})^{2}]$. The MSE is non-increasing in $k$ by the
nested property of $\{\pi_{k}\}_{k\geq0}$ and the total variance
formula. 

Without loss of generality, we give our construction of a partition
of a generic interval $[a,b)\subset\mathbb{R}$, where $a,b\in\R\cup\{\pm\infty\}$.\footnote{Here we slightly abuse notation that $[a,b)=(-\infty,b)$ if $a=-\infty$.}
The sequence of partitions $\Pi$ will be constructed
recursively, where $\pi_{0}=\{\R\}$ and for every $k\geq0$, each
interval $A\in\pi_{k}$ splits into two intervals, by following the
same construction, forming the elements in $\pi_{k+1}$. In the following,
we introduce four distinct splitting rules that define partition-based
martingale approximations. For simplicity, we assume that $U$ is
atomless, so that the endpoints of the intervals do not matter and
that no trivial split occurs. The general case with atoms can be analyzed in a similar way but with more technicality.
\begin{defn}
\label{def:4martingales} Suppose we are given an atomless law of
$U$ and a non-empty interval $I=[a,b)$, where $a,b\in\R\cup\{\pm\infty\}$.

\begin{enumerate}
\item[(i)] Define 
\begin{align}
u_{\text{var}}=\argmin_{u\in I}\big(\p(U\in[a,u))\var(U\mid U\in[a,u))+\p(U\in[u,b))\var(U\mid U\in[u,b))\big).\label{eq:var m def}
\end{align}
If the minimizer is not unique, we pick the largest minimizer. The
\textit{variance} splitting rule (corresponding to the VarianceSplit
algorithm in Section \ref{sec:formulation}) splits $I$ into the
two sets $[a,u_{\text{var}})$ and $[u_{\text{var}},b)$. 
\item[(ii)] Define $u_{\text{Simons}}=\E[U\mid U\in I]$. The \textit{Simons}
splitting rule splits $I$ into the two sets $[a,u_{\text{Simons}})$
and $[u_{\text{Simons}},b)$. 
\item[(iii)] Define 
\[
u_{\text{minimax}}=\argmin_{u\in I}\max\big\{\p(U\in[a,u))\var(U\mid U\in[a,u)),\,\p(U\in[u,b))\var(U\mid U\in[u,b))\big\}.
\]
If the minimizer is not unique, we pick the largest minimizer. The
\textit{minimax} splitting rule (corresponding to the MinimaxSplit
algorithm in Section \ref{sec:formulation}) splits $I$ into the
two sets $[a,u_{\text{minimax}})$ and $[u_{\text{minimax}},b)$. 
\item[(iv)] Define 
\[
u_{\text{median}}=\sup\{u\in I:\p(a\leq U<u)=\p(u\leq U<b)\}.
\]
The \textit{median} splitting rule splits $I$ into two sets $[a,u_{\text{median}})$
and $[u_{\text{median}},b)$. 
\end{enumerate}
In turn, when applying each splitting rule recursively starting from
$\pi_{0}=\{\R\}$, we obtain a nested sequence of partitions $\Pi=\{\pi_{k}\}_{k\geq0}$
depending on the law of $U$. We call the resulting $\Pi$-based martingale
approximation $\{M_{k}\}_{k\geq0}=\{\E[U\mid\sigma(\pi_{k})]\}_{k\geq0}$
the variance (resp.~Simons, minimax, median) martingale (with respect
to $U$). See Figure \ref{fig:Mapprox_ex2} for examples of the four martingale approximations and their convergence rates.

\begin{figure}[t!]
\begin{subfigure}{0.315\textwidth} \includegraphics[width=1\linewidth]{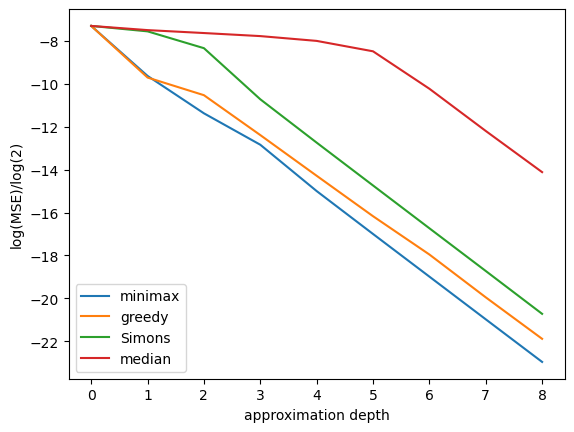}
\caption{}
\label{fig:3a} \end{subfigure} \begin{subfigure}{0.655\textwidth}
\centering \includegraphics[width=0.48\linewidth]{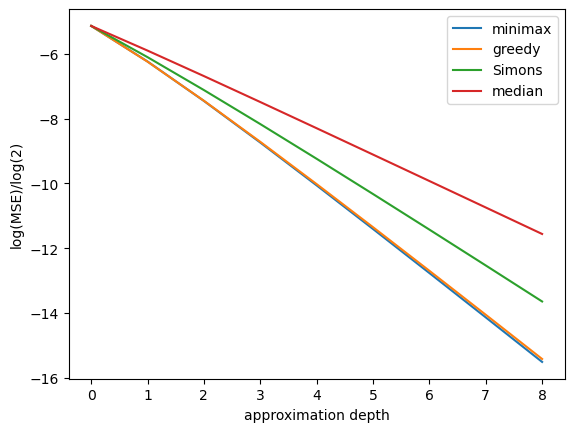}
\includegraphics[width=0.48\linewidth]{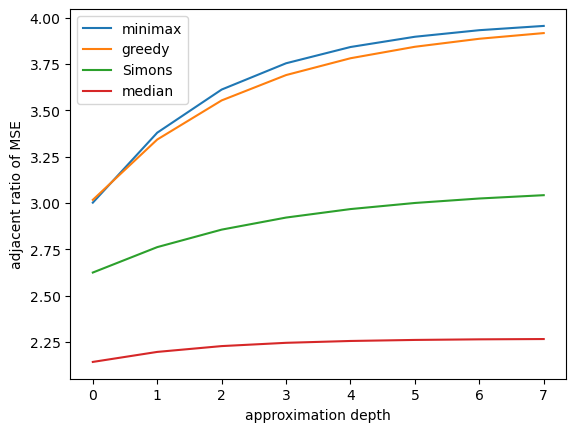} \caption{}
\label{fig:3b} \end{subfigure} \caption{\label{fig:The-left-panel}Plots of the log-MSE, $\log_{2}(\E[(U-M_{k})^{2}])$
versus the approximation depth $k$ for four different methods, where
the density of the law $U$ is given: (a) by $f(u)\propto \bone_{[0,0.9]}(u)+(1+10^{4}(u-0.9))\bone_{[0.9,1]}(u)$ and
(b) by $f(u)\propto u^{10}\bone_{[0,1]}(u)$. The right
panel of (b) shows the ratios $\E[(U-M_{k+1})^{2}]/\E[(U-M_{k})^{2}]$
for the four methods as a function of $k$. }
\label{fig:Mapprox_ex2}
\end{figure}
\end{defn}

\begin{example}
\label{ex:uniform} If $U$ is uniformly distributed on a compact
interval, all four martingales coincide. For example, if $U\lawis\mathrm{Unif}(0,1)$,
each of the four martingales from Definition \ref{def:4martingales}
satisfies $M_{k}=\E[U\mid\sigma(\pi_{k})]$, where $\pi_{k}:=\{[j/2^{k},(j+1)/2^{k}):0\leq j<2^{k}\}$
and it follows that $\E[(U-M_{k})^{2}]=4^{-k}/12$. 
\end{example}

The intuition for the minimax and median martingales is that at each
splitting step, we balance the ``sizes'' of $U\bone_{\{U\in I\}}$
on the two sets. The ``size'' corresponds to the (unconditional)
variance for the minimax martingale and the total probability for
the median martingale. Intending to minimize the MSE $\E[(U-M_{k})^{2}]$
at each step $k$, the variance martingale naturally arises as an
algorithm that greedily reduces the remaining risk within $U$ in
each iteration through layers. 

Motivated by a martingale embedding problem, \citet{simons1970martingale}
first introduced the Simons martingale and established the
a.s.~convergence $M_{k}\to U$ (and hence also in $L^{2}$), but
did not analyze the convergence rate. A recent motivation for studying
the convergence rate of the Simons martingale arises from the construction
of powerful e-values in hypothesis testing \citep{ramdas2024hypothesis}.
\citet{zhang2023exact} (Lemma 5.6) proved that if $U\in L^{2+\delta}$
for some $\delta>0$ and $\{M_{k}\}$ is the Simons martingale, then
there exist $C>0$ and $r\in(0,1)$ such that $\E[(U-M_{k})^{2}]\leq Cr^{k}$,
and $r<0.827$ is feasible if $U$ is bounded. Our Theorem \ref{thm:martingale approx}(ii)
improves the rate to $r=1/2$ and hence provides a tighter theoretical
bound. We also show by example that the rate $r=1/2$ is optimal.

The terminology \textit{martingale approximation} has been used extensively
in the probability literature with different meanings. In \citet{ruschendorf1985wasserstein},
it refers to the best approximation of a random vector by a (single)
martingale based on ideas from optimal transport. In \citet{borovskikh2019martingale}
and \citet{hall2014martingale}, it refers to techniques from martingale
theory (such as inequalities and CLT rates) with various applications
in statistics. Similar techniques are also used in the study of stationary
ergodic sequences \citep{wu2004martingale,zhao2008martingale} and
Markovian walks \citep{grama2018limit}. Note that in our setting,
$U$ is a sum of martingale differences, but whose variance is bounded,
and hence one cannot apply the martingale CLT or its convergence
rates.

In the next two sections, we discuss two types of results: \textit{uniform}
rates for bounded $U$ (Appendix \ref{sec:uniform}) and \textit{non-uniform}
rates (Appendix \ref{sec:nonuniform}) where the asymptotic constant
may depend on the possibly unbounded law of $U$. 

\subsection{Uniform convergence rates}

\label{sec:uniform} 
\begin{thm}
\label{thm:martingale approx} Let $U$ be a $[0,1]$-valued atomless
random variable.
The following statements hold.

\begin{enumerate}
\item[(i)] If $\{M_{k}\}_{k\geq0}$ is the variance martingale, $\E[(U-M_{k})^{2}]\leq2^{-2-2k/3}$. 
\item[(ii)] If $\{M_{k}\}_{k\geq0}$ is the Simons martingale, $\E[(U-M_{k})^{2}]\leq2^{1-k}$. 
\item[(iii)] If $\{M_{k}\}_{k\geq0}$ is the minimax martingale, $\E[(U-M_{k})^{2}]\leq0.4\cdot2^{-2k/3}$. 
\item[(iv)] If $\{M_{k}\}_{k\geq0}$ is the median martingale, $\E[(U-M_{k})^{2}]\leq2^{-k}$. 
\end{enumerate}
\end{thm}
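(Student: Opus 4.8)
My plan rests on a common reduction. Rewrite the MSE at level $k$ using the law of total variance as a sum over the current partition,
\[
R_k:=\E[(U-M_k)^2]=\sum_{A\in\pi_k}V(A),\qquad V(A):=\p(U\in A)\,\var(U\mid U\in A),
\]
and note that when $A$ splits into $A_1,A_2$ with conditional means $m_1,m_2$, the law of total variance on $A$ gives $V(A)=V(A_1)+V(A_2)+\Delta(A)$ with $\Delta(A)=\frac{\p(A_1)\p(A_2)}{\p(A)}(m_1-m_2)^2\ge 0$; this is exactly the per-edge contribution in \eqref{eq:binary rep}, and summing over a level gives $R_{k+1}=R_k-\sum_{A\in\pi_k}\Delta(A)$. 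So everything reduces to a \emph{local} lower bound on the decrease of $V(A)$ over one split --- or, where that fails, over a bounded number of consecutive splits. Two elementary tools do most of the work: the barycenter identity $\p(A_1)(m_A-m_1)=\p(A_2)(m_2-m_A)$ (valid for any split), and Popoviciu's / Bhatia--Davis' inequality --- a variable supported in $[c,d]$ with mean $\mu$ has variance at most $(\mu-c)(d-\mu)\le(d-c)^2/4$; for the Simons rule the split point is the parent mean, so the two children are $[a,m_A)$ and $[m_A,b)$ and the Bhatia--Davis bound applies to each with an explicit endpoint.

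\textbf{Median (iv) and Simons (ii).} The median case is immediate: a median split halves probabilities, so every $A\in\pi_k$ with positive mass has $\p(U\in A)=2^{-k}$; bounding $\var(U\mid U\in A)\le\ell_A^2/4$ with $\ell_A$ the diameter of $\supp(U)\cap A$, and using $\sum_{A\in\pi_k}\ell_A\le 1$, yields $R_k\le\frac14 2^{-k}\sum_{A\in\pi_k}\ell_A^2\le\frac14 2^{-k}$. For Simons the split is at the mean, so the barycenter identity pins down $\Delta(A)$; the catch is that one mean-split need not contract $V(A)$ by any fixed factor --- an almost-degenerate interval gets split very unbalancedly and $\Delta(A)$ can be an arbitrarily small fraction of $V(A)$ --- but then each child inherits a proportionally smaller Bhatia--Davis budget (the product of distances from its mean to its endpoints), which forces the MSE down over the next one or two levels. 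Combining the barycenter identity with Bhatia--Davis on the children and optimizing the resulting two-level estimate gives $R_k\le 2^{1-k}$, and this rate is sharp: for the self-similar law consisting of an atom at an endpoint of mass tending to $\frac12$ together with a rescaled copy of the law, $R_{k+1}/R_k\to\frac12$.

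\textbf{Variance (i) and minimax (iii).} For these rules the split is not at the mean, so I would argue by comparison: by \eqref{eq:var m def} the variance-optimal split maximizes $\Delta(A)$ and hence beats every reference split --- in particular the Simons split, the median split, and a split balancing $V(A_1)$ against $V(A_2)$ --- while the minimax split controls $\max\{V(A_1),V(A_2)\}$ by construction. Exactly as for Simons, a single greedy split can be end-cut-like and make negligible immediate progress, and this is precisely why the rate is $2^{-2k/3}$ rather than $2^{-k}$: one shows instead that over any three consecutive levels the support has been cut finely enough that the accumulated $\Delta$ is at least a fixed fraction of $V(A)$, i.e.\ a three-level contraction by a factor $\le\frac14=2^{-2}$, whence $R_k\le C\,2^{-2k/3}$. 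Propagating the constants through the three-level estimate, using $R_0=\var(U)\le\frac14$ and accounting for $k\not\equiv 0\pmod 3$, gives $C=2.71$ for the variance martingale and $C=0.4$ for the minimax martingale; the smaller constant in the minimax case reflects that balancing $\max\{V(A_1),V(A_2)\}$ automatically excludes the most unbalanced configurations.

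\textbf{Main obstacle.} The technical heart is the multi-level local estimate for the variance and minimax rules: one must show that a ``flat'' greedy split (small $\Delta$) is necessarily followed by definite progress within the next one or two levels, and then optimize the constants in the resulting three-level inequality; the analogous but milder two-level estimate is what delivers the sharp constant for Simons. In every case this comes down to ruling out a configuration that prevents the variance from decaying geometrically over a bounded window --- that is, to quantitatively defeating the end-cut-preference phenomenon that would stall a naive one-level argument.
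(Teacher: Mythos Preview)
Your median argument (iv) is essentially the paper's (with a slightly sharper constant), so that part is fine.

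For the minimax and variance martingales, however, your proposed mechanism has a genuine gap. You correctly observe that the minimax split forces $\max\{V(A_1),V(A_2)\}\le V(A)/2$, hence $\max_{A\in\pi_k}V(A)\le 2^{-k}\var(U)$. But this halving of the \emph{maximum} does not by itself give a three-level contraction of the \emph{sum} $R_k=\sum_{A\in\pi_k}V(A)$ by a factor $1/4$: at each level there are twice as many pieces, so the trivial bound is $R_{k+3}\le 8\cdot 2^{-3}\max_{A\in\pi_k}V(A)$, which is no better than $R_k$. A purely local ``small $\Delta$ now forces large $\Delta$ later'' argument cannot close this, because the budget that prevents $R_k$ from stagnating is \emph{global}: it is the constraint $\sum_{A\in\pi_k}\sqrt{\var(U\mid U\in A)}\le 1$, coming from the fact that the intervals are disjoint in $[0,1]$. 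The paper exploits this via H\"older's inequality,
\[
R_k=\sum_j p_j d_j^2\le\Bigl(\sum_j p_j\Bigr)^{1/3}\Bigl(\max_j p_jd_j^2\cdot\sum_j d_j\Bigr)^{2/3}\le\bigl(2^{-k}\var(U)\bigr)^{2/3},
\]
which is where the exponent $2/3$ actually comes from --- not from a three-level recursion. For the variance martingale the paper first establishes the $\max_j p_jd_j^2\le 2^{1-k}$ bound by a separate argument (using that the optimal split point is the midpoint of the two child means, then tracking interval endpoints recursively), and only then applies the same H\"older step. Your outline has neither the H\"older ingredient nor the midpoint/tracking argument, and your comparison ``variance beats Simons in $\Delta$'' does not supply the needed max bound.

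For Simons (ii), your two-level sketch is too vague to assess. The paper's proof is not a two-level contraction of $R_k$ either: it uses the separated-tree property of the Simons construction to track, along each root-to-leaf path, bounds on edge lengths and one-step probabilities, eventually showing $\max_j p_jd_j\le 2^{1-k}$ and then $\E[(M_{k-1}-M_k)^2]\le(\max_j p_jd_j)\sum_j d_j\le 2^{1-k}$. If you have a genuine two-level Bhatia--Davis estimate that yields $R_{k+1}\le R_k/2$ (or $R_{k+2}\le R_k/4$), it would be a nice simplification, but as written the optimization step is not spelled out.
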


The proof of Theorem \ref{thm:martingale approx} is lengthy and technical, which we defer to Appendix \ref{A.2}.

The general case of a bounded $U$ can be derived by a scaling argument,
since the constructions in Definition \ref{def:4martingales} are
scale-invariant. Finding the \textit{optimal} rate $r\in(0,1)$ (where
$\E[(U-M_{k})^{2}]\leq Lr^{k}$ for some universal constant $L$)
is also an intriguing question. Observe that one cannot have $r<1/4$
by Example \ref{ex:uniform} above. In Examples \ref{ex:simons1/2}
and \ref{ex:median} below,
we show that the rate $r$ in Theorem \ref{thm:martingale approx}
is indeed optimal for the Simons and median martingales.

\begin{example}[Rate $r=1/2$ is optimal for the Simons martingale]
\label{ex:simons1/2} Fix an arbitrary $s\in(1/2,1)$ and let $U$
satisfy $\p(U=-1)=1-s$ and for $J\geq0$, 
\begin{align*}
\p\bigg(U=\sum_{j=1}^{J}\Big(\frac{1-s}{s}\Big)^{j}\bigg)=s^{J+1}(1-s).
\end{align*}
It follows that $U$ is a bounded random variable. Next, we compute
that 
\begin{align*}
\E\bigg[U\mid U\geq\sum_{j=1}^{J}\Big(\frac{1-s}{s}\Big)^{j}\bigg] & =\bigg(\sum_{\ell=J}^{\infty}s^{\ell+1}(1-s)\bigg)^{-1}\sum_{\ell=J}^{\infty}s^{\ell+1}(1-s)\sum_{j=1}^{\ell}\Big(\frac{1-s}{s}\Big)^{j}\\
 & =s^{-(J+1)}\sum_{\ell=J}^{\infty}s^{\ell+1}\frac{(1-s)^2}{2s-1}\bigg(1-\Big(\frac{1-s}{s}\Big)^{\ell}\bigg)\\
 &=\frac{1-s}{2s-1}\bigg(1-\Big(\frac{1-s}{s}\Big)^{J+1}\bigg)\\
 & =\sum_{j=1}^{J+1}\Big(\frac{1-s}{s}\Big)^{j}.
\end{align*}
It is then straightforward to verify that in the Simons martingale
$\{M_{k}\}_{k\geq0}$, each $M_{k+1}-M_{k}$ is supported on the set
$\{0,-((1-s)/s)^{k},((1-s)/s)^{k+1}\}$, where $\p(M_{k+1}-M_{k}=-((1-s)/s)^{k})=(1-s)s^{k}$
and $\p(M_{k+1}-M_{k}=((1-s)/s)^{k+1})=s^{k+1}$. It follows that
\begin{align*}
\E[(M_{k+1}-M_{k})^{2}]=(1-s)^{2k+1}s^{-(k+1)}\asymp\Big(\frac{(1-s)^{2}}{s}\Big)^{k}.
\end{align*}
In other words, the rate $r$ is given by $r=(1-s)^{2}/s$. If $s$
is picked close to $1/2$, $r$ can be made close enough to $1/2$.
Therefore, $r=1/2$ is the optimal rate parameter. This example is
illustrated with an atomic distribution $U$, but a slight twist also
leads to an example for an absolutely continuous law $U$. 
\end{example}

\begin{example}[Rate $r=1/2$ is optimal for the median martingale]
\label{ex:median} Let $s\in(0,1)$ and a bounded random variable
$U$ satisfy 
\begin{align*}
\p\Big(U=\sum_{j=1}^{k-1}s^{j-1}-s^{k-1}\Big)=2^{-k},~k\geq1.
\end{align*}
Next, we compute that 
\begin{align*}
\E\Big[U\mid U\geq\sum_{j=1}^{k}s^{j-1}-s^{k}\Big] & =2^{k}\sum_{\ell=k+1}^{\infty}2^{-\ell}\bigg(\sum_{j=1}^{\ell-1}s^{j-1}-s^{\ell-1}\bigg)\\
 & =2^{k}\sum_{\ell=k+1}^{\infty}2^{-\ell}\Big(\frac{1}{1-s}-\frac{s^{\ell-1}(2-s)}{1-s}\Big)\\
 & =\frac{1}{1-s}-\frac{2-s}{s(1-s)}\frac{s^{k+1}/2}{1-s/2}\\
 & =\sum_{j=1}^{k}s^{j-1}.
\end{align*}
Note also that 
\[
\sum_{j=1}^{k}s^{j-1}-s^{k}<\sum_{j=1}^{k}s^{j-1}<\sum_{j=1}^{k+1}s^{j-1}-s^{k+1}.
\]
It is then straightforward to verify that the law of $M_{k+1}-M_{k}$
is supported on $\{0,\pm s^{k}\}$ with $\p(M_{k+1}-M_{k}=s^{k})=\p(M_{k+1}-M_{k}=-s^{k})=2^{-(k+1)}$.
We then obtain that 
\begin{align*}
\E[(M_{k+1}-M_{k})^{2}]=2^{-k}s^{2k}.
\end{align*}
In other words, the rate $r=s^{2}/2$ has a limit $1/2$ as $s\to1$
from below; hence, the optimal rate is $1/2$. Again, a simple twist
yields similar examples, where the law of $U$ is absolutely continuous. 
\end{example}

\subsection{Non-uniform convergence rates}

\label{sec:nonuniform}

The above Theorem \ref{thm:martingale approx} requires that the support
of $U$ is bounded and the asymptotic constant depends only on $\sup\supp U-\inf\supp U$.
We show in the next result that, without the bounded range assumption,
non-uniform convergence rates can still be guaranteed. 
\begin{thm}
\label{thm:non-uniform} Let $U$ be an atomless random variable with
a finite $(2+\ee)$-th moment for some $\ee>0$, and $\{M_{k}\}_{k\geq0}$
be one of the four partition-based martingale approximations given
by Definition \ref{def:4martingales}. Then there exists a constant
$r\in(0,1)$ depending only on $\ee$ and another constant $C>0$ depending on the law of $U$,
such that 
\begin{align*}
\E[(U-M_{k})^{2}]\leq Cr^{k}.
\end{align*}
In particular, $M_{k}\to U$ both a.s.~and in $L^{2}$. 
\end{thm}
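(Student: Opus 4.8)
The plan is to bootstrap Theorem~\ref{thm:martingale approx} (the bounded, uniform-rate case) by truncation: a high-probability bounded ``core'' of the distribution is handled by rescaling that result, while the ``tails'' are absorbed into the $(2+\ee)$-th moment. We may assume $U$ is unbounded both above and below, the one-sided and bounded cases being only easier. Two structural observations drive the reduction. First, \emph{self-similarity}: for each of the four rules the split point of an interval $[a,b)$ is a function of $\mathrm{Law}(U\mid U\in[a,b))$ alone, so if $B\in\pi_k$ then the cells of $\pi_{k+j}$ contained in $B$ form exactly the depth-$j$ partition produced by the same rule applied to $U_B:=(U\mid U\in B)$; consequently
\[
\E\!\left[(U-M_{k+j})^2\bone_{\{U\in B\}}\right]=\p(U\in B)\,\E\!\left[(U_B-M^{(B)}_{j})^2\right].
\]
Second, \emph{affine invariance}: all four constructions commute with increasing affine maps, so Theorem~\ref{thm:martingale approx} yields universal constants $c_0>0$ and $r_0\in(0,1)$ (one may take $r_0=2^{-2/3}$, $c_0=2.71$) with $\E[(V-M^{(V)}_{j})^2]\le c_0\,(\esssup V-\essinf V)^2\,r_0^{\,j}$ for every bounded atomless $V$.

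Next, fix an intermediate depth $m$ and a truncation level $T>0$, both to be chosen of order $n$. Decomposing over the cells of $\pi_m$ and using the first display, $\E[(U-M_n)^2]=\sum_{B\in\pi_m}\p(U\in B)\,\E[(U_B-M^{(B)}_{n-m})^2]$, I would split the cells into three groups. For a bounded cell $B\subseteq[-T,T]$ with $\mathrm{diam}(B)\le S$ (a fixed, say unit, scale), the second display bounds the summand by $c_0S^2r_0^{\,n-m}\p(U\in B)$, so these contribute at most $c_0S^2r_0^{\,n-m}$. For a cell $B$ disjoint from $[-T,T]$, the crude bound $\E[(U_B-M^{(B)}_{n-m})^2]\le\var(U\mid U\in B)\le\E[U^2\mid U\in B]$ (monotonicity of the MSE in the depth) shows these contribute at most $\sum_B\E[U^2\bone_{\{U\in B\}}]\le\E[U^2\bone_{\{|U|>T\}}]\le\E[|U|^{2+\ee}]\,T^{-\ee}$. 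The remaining cells are those of diameter exceeding $S$ that meet $[-T,T]$: the two unbounded cells $L_m=(-\infty,b_m)$, $R_m=[a_m,\infty)$, and finitely many ``wide'' bounded cells. Since $b_m\downarrow-\infty$ and $a_m\uparrow\infty$, once $m$ exceeds the (finite) depth at which $b_m<-T<T<a_m$ the cells $L_m,R_m$ are disjoint from $[-T,T]$ and already accounted for; the wide bounded cells meeting $[-T,T]$ tile a bounded interval of finite length $\Lambda=\Lambda(T)$, and their total contribution is $\le c_0\Lambda^2r_0^{\,n-m}$ by the second display.

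It remains to quantify the escape of the extremal cells and to choose $m,T$. The crucial input is that the $(2+\ee)$-th moment forces \emph{geometric} decay of the relevant tail quantities. For the median rule one has $\p(U\ge a_m)=2^{-m}$, whence by Hölder $\E[U^2\bone_{\{U\ge a_m\}}]=\int_0^{2^{-m}}Q_{1-s}^2\,\d s\le(\E|U|^{2+\ee})^{2/(2+\ee)}\,2^{-m\ee/(2+\ee)}$, and symmetrically on the left; for the other rules one first checks that $\p(U\ge a_{m+1}\mid U\ge a_m)$ is bounded away from $1$ (and symmetrically), so $a_m,b_m$ escape at a geometric rate in the relevant quantiles and the same moment bound applies. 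Taking $T=T(n)$ growing geometrically and $m=\theta n$ for a suitable $\theta\in(0,1)$ chosen (depending on $\ee$) so that the refinement gain $r_0^{(1-\theta)n}$ beats the at-most-geometric growth of $T(n)$, of $\Lambda(T(n))$, and of the escape depth, all three groups decay geometrically, giving $\E[(U-M_n)^2]\le Cr^n$. Finally, $\{M_n\}$ is an $L^2$-bounded martingale, hence converges a.s.\ and in $L^2$; since $\E[(U-M_n)^2]\to0$ the limit is $U$, which is the last assertion.

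\emph{Main obstacle.} The delicate point is the last step: making every error term decay at a common geometric rate. One must (i) show the extremal split points march to $\pm\infty$ fast enough that the \emph{mass-weighted} tail variances $\p(U\in L_m)\var(U\mid U\in L_m)$ and $\p(U\in R_m)\var(U\mid U\in R_m)$ are geometrically (not merely $o(1)$) small — it is precisely here that a $(2+\ee)$-th moment rather than a bare second moment is indispensable, and the argument is rule-specific (transparent for the median, requiring an extra ``mass does not concentrate at an endpoint'' lemma for the others); and (ii) control the width of the wide bounded cells straddling $\pm T$, which a priori can greatly exceed the ambient scale, while choosing the truncation level and intermediate depth (both linear in $n$) so that the geometric refinement gain dominates. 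The bookkeeping, especially for small $\ee$ and for laws with very unequal tails, is where care is needed.
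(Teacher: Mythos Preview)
Your approach via cell-level truncation and self-similarity is genuinely different from the paper's. The paper instead exploits the martingale structure of $\{M_k\}$ directly: after the first split, $\supp M_1=\{x,y\}$ with $x<y$, and conditional on $M_1=y$ the process is bounded below by $x$ (since all future cells lie to the right of the first split point). Ville's inequality then gives $\p(\tau_M<\infty)=O(1/M)$ for the first exit time $\tau_M$ from $[-M,M]$, and one has the decomposition $\E[(U-M_k)^2]\le\E[U^2\bone_{\{\tau_M<\infty\}}]+CM^2r^k$ (the second term from Theorem~\ref{thm:martingale approx} on the event where the martingale stays bounded). H\"older with the $(2+\ee)$-moment then controls the first term by $O(M^{-\delta'})$, and balancing $M$ against $k$ finishes. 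The essential virtue is that Ville's inequality uses only the martingale property, so the argument is \emph{uniform across all four rules} with no rule-specific tail analysis.

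Your outline works for the median rule (where $\p(U\ge a_m)=2^{-m}$ exactly) and the minimax rule (where the tail-weighted variance halves at each step). But the step you correctly flag as the main obstacle is a real gap for the Simons and variance rules: the assertion that $\p(U\ge a_{m+1}\mid U\ge a_m)$ is bounded away from $1$ is not a routine check. For Simons, $a_{m+1}=\E[U\mid U\ge a_m]$, so this asks that the conditional laws $(U-a_m\mid U\ge a_m)$ have $\p(V\ge\E[V])$ uniformly bounded away from $1$; for a nonnegative random variable this probability can be arbitrarily close to $1$, and there is no evident reason the $(2+\ee)$-moment alone forces a uniform bound along the Simons sequence. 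The same difficulty afflicts your control of $a_m-b_m$: without geometric decay of $\p(U\ge a_m)$ you cannot bound the growth of $a_m$ via the moment condition. The paper's Ville-based route sidesteps all of this precisely because it never needs to track the extremal split points.
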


\begin{proof}
Let $\supp M_{1}=\{x,y\}$ where $x<y$. Take $M>\sup\{x,y\}$. Let
$\tau_{M}^{+}$ be the first hitting time to $\{x:x\ge M\}$, $\tau_{M}^{-}$
be the first hitting time to $\{x:x\le-M\}$, and $\tau_{M}=\min\{\tau_{M}^{+},\tau_{M}^{-}\}$.
It is straightforward to prove (see the proof of Lemma 5.6 of \citet{zhang2023exact}) that
for some $r\in(0,1)$ and $C>0$, 
\begin{align*}
\E[(U-M_{k})^{2}]\leq\E[\bone_{\{\tau_{M}<\infty\}}U^{2}]+CM^{2}r^{k}.
\end{align*}
Since the supports of $\{M_{k}\}_{k\geq0}$ correspond to the means
of $U$ conditioned on nested partitions of $\R$, we have $\tau_{M}^{+}<\infty$
implies $M_{k}\geq x$ for all $k\geq0$ and $M_{1}=y$. Therefore,
conditional on the event $\tau_{M}^{+}<\infty$, $\{M_{k}\}_{k\geq0}$ is a martingale
that is bounded from below by $x$. By Ville's inequality, 
\begin{align*}
\p(\tau_{M}^{+}<\infty) & \le\p(\tau_{M}^{+}<\infty\mid M_{1}>x)\\
 & =\p\left(\sup_{k\ge0}M_{k}\geq M\mid M_{1}>x\right)\le\frac{\E[U\mid M_{1}>x]-x}{M-x}=O(M^{-1}).
\end{align*}
Similarly, the same analysis holds for $\tau_{M}^{-}$. Hence, we
conclude that $\p(\tau_{M}<\infty)=O(M^{-1}).$ The rest follows line
by line as in \citet{zhang2023exact}, which we sketch for completeness.
By Hölder's inequality, $\E[\bone_{\{\tau_{M}<\infty\}}U^{2}]=O(M^{-\delta'})$
for some $\delta'>0$ depending only on $\ee$. With the choices $M=r^{-k/(2+\delta')}$ and
$q=r^{\delta'/(2+\delta')}\in(0,1)$, we have that for some $C>0$,
\begin{align*}
\E[(U-M_{k})^{2}]\leq C(M^{-\delta'}+M^{2}r^{k})\leq Cq^{k}.
\end{align*}
The final statement follows immediately from the martingale convergence
theorem. 
\end{proof}

If $U$ is bounded, Theorem \ref{thm:non-uniform} is a special case
of Theorem \ref{thm:martingale approx}. The case of the Simons martingale
has been established by \citet{zhang2023exact}, and our proof of
Theorem \ref{thm:non-uniform} follows a similar route. Since
the asymptotic constant is allowed to depend on the law of $U$, the
optimal non-uniform rate $r$ might be smaller than the uniform ones
(in Theorem \ref{thm:martingale approx}).

Recall from Appendix \ref{sec:uniform} that the rates obtained in
Theorem \ref{thm:martingale approx} are asymptotically optimal for
the Simons and median martingales (see Examples \ref{ex:simons1/2}
and \ref{ex:median}). For the variance and minimax martingales, we
show in the next result that the rate $r=1/4$ is optimal under certain
regularity conditions on the law of $U$. 
\begin{thm}
\label{prop:1/4+ee for variance} Let $\{M_{k}\}_{k\geq0}$ be either
the minimax or variance martingale converging to $U$. Suppose that
$U$ is bounded and has a bounded and continuous density $f$ with
$\inf f>0$ on $\supp U$, which we assume is a connected interval.
Then for any $r>1/4$, there exists a constant $C>0$ (depending on
$r$ and the law of $U$) such that 
\begin{align*}
\E[|U-M_{k}|^{2}]\leq Cr^{k}.
\end{align*}
\end{thm}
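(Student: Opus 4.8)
The plan is to reduce the claim to an almost-bisection property: once the intervals produced by the algorithm are small, the density $f$ is nearly constant on each of them, so every split lands arbitrarily close to the midpoint, interval lengths contract by a factor arbitrarily close to $1/2$ per level, and the two-sided estimate $\var(U\mid U\in I)\asymp|I|^2$ (valid since $0<\inf f\le\sup f<\infty$ on $\supp U$) then converts this into MSE decay at any rate exceeding $1/4$. Write $\supp U=[a_0,b_0]$ and $m_f:=\inf f>0$, $M_f:=\sup f<\infty$ on $[a_0,b_0]$; for an interval $I$ with $\p(U\in I)>0$ put $J_I:=\conv(\supp(U\mid U\in I))\subseteq[a_0,b_0]$ and $h_I:=|J_I|$, so that a split of $I$ at a point interior to $J_I$ yields children whose associated lengths sum to $h_I$. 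The only distributional inputs I will need are: for any subinterval $J\subseteq[a_0,b_0]$ with $\theta:=\sup_Jf/\inf_Jf$, one has $\p(U\in J)\in[(\inf_Jf)|J|,(\sup_Jf)|J|]$ and
$$\frac{|J|^2}{12\,\theta^2}\ \le\ \var(U\mid U\in J)\ \le\ \Big(\tfrac1{12}+\tfrac{\theta-1}{4}\Big)|J|^2\ \le\ \frac{|J|^2}{4},$$
where the middle two inequalities come from an elementary estimate on distributions whose density lies between $\theta^{-1}$ and $\theta$ times a uniform density (concentrate, resp.\ push mass to the endpoints), and the last is Popoviciu's inequality.

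Fix $r>1/4$ and set $\rho:=\sqrt r\in(\tfrac12,1)$. The key lemma I would establish is: there is $\theta_0>1$ such that whenever $I$ satisfies $\sup_{J_I}f\le\theta_0\inf_{J_I}f$, each child of $I$ — under either the minimax or the variance rule — has associated length at most $\rho\,h_I$. Writing $J_I=[\alpha,\beta]$, $h:=h_I$, and, for an interior split point $u$, $g(u):=\p(U\in[\alpha,u))\var(U\mid U\in[\alpha,u))$ and $\tilde g(u):=\p(U\in[u,\beta))\var(U\mid U\in[u,\beta))$, the displayed bounds applied on $[\alpha,u)$ and $[u,\beta)$ (where the density ratio is still $\le\theta_0$) give constants $A_-\le A_+$ with $A_+/A_-\le\theta_0^{3}(3\theta_0-2)\to1$ as $\theta_0\downarrow1$ and
$$A_-(u-\alpha)^3\le g(u)\le A_+(u-\alpha)^3,\qquad A_-(\beta-u)^3\le\tilde g(u)\le A_+(\beta-u)^3.$$
By the law of total variance, both $g+\tilde g$ and $\max\{g,\tilde g\}$ are strictly smaller at every interior point of $J_I$ than at the endpoints, so both split points lie in $(\alpha,\beta)$. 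For the minimax rule, comparison of the split point $u^*$ with the midpoint $\tfrac{\alpha+\beta}2$ gives $\max\{g(u^*),\tilde g(u^*)\}\le A_+(h/2)^3$, whence $(u^*-\alpha)^3\le(A_+/A_-)(h/2)^3$ and, symmetrically, $(\beta-u^*)^3\le(A_+/A_-)(h/2)^3$. For the variance rule, with $s:=(u^*-\alpha)/h$ and the identity $s^3+(1-s)^3=\tfrac14+3(s-\tfrac12)^2$,
$$A_-h^3\Big(\tfrac14+3\big(s-\tfrac12\big)^2\Big)\ \le\ g(u^*)+\tilde g(u^*)\ \le\ g\big(\tfrac{\alpha+\beta}{2}\big)+\tilde g\big(\tfrac{\alpha+\beta}{2}\big)\ \le\ \tfrac14A_+h^3,$$
so $(s-\tfrac12)^2\le\tfrac1{12}(A_+/A_--1)$. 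In either case both children have length at most $(\tfrac12+\xi(\theta_0))h$ with $\xi(\theta_0)\downarrow0$ as $\theta_0\downarrow1$, and we fix $\theta_0$ so that $\tfrac12+\xi(\theta_0)\le\rho$.

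To close the argument, use the uniform continuity of $f$ on $[a_0,b_0]$ together with $m_f>0$ to pick $\eta>0$ with $\sup_Jf\le\theta_0\inf_Jf$ whenever $J\subseteq[a_0,b_0]$ and $|J|\le\eta$. After rescaling $U$ to $[0,1]$ (the constructions are scale-invariant), Theorem~\ref{thm:martingale approx}(i),(iii) gives $\E[(U-M_k)^2]\to0$; since $\E[(U-M_k)^2]\ge\p(U\in A)\var(U\mid U\in A)\ge\tfrac{m_f}{12}(m_f/M_f)^2\,h_A^3$ for every $A\in\pi_k$, the diameters $\max_{A\in\pi_k}h_A$ tend to $0$, so there is $K_0$ with $h_A\le\eta$ for all $A\in\pi_k$ and $k\ge K_0$. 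Since a child of an interval with $h\le\eta$ again has length $\le\eta$, the lemma applies at every split from level $K_0$ on, and induction yields $h_A\le\eta\,\rho^{\,k-K_0}$ for all $A\in\pi_k$, $k\ge K_0$. Hence, using $\var(U\mid U\in A)\le h_A^2/4$ and $\sum_{A\in\pi_k}\p(U\in A)=1$,
$$\E[(U-M_k)^2]=\sum_{A\in\pi_k}\p(U\in A)\var(U\mid U\in A)\le\tfrac14\eta^2\rho^{\,2(k-K_0)}=\tfrac14\eta^2\rho^{-2K_0}\,r^k\qquad(k\ge K_0),$$
and enlarging the constant to absorb the finitely many $k<K_0$ (where $\E[(U-M_k)^2]\le\var(U)<\infty$) completes the proof, with $C$ depending on $r$ and the law of $U$ as required.

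I expect the only genuinely delicate point to be the estimate on the variance split point. The crude bound $g(u^*)\le g(u^*)+\tilde g(u^*)\le(g+\tilde g)\big(\tfrac{\alpha+\beta}{2}\big)\le2A_+(h/2)^3$ only forces $u^*-\alpha\lesssim2^{1/3}h/2$, i.e.\ a contraction factor about $2^{1/3}/2>1/2$, which would degrade the rate to $r\approx0.4$ — too weak. One must instead keep the sum $g+\tilde g$ intact and exploit that $s\mapsto s^3+(1-s)^3$ attains its minimum $\tfrac14$ at $s=\tfrac12$ \emph{with vanishing first derivative}, so that being within a factor $1+o(1)$ of the minimal sum pins $s$ to within $o(1)$ of $\tfrac12$. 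The remaining ingredients — the density-comparison estimates and the fact that $L^2$-convergence forces the diameters to vanish — are routine.
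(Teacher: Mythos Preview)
Your proposal is correct and follows the same overall strategy as the paper: use the $L^2$ convergence from Theorem~\ref{thm:martingale approx} to force every interval in $\pi_k$ to become short eventually (the paper phrases this as ``split points are dense''), observe that on short intervals the density is nearly constant, show that on nearly-uniform intervals every split lands close to the midpoint, and iterate to get decay at any rate above $1/4$. The density-comparison bounds you use and the bootstrapping from $K_0$ onward are essentially what the paper does.

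The one difference worth noting is the treatment of the variance rule. You treat it directly via the identity $s^3+(1-s)^3=\tfrac14+3(s-\tfrac12)^2$, which pins the variance split point near the midpoint and which you correctly flag as the delicate step. The paper sidesteps this entirely: since the variance split, by definition, leaves no more total remaining variance than the minimax split on the \emph{same} interval, once one shows that the minimax-split decay factor $\E[(U-M_1)^2]/\var(U)$ approaches $1/4$ on near-uniform intervals, the variance-split decay factor is automatically at most this. So the paper only needs to locate the minimax split point, and never analyzes the variance split point at all. Your direct argument buys a bit more information (you actually pin down where the variance split lands), but the paper's reduction is shorter.
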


\begin{proof}
First, we claim that the split points for the nested partitions $\{\pi_{k}\}$
are dense. Indeed, for both martingales, we have shown that the variances
on each component in the partition $\pi_{k}$ go to zero uniformly
as $k\to\infty$. If the split points were not dense, some element
in the partition would cover an interval of positive length, on which
the variance of $U$ cannot vanish since we assumed $\inf f>0$ and
$\sup f<\infty$. Therefore, for any $\ee>0$, there exists $k$ such
that the $k$-th partition $\pi_{k}$ is such that on each interval
$A\in\pi_{k}$, $\sup_{A}f\leq(1+\ee)\inf_{A}f$.

Since one-step variance is always better than one-step minimax, it
remains to show that for an interval $A$ with $(1-\ee^2)\sup_{A}f\leq\inf_{A}f$,
there exists $C(\ee)\downarrow1/4$ as $\ee>0$ such that at each
step the total variance reduces by a factor of $C(\ee)$ for the minimax
martingale.

To see this, without loss of generality we start from $U$ supported
on $[0,1]$ (meaning that $\p(U\in[0,1])=1$) whose density takes
values in $[1-\ee,1+\ee]$, where $\ee\in(0,1)$. Then for any interval
$[a,b]\subseteq[0,1]$,
\begin{align*}
\E[U\mid U\in[a,b]]\in\Big[a+\frac{1-\ee}{2}(b-a),\,a+\frac{1+\ee}{2}(b-a)\Big].
\end{align*}
Therefore,
\begin{align*}
\p(U\in[a,b])\var(U\mid U\in[a,b])
&\leq \sup_{m\in[a+\frac{1-\ee}{2}(b-a),\,a+\frac{1+\ee}{2}(b-a)]}
\int_a^b (1+\ee)(x-m)^2\,\d x\\
&\leq \frac{(1+\ee)(1+3\ee^2)}{12}(b-a)^3.
\end{align*}
Similarly,
\begin{align*}
\p(U\in[a,b])\var(U\mid U\in[a,b])
&\geq \inf_{m\in[a+\frac{1-\ee}{2}(b-a),\,a+\frac{1+\ee}{2}(b-a)]}
\int_a^b (1-\ee)(x-m)^2\,\d x\\
&\geq \frac{1-\ee}{12}(b-a)^3.
\end{align*}
It follows that the split point $x_*\in[0,1]$ must satisfy
\begin{align*}
\frac{1-\ee}{12}x_*^3\leq \frac{(1+\ee)(1+3\ee^2)}{12}(1-x_*)^3
\quad\text{and}\quad
\frac{1-\ee}{12}(1-x_*)^3\leq \frac{(1+\ee)(1+3\ee^2)}{12}x_*^3.
\end{align*}
Therefore, for some $a_\ee\to0$, $|x_*-1/2|\leq a_\ee$.

A direct computation yields $\var(U)\geq \frac{1-\ee}{12}$. The total
remaining variance $\E[(U-M_1)^2]$ is bounded by
\begin{align*}
\E[(U-M_1)^2]
&\leq \frac{(1+\ee)(1+3\ee^2)}{12}\bigl(x_*^3+(1-x_*)^3\bigr)\\
&\leq \frac{(1+\ee)(1+3\ee^2)}{6}\Big(\frac12+a_\ee\Big)^3.
\end{align*}
Hence, as $\ee\to0$, the ratio becomes
\begin{align*}
\frac{\E[(U-M_1)^2]}{\var(U)}
\leq \frac{(1+\ee)(1+3\ee^2)}{4(1-\ee)}(1+2a_\ee)^3\to\frac14,
\end{align*}
as desired.
\end{proof}

In other words, under the assumptions in Theorem \ref{prop:1/4+ee for variance},
the MSE has an asymptotic convergence rate of ``$1/4+\ee$". Unfortunately,
we are unable to remove the assumptions for the density of $U$ in
Theorem \ref{prop:1/4+ee for variance} nor give counterexamples.
We conjecture that the same conclusion of Theorem \ref{prop:1/4+ee for variance}
holds without those assumptions.

\subsection{Proof of Theorem \ref{thm:martingale approx} }

\label{A.2}

\paragraph*{Variance}

\begin{lemma}\label{lemma:split middle} Let $U$ be an atomless
random variable supported on $I=[a,b]$. Then there exists 
\[
u_{*}\in\argmin_{u\in I}\big(\p(U\in[a,u))\var(U\mid U\in[a,u))+\p(U\in[u,b))\var(U\mid U\in[u,b))\big)
\]
such that 
\begin{align}
2u_{*}=\E[U\mid U\leq u_{*}]+\E[U\mid U>u_{*}]\label{eq:2u*}
\end{align}
and moreover, splitting at $u_{*}$ and at $u_{\mathrm{var}}$ defined
through \eqref{eq:var m def} have the same effect,\footnote{Recall that we break ties in \eqref{eq:var m def} by choosing the
largest minimizer.} i.e., $\p(U\leq u_{*})=\p(U\leq u_{\mathrm{var}})$. \end{lemma}
\begin{proof}
Let $Q$ be the quantile function of $U$. Since $U$ is atomless, if $u=Q(p)$, then
\begin{align*}
 & \p(U\in[a,u))\var(U\mid U\in[a,u))+\p(U\in[u,b))\var(U\mid U\in[u,b))\\
 & =p\var(U\mid U\in[a,Q(p)))+(1-p)\var(U\mid U\in[Q(p),b)).
\end{align*}
Let $u_{\mathrm{var}}$ be defined through \eqref{eq:var m def} and
$p_{\mathrm{var}}=\p(U\leq u_{\mathrm{var}})$. Then with the short-hand
notation $m_{a}(p)=\E[U\mid U\in[a,Q(p))]$ and $m_{b}(p)=\E[U\mid U\in[Q(p),b)]$,
we have 
\begin{align*}
p_{\mathrm{var}} & \in\argmin_{p\in[0,1]}\big(p\var(U\mid U\in[a,Q(p)))+(1-p)\var(U\mid U\in[Q(p),b))\big)\\
 & =\argmax_{p\in[0,1]}\big(p\,m_{a}(p)^{2}+(1-p)m_{b}(p)^{2}\big)=:\argmax_{p\in[0,1]}\Psi_{U}(p).
\end{align*}
Observe that for $p\in(0,1)$, 
\[
\frac{\d}{\d p}m_{a}(p)=\frac{Q(p)-m_{a}(p)}{p}\qquad\text{ and }\qquad\frac{\d}{\d p}m_{b}(p)=\frac{m_{b}(p)-Q(p)}{1-p}.
\]
We then have 
\begin{align*}
\frac{\d}{\d p}\Psi_{U}(p) & =m_{a}(p)^{2}+2m_{a}(p)(Q(p)-m_{a}(p))-m_{b}(p)^{2}+2m_{b}(p)(m_{b}(p)-Q(p))\\
 & =(m_{b}(p)-m_{a}(p))(m_{a}(p)+m_{b}(p)-2Q(p)).
\end{align*}
Since $U$ is atomless, it is non-degenerate, and hence $m_{b}(p_{\mathrm{var}})-m_{a}(p_{\mathrm{var}})>0$
and $p_{\mathrm{var}}\in(0,1)$. Note that the functions $m_{a},m_{b}$
are differentiable and hence continuous; $Q$ may have jumps but has
limits on both sides at every point. Since $p_{\mathrm{var}}\in\argmax_{p\in[0,1]}\Psi_{U}(p)$,
there are two cases: 
\begin{itemize}
\item $Q$ is continuous at $p_{\mathrm{var}}$. Then $\Psi_{U}'(p_{\mathrm{var}})=0$,
so $m_{a}(p_{\mathrm{var}})+m_{b}(p_{\mathrm{var}})=2Q(p_{\mathrm{var}})$.
In this case, we also have $Q(p_{\mathrm{var}})=u_{\mathrm{var}}$.
Therefore, taking $u_{*}=u_{\mathrm{var}}$ gives \eqref{eq:2u*}. 
\item $Q$ is discontinuous at $p_{\mathrm{var}}$. Then we must have $\Psi_{U}'(p_{\mathrm{var}}^{-})\geq0$
and $\Psi_{U}'(p_{\mathrm{var}}^{+})\leq0$, where for a generic function
$f$ defined on $\R$ we use the notation $f(x^{-})=\lim_{y\to x^{-}}f(y)$
and $f(x^{+})=\lim_{y\to x^{+}}f(y)$. Equivalently, 
\begin{align}
2Q(p_{\mathrm{var}}^{-})\leq m_{a}(p_{\mathrm{var}})+m_{b}(p_{\mathrm{var}})\leq2Q(p_{\mathrm{var}}^{+}).\label{eq:2eq}
\end{align}
Let $u_{1}=Q(p_{\mathrm{var}}^{-})$ and $u_{2}=Q(p_{\mathrm{var}}^{+})$.
Since $Q$ is discontinuous at $p_{\mathrm{var}}$, we have $\p(u_{1}\leq U\leq u_{2})=0$.
Therefore, the map $u\mapsto\p(U\leq u)$ is constant on $[u_{1},u_{2}]$
and for each $u\in[u_{1},u_{2}]$, $(\E[U\mid U\leq u],\E[U\mid U>u])=(m_{a}(p_{\mathrm{var}}),m_{b}(p_{\mathrm{var}}))$.
It follows from \eqref{eq:2eq} that \eqref{eq:2u*} holds for some
$u_{*}\in[u_{1},u_{2}]$. Using the definition \eqref{eq:var m def}
and $p_{\mathrm{var}}=\p(U\leq u_{\mathrm{var}})$, we have $u_{\mathrm{var}}=\max_{u\in[u_{1},u_{2}]}u=u_{2}$,
which implies that $\p(U\leq u_{*})=\p(U\leq u_{\mathrm{var}})$. 
\end{itemize}
Therefore, in both cases, the desired claim is verified. 
\end{proof}
\begin{rem}
If the support of $U$ is a connected interval, Lemma \ref{lemma:split middle}
is a special case of Theorem 1 of \citet{ishwaran2015effect} applied
with $f$ monotone and $X\lawis \mathrm{Unif}(0,1)$. 
\end{rem}

\begin{proof}[Proof for the variance martingale]
Recall the binary subtree representation \eqref{eq:binary rep}.
Since $U$ is atomless, the binary subtree is in fact always a binary
tree. At level $k$, $\E[(M_{k-1}-M_{k})^{2}]$ is a sum of $2^{k}$
terms, which we may rewrite as $\sum_{j}p_{j}d_{j}^{2}$ using a change
of variable. This corresponds to the $2^{k}$ edges (indexed by $j\in[2^{k}]$)
in the binary tree between levels $k-1$ and $k$, and $p_{j}$ represents
the probabilities of the edge $j$ and $d_{j}$ is the location difference
between the end vertices of the edge, or the \textit{length} of the
edge. Our goal is to bound from above 
\begin{align*}
\E[(M_{k-1}-M_{k})^{2}]=\sum_{j=1}^{2^{k}}p_{j}d_{j}^{2}.
\end{align*}
The key is first to bound the quantity 
\begin{align*}
\max_{1\leq j\leq2^{k-1}}(p_{2j-1}d_{2j-1}^{2}+p_{2j}d_{2j}^{2}).
\end{align*}
That is, the variance increases by splitting the $j$-th mass at level
$k-1$ (which is the common ancestor of the $(2j-1)$-th and $(2j)$-th
masses). By the martingale property, 
\begin{align*}
p_{2j-1}d_{2j-1}=p_{2j}d_{2j}.
\end{align*}
It follows that 
\begin{align}
p_{2j-1}d_{2j-1}^{2}+p_{2j}d_{2j}^{2}=(p_{2j-1}+p_{2j})d_{2j-1}d_{2j}.\label{eq:pdd}
\end{align}
The general strategy to analyze the quantity \eqref{eq:pdd} is to
start from an arbitrary mass/node $\bM$ at level $k-1$, trace back
its ancestors, bound from above the probability of the mass (which
is $p_{2j-1}+p_{2j}$ in the above expression), and establish upper bounds
for the length of the edges (which is $d_{2j-1},\,d_{2j}$).

The path leading to a mass $\bM$ at level $k-1$ in the binary tree
representation \eqref{eq:binary rep} consists of $k$ nodes: $\emptyset=\bN_{0},\bN_{1},\dots,\bN_{k-1}=\bM$,
where $\bN_{j}\in V_{j}$. We denote the lengths of the edges connecting
the node $\bN_{j-1}$ to its two descendants on level $j$ by $a_{j},b_{j},\,j\in[k-1]$,
where $b_{j}=|\ell_{\bN_{j-1}}-\ell_{\bN_{j}}|$. It follows by the
martingale property that the mass $\bM$ has probability 
\begin{align}
\prod_{j=1}^{k-1}q_{j}:=\prod_{j=1}^{k-1}\frac{a_{j}}{a_{j}+b_{j}}.\label{eq:prob prod}
\end{align}
The lengths of the edges of $\bM$ are the mean distances of its children,
which are bounded by the distances from $\sum_{j=1}^{k-1}b_{j}$ (the
location of $\bM$) to the endpoints of the interval that $\bM$ carries
in the splitting process. Therefore, it is natural to update these
two distances as we trace the path from the root of the tree down
to the node $\bM$.

Let $a_{0}=b_{0}=1\geq\sup U-\inf U$. We consider the following algorithm,
which starts from the tuple $(a_{0},b_{0},a_{1},b_{1},q_{1})$ and
updates it $k-1$ times for each $j\in[k-1]$ when we trace the path
down to $\bM$. At step $j$, the tuple represents the two pairs $(a_{k},b_{k}),\,(a_{j},b_{j})$
that govern the maximum length of the edges of the $j$-th node (that
is, the distance from its location to the boundaries of the intervals
this node represents) for a certain $j\in[k-1]$, as well as the probability
weight $q_{j}$ multiplied at this step. In a generic setting, suppose
that we have $(a_{k},b_{k},a_{j},b_{j},q_{j})$ updated at the node
$\bN_{j}\in V_{j}$. This means that the edges forming $a_{k+1},\dots,a_{j}$
are all on one side of the path since only the last one of these contributes
to the bound. Now when updating the node at level $j+1$ there are
two cases: 
\begin{enumerate}
\item[(i)] $a_{j+1}$ and $a_{j}$ are on the same side. We update $(a_{k},b_{k},a_{j},b_{j},q_{j})$
with $(a_{k},b_{k},a_{j+1},b_{j+1},q_{j+1})$, where $a_{j+1}\leq(a_{j}+b_{j})/2$
and $b_{j+1}\leq(a_{k}+b_{k})/2$ (they are the two edges) and $q_{j+1}=a_{j+1}/(a_{j+1}+b_{j+1})$; 
\item[(ii)] $a_{j+1}$ and $a_{j}$ are on distinct sides. We update $(a_{k},b_{k},a_{j},b_{j},q_{j})$
with $(a_{j},b_{j},a_{j+1},b_{j+1},q_{j+1})$, where $b_{j+1}\leq(a_{j}+b_{j})/2$,
$a_{j+1}\leq(a_{k}+b_{k})/2$, and $q_{j+1}=a_{j+1}/(a_{j+1}+b_{j+1})$. 
\end{enumerate}
Here, we have used \eqref{eq:prob prod} and Lemma \ref{lemma:split middle}.

Let $(a,b,a',b',q)$ be generic entries of the tuple and $(\hat{a},\hat{b},\hat{a}',\hat{b}',\hat{q})$
be the updated tuple, where we recall $\hat{q}=\hat{a}'/(\hat{a}'+\hat{b}')$.
We claim that 
\begin{align}
\frac{(\hat{a}+\hat{b})(\hat{a}'+\hat{b}')}{(a+b)(a'+b')}\leq\frac{1}{2\hat{q}}.\label{eq:variance claim}
\end{align}
To see this, we separately consider the two cases above. 
\begin{itemize}
\item In case (i), $\hat{a}=a$ and $\hat{b}=b$, and 
\begin{align*}
\hat{a}'+\hat{b}'=a_{j+1}+b_{j+1}=\frac{a_{j+1}}{\hat{q}}\leq\frac{a_{j}+b_{j}}{2\hat{q}}=\frac{a'+b'}{2\hat{q}}.
\end{align*}
\item In case (ii), $\hat{a}=a'$ and $\hat{b}=b'$, and 
\begin{align*}
\hat{a}'+\hat{b}'=a_{j+1}+b_{j+1}=\frac{a_{j+1}}{\hat{q}}\leq\frac{a_{k}+b_{k}}{2\hat{q}}=\frac{a+b}{2\hat{q}}.
\end{align*}
\end{itemize}
This proves \eqref{eq:variance claim}.


For each interval $I\in\pi_k$, write
\[
p_I:=\p(U\in I),\qquad m_I:=\E[U\mid U\in I],\qquad
L_I:=m_I-\inf I,\qquad R_I:=\sup I-m_I.
\]
The same recursive argument leading to \eqref{eq:variance claim} yields
\begin{align}
\max_{I\in\pi_k} p_IL_I R_I \le 2^{-2-k}.
\label{eq:max-cell-risk}
\end{align}
Indeed, along each update, the quantity $p_I L_I R_I$ is multiplied by at most $1/2$,
while initially $L_I+R_I\le \sup U-\inf U\le 1$, so $L_I R_I\le 1/4$.

Since $U\mid(U\in I)$ is supported on the interval $I$, we have
\[
\var(U\mid U\in I)\le (m_I-\inf I)(\sup I-m_I)=L_I R_I.
\]
Therefore,
\begin{align*}
\E[(U-M_k)^2]
&=\sum_{I\in\pi_k} p_I\var(U\mid U\in I)
 \le \sum_{I\in\pi_k} p_IL_I R_I.
\end{align*}
Set $s_I:=\sqrt{L_I R_I}$. By Hölder's inequality and since $\sum_{I\in\pi_k} p_I=1$,
\begin{align*}
\sum_{I\in\pi_k} p_IL_I R_I
=\sum_{I\in\pi_k} p_I s_I^2
&\le
\Big(\sum_{I\in\pi_k} p_I s_I^3\Big)^{2/3}
\Big(\sum_{I\in\pi_k} p_I\Big)^{1/3}\\
&\le
\Big(\max_{I\in\pi_k} p_I s_I^2 \sum_{I\in\pi_k} s_I\Big)^{2/3}.
\end{align*}
Moreover,
\[
2s_I\le L_I+R_I=\sup I-\inf I,
\]
so, since the intervals in $\pi_k$ are disjoint and contained in $[\inf U,\sup U]$,
\[
\sum_{I\in\pi_k} s_I
\le \frac12 \sum_{I\in\pi_k} (\sup I-\inf I)
\le \frac12(\sup U-\inf U)\le \frac12.
\]
Combining this with \eqref{eq:max-cell-risk}, we obtain
\[
\E[(U-M_k)^2]
\le \Big(2^{-2-k}\cdot \frac12\Big)^{2/3}
\le 2^{-2-2k/3},
\] and the proof is complete.
\end{proof}

\paragraph*{Simons }
\begin{proof}[Proof for the Simons martingale]

It is noted in \citet{zhang2023exact} that the Simons martingale
satisfies the separated tree condition (i.e., the branches of the
tree representation from all different levels, when projected onto
the real line as intervals, are either disjoint or have a containment
relationship), which is a consequence of the construction. We follow
a similar argument to the variance martingale. Consider a node $\bM$
at level $k-1$ and a path $(\emptyset=\bN_{0},\dots,\bN_{k-1}=\bM)$
leading to $\bM$, where $\emptyset$ denotes the root of a binary
tree. We provide an algorithm that recursively defines the tuples
$(A_{j},B_{j},q_{j}),~0\leq j\leq k-1$ for each node $\bN_{j},~0\leq j\leq k-1$
on that path, from the root to $\bM$. The quantities $A_{j}$ and $B_{j}$
represent the maximum possible lengths of the two edges emanating
from a node $\bN_{j}$, and $q$ represents the one-step splitting
probability leading to $\bN_{j}$, based on the constraints from the
separated tree property.

We start from $(A_{0},B_{0},q_{0})=(1,1,1)$. Again, denote the lengths
of the edges connecting the node $\bN_{j-1}$ to its two descendants
at level $j$ by $a_{j},b_{j},\,j\in[k-1]$, where $b_{j}=|\ell_{\bN_{j-1}}-\ell_{\bN_{j}}|$.
Consider the node $\bN_{j}$ at level $j$. Then $(A_{j},B_{j},q_{j})$
must be of the form (while not distinguishing the order of $A_{j}$
and $B_{j}$) 
\begin{align*}
(A_{j},B_{j},q_{j})=\Big(b_{i}-\sum_{\ell=i+1}^{j}b_{\ell},\,b_{j},\,\frac{a_{j}}{a_{j}+b_{j}}\Big)
\end{align*}
for some $i\leq j-1$. This happens when the edges $a_{i+1},\dots,a_{j}$
lie on the same side of the path $(\bN_{1},\dots,\bN_{k})$. For the
descendant $\bN_{j+1}$ of $\bN_{j}$, there are two possibilities: 
\begin{enumerate}
\item[(i)] if $a_{j+1}$ is on the same side of the path as $a_{j}$, we define
\begin{align*}
(A_{j+1},B_{j+1},q_{j+1})=\Big(b_{i}-\sum_{\ell=i+1}^{j+1}b_{\ell},\,b_{j+1},\,\frac{a_{j+1}}{a_{j+1}+b_{j+1}}\Big),
\end{align*}
where $a_{j+1}\leq b_{j}$; 
\item[(ii)] otherwise, we define 
\begin{align*}
(A_{j+1},B_{j+1},q_{j+1})=\Big(b_{j}-b_{j+1},\,b_{j+1},\,\frac{a_{j+1}}{a_{j+1}+b_{j+1}}\Big),
\end{align*}
where $a_{j+1}\leq b_{i}-\sum_{\ell=i+1}^{j}b_{\ell}$. 
\end{enumerate}
The choice of $(A_{j+1},B_{j+1},q_{j+1})$ is justified by the separated
tree property.

Let $q_{j+1}=a_{j+1}/(a_{j+1}+b_{j+1})$. We next express the ratio
$A_{j+1}B_{j+1}/(A_{j}B_{j})$ after the update in step $j+1$ using
$q_{j+1}$. We have in case (i) that 
\begin{align*}
\frac{\Big(b_{i}-\sum_{\ell=i+1}^{j+1}b_{\ell}\Big)b_{j+1}}{\Big(b_{i}-\sum_{\ell=i+1}^{j}b_{\ell}\Big)b_{j}}\leq\frac{b_{j+1}}{b_{j}}\leq\frac{b_{j+1}}{a_{j+1}}=\frac{1}{q_{j+1}}-1,
\end{align*}
and in case (ii), 
\begin{align*}
\frac{(b_{j}-b_{j+1})b_{j+1}}{\Big(b_{i}-\sum_{\ell=i+1}^{j}b_{\ell}\Big)b_{j}}\leq\frac{b_{j+1}}{b_{i}-\sum_{\ell=i+1}^{j}b_{\ell}}\leq\frac{b_{j+1}}{a_{j+1}}=\frac{1}{q_{j+1}}-1.
\end{align*}
In both cases, $A_{j+1}B_{j+1}/(A_{j}B_{j})\leq1/q_{j+1}-1$. In other
words, the contribution of the two edges of $\bM$ is upper bounded
by 
\begin{align*}
A_{k-1}B_{k-1}\prod_{\ell=0}^{k-1}q_{\ell}=\prod_{\ell=1}^{k-1}\frac{A_{\ell}B_{\ell}}{A_{\ell-1}B_{\ell-1}}\prod_{\ell=1}^{k-1}q_{\ell}\leq\prod_{\ell=1}^{k-1}\Big(\frac{1}{q_{\ell}}-1\Big)q_{\ell}=\prod_{\ell=1}^{k-1}(1-q_{\ell}),
\end{align*}
where we recall that $\{q_{\ell}\}_{1\leq\ell\leq k-1}$ are the one-step
probabilities leading to $\bM$. On the other hand, we also know that
the probability that an edge of $\bM$ carries must be bounded by
$\prod_{\ell=1}^{k-1}q_{\ell}$. Therefore, we conclude that, with
the decomposition 
\begin{align*}
\E[(M_{k-1}-M_{k})^{2}]=\sum_{j=1}^{2^{k}}p_{j}d_{j}^{2},
\end{align*}
it holds that for each $j\in[2^{k}]$, there exist $\{q_{\ell}\}_{1\leq\ell\leq k-1}$
such that 
\begin{align*}
p_{j}d_{j}^{2}\leq\prod_{\ell=1}^{k-1}(1-q_{\ell})\quad\text{and}\quad p_{j}\leq\prod_{\ell=1}^{k-1}q_{\ell}.
\end{align*}
Multiplying the two inequalities leads to 
\begin{align*}
p_{j}d_{j}\leq\sqrt{\prod_{\ell=1}^{k-1}q_{\ell}(1-q_{\ell})}\leq2^{1-k}.
\end{align*}
Since this holds uniformly in $j$, we conclude that 
\begin{align*}
\E[(M_{k-1}-M_{k})^{2}]=\sum_{j=1}^{2^{k}}p_{j}d_{j}^{2}\leq\max_{j\in[2^{k}]}(p_{j}d_{j})\sum_{j=1}^{2^{k}}d_{j}\leq2^{1-k}.
\end{align*}
By the martingale property and the main result of \citet{simons1970martingale}, 
\begin{align*}
\E[(M_{k}-U)^{2}]=\sum_{j=k}^{\infty}\E[(M_{j}-M_{j+1})^{2}]\leq\sum_{j=k}^{\infty}2^{-j}\leq2^{1-k}.
\end{align*}
This completes the proof. 
\end{proof}

\paragraph*{Minimax }
\begin{proof}[Proof for the minimax martingale]
At level $k$, there are $2^{k}$ vertices (denoted by $j\in[2^{k}]$)
in the binary tree representation of the partition-based martingale
approximation. Denote the probabilities of the vertices by $p_{j}$
(which correspond to $\p(U\in A_{j}),~A_{j}\in\pi_{k}$) and the locations
by $\ell_{j}$. It follows that 
\begin{align*}
\E[(U-M_{k})^{2}]=\sum_{j=1}^{2^{k}}p_{j}\E[(U-\ell_{j})^{2}\mid U\in A_{j}]=:\sum_{j=1}^{2^{k}}p_{j}d_{j}^{2}.
\end{align*}
Note that $\sum_{j}p_{j}=1$ and $\sum_{j}d_{j}\leq\sup U-\inf U\leq1$.
It follows analogously to the Hölder's inequality argument in \eqref{eq:holder2-1}
that 
\begin{align*}
\E[(U-M_{k})^{2}] & \leq\max_{j\in[2^{k}]}(p_{j}d_{j}^{2})^{2/3}.
\end{align*}
Let $\phi(u)=\p(U<u)\var(U\mid U<u)$ and $\psi(u)=\p(U\geq u)\var(U\mid U\geq u)$.
Note that $\phi$ is increasing and $\psi$ is decreasing in $u$,
and both functions are continuous if we assume that $U$ is atomless.
Recall by the minimax property that at each step we pick $u\in\R$
such that $\max\{\phi(u),\psi(u)\}$ is minimized. This means that
$\phi(u)=\psi(u)$. Since $\var(U)\geq\phi(u)+\psi(u)$, we have both
$\phi(u)\leq\var(U)/2$ and $\psi(u)\leq\var(U)/2$. Inductively,
we have for any $k$, $\max(p_{j}d_{j}^{2})\leq2^{-k}\var(U)\leq2^{-2-k}$.
Combining the above, we arrive at 
\begin{align*}
\E[(U-M_{k})^{2}]\leq\max_{j\in[2^{k}]}(p_{j}d_{j}^{2})^{2/3}\leq2^{-4/3}2^{-2k/3}\leq0.4\cdot2^{-2k/3},
\end{align*}
as desired. 
\end{proof}

\paragraph*{Median }
\begin{proof}[Proof for the median martingale]
In the same setting as in the minimax case, we write 
\begin{align*}
\E[(U-M_{k})^{2}]=\sum_{j=1}^{2^{k}}p_{j}d_{j}^{2},
\end{align*}
where $\sum_{j}p_{j}=1$ and $\sum_{j}d_{j}\leq1$. We further know
that $\max_{j}p_{j}=2^{-k}$ by the median splitting construction.
Therefore, 
\begin{align*}
\E[(U-M_{k})^{2}]=\sum_{j=1}^{2^{k}}p_{j}d_{j}^{2}\leq2^{-k}\sum_{j=1}^{2^{k}}d_{j}^{2}\leq2^{-k},
\end{align*}
as desired. 
\end{proof}

\section{Proofs of main results}

\subsection{Proofs of results in Section \ref{sec:minimax}}

\label{sec:minimax proof}\label{sec:proofs}
\begin{proof}[Proof of Lemma \ref{lemma:continuous}]
We take arbitrary $a<b$ and define events $E=E_{1}\cup E_{2}$, where $E_{1}=\{\bX\in A,\,X_{j}<a\}$
and $E_{2}=E\setminus E_{1}=\{\bX\in A,\,a\leq X_{j}<b\}.$ By the total
variance formula, 
\begin{align}
\begin{split}\var(Y\mid E) & =\E[\var(Y\mid\sigma(E_{1},E_{2}))\mid E]+\var(\E[Y\mid\sigma(E_{1},E_{2})]\mid E)\\
 & \geq\E[\var(Y\mid\sigma(E_{1},E_{2}))\mid E]\geq\p(E_{1}\mid E)\var(Y\mid E_{1}).
\end{split}
\label{eq:totalvarianceformula}
\end{align}
Observe that $\var(Y\mid E)=\var(Y\mid\bX\in A,X_{j}<b)$ and $\var(Y\mid E_{1})=\var(Y\mid\bX\in A,X_{j}<a)$.
Inserting these two expressions back into \eqref{eq:totalvarianceformula},
we obtain 
\[
\var(Y\mid\bX\in A,X_{j}<b)\geq\frac{\p(\bX\in A,X_{j}<a)}{\p(\bX\in A,X_{j}<b)}\var(Y\mid\bX\in A,X_{j}<a).
\]
Rearranging gives 
\[
\p(\bX\in A,\,X_{j}<a)\var(Y\mid\bX\in A,\,X_{j}<a)\leq\p(\bX\in A,\,X_{j}<b)\var(Y\mid\bX\in A,\,X_{j}<b)
\]
and therefore $\varphi_L$ is non-decreasing; and
similarly $\varphi_R$ is non-increasing. The continuity of $\varphi_L$ and $\varphi_R$ follows from the marginally atomless condition.
\end{proof}

We also need the following risk decomposition lemma.
\begin{lemma}\label{lemma:var decomposition} Let $\delta>0$. Then
for any joint distribution $(X,Y)$ with $X\geq\E[Y]$ or $X\leq \E[Y]$, it holds that
\begin{align}
\var(Y)\leq(1+\delta)\,\E[(X-Y)^{2}]+\frac{(1+\delta)^{3}}{\delta^{3}}(\sup\supp X-\inf\supp X)^{2}.\label{eq:var bound}
\end{align}
\end{lemma}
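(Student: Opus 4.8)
The plan is as follows. If $X$ is unbounded then $\sup\supp X-\inf\supp X=\infty$ and \eqref{eq:var bound} is trivial, so I will assume $X$ is bounded and write $R:=\sup\supp X-\inf\supp X<\infty$ and $\nu:=\E[X]$.

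The first step is to exploit the variational characterization of the variance: for \emph{any} constant $c$ one has $\E[(Y-c)^2]=\var(Y)+(\E[Y]-c)^2\geq\var(Y)$. The key choice here is $c=\nu=\E[X]$ (rather than the more obvious $c=\E[Y]$), which gives $\var(Y)\leq\E[(Y-\nu)^2]$ and is what will make the cross term in the next step collapse into a pure variance of $X$.

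Next I would write $Y-\nu=(Y-X)+(X-\nu)$ and apply the pointwise inequality $(a+b)^2\leq(1+\delta)a^2+(1+\delta^{-1})b^2$ (Young's inequality) with $a=Y-X$ and $b=X-\nu$, then take expectations:
\begin{align*}
\var(Y)&\leq\E[(Y-\nu)^2]\leq(1+\delta)\,\E[(X-Y)^2]+(1+\delta^{-1})\,\E[(X-\nu)^2]\\
&=(1+\delta)\,\E[(X-Y)^2]+(1+\delta^{-1})\var(X).
\end{align*}
Then I would bound $\var(X)$ by Popoviciu's inequality for a random variable supported in an interval of length $R$, namely $\var(X)\leq R^2/4$, so that $(1+\delta^{-1})\var(X)\leq\frac{1+\delta}{4\delta}R^2$.

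Finally, it remains to observe the (very crude) numerical inequality $\frac{1+\delta}{4\delta}\leq\frac{(1+\delta)^3}{\delta^3}$, which is equivalent to $\delta/2\leq 1+\delta$ and hence holds for every $\delta>0$; substituting completes the proof. There is no genuine obstacle here — the only points requiring a little care are disposing of the unbounded case at the outset and selecting $c=\E[X]$ in the first step (this is precisely what decouples $\E[(X-Y)^2]$ from the range term with the sharp constant $1+\delta$ in front). Notably, the hypothesis $X\geq\E[Y]$ is not actually needed for this argument; one could also keep the sharper constant $\frac{1+\delta}{4\delta}$ in place of $\frac{(1+\delta)^3}{\delta^3}$, but the stated form is all that is used later.
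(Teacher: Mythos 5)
Your proof is correct, and it takes a genuinely different and in fact simpler route than the paper's. The paper normalizes $\E[Y]=0$, rewrites \eqref{eq:var bound} as the quadratic inequality $(1+\delta)\E[X^2]-2(1+\delta)\E[XY]+\delta\E[Y^2]+C(\sup\supp X-\inf\supp X)^2\geq0$, completes the square to reduce to \eqref{eq:var2}, and then splits into cases according to the ratio $\sup\supp X/\inf\supp X$; that case analysis genuinely uses the hypothesis $X\geq\E[Y]$ (after centring it gives $X\geq0$, which is what lets the paper pass from $\E[X]^2$ to $(\inf\supp X)^2$ and then to $(\sup\supp X)^2$). Your argument — centring $Y$ at $c=\E[X]$ rather than $\E[Y]$, applying $(a+b)^2\leq(1+\delta)a^2+(1+\delta^{-1})b^2$ to $Y-\nu=(Y-X)+(X-\nu)$, and then invoking Popoviciu's bound $\var(X)\leq R^2/4$ — sidesteps the case analysis entirely, never uses $X\geq\E[Y]$, and delivers the sharper constant $(1+\delta^{-1})/4=\frac{1+\delta}{4\delta}$ in place of $\frac{(1+\delta)^3}{\delta^3}$; your final numerical check that $\frac{1+\delta}{4\delta}\leq\frac{(1+\delta)^3}{\delta^3}$ reduces to $\delta/2\leq1+\delta$ and is correct. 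The Young-inequality step you use is the same polarization trick the paper deploys elsewhere (e.g.\ in Step I of the proof of Theorem \ref{thm:minimax exp decay 2}), so your version is arguably more in the spirit of the rest of the paper; the only thing the paper's longer argument buys is nothing quantitative here, since your bound is strictly stronger and strictly more general.
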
 
\begin{proof}
By symmetry, we assume that $X\geq\E[Y]$. 
Let $C=(1+\delta)^{3}/\delta^{3}$. Without loss of generality, we
assume that $\E[Y]=0$. Equivalent to \eqref{eq:var bound} is 
\begin{align*}
(1+\delta)\,\E[X^{2}]-2(1+\delta)\,\E[XY]+\delta\E[Y^{2}]+C(\sup\supp X-\inf\supp X)^{2}\geq0.
\end{align*}
After completing the square, it remains to show 
\begin{align}
\E\Big[(\sqrt{\delta}\,Y-\frac{1+\delta}{\sqrt{\delta}}X)^{2}\Big]+\Big(C(\sup\supp X-\inf\supp X)^{2}-\frac{1+\delta}{\delta}\E[X^{2}]\Big)\geq0.\label{eq:var2}
\end{align}
If $\inf\supp X=0$ or $\sup\supp X/\inf\supp X\geq(1-\sqrt{(1+\delta)/(C\delta)})^{-1}$,
the second term is always non-negative and hence \eqref{eq:var2}
holds. Otherwise, since $C=(1+\delta)^{3}/\delta^{3}$, 
\begin{align*}
(1+\delta)(\inf\supp X)^{2}\geq(1+\delta)(1-\sqrt{(1+\delta)/(C\delta)})(\sup\supp X)^{2}=(\sup\supp X)^{2}.
\end{align*}
It follows that 
\begin{align*}
\E\Big[(\sqrt{\delta}\,Y-\frac{1+\delta}{\sqrt{\delta}}X)^{2}\Big] & \geq\E\Big[\sqrt{\delta}\,Y-\frac{1+\delta}{\sqrt{\delta}}X\Big]^{2}\\
 & =\frac{(1+\delta)^{2}}{\delta}\E[X]^{2}\geq\frac{(1+\delta)^{2}}{\delta}(\inf\supp X)^{2}\geq\frac{(1+\delta)}{\delta}(\sup\supp X)^{2}\geq\frac{1+\delta}{\delta}\E[X^{2}].
\end{align*}
This shows \eqref{eq:var2} and thus proves \eqref{eq:var bound}. 
\end{proof}
\vspace{0.3cm}

\begin{lemma}[Oscillation growth under cyclic axis-aligned refinement]
\label{lem:cyclic_oscillation_bound}
Let $\{\pi_k\}_{k\ge 0}$ be the sequence of partitions generated by the cyclic splitting schedule, and let
$k=dq+r$ with $q\in \mathbb{N}_0$ and $0\le r<d$. Suppose that
\[
g(\bx)=\sum_{i=1}^d g_i(x_i)
\]
is an additive representation with each $g_i$ of bounded variation. For a cell
$A=\prod_{i=1}^d I_i(A)\in \pi_k$, define
\[
\osc_A(g):=\sup_{\bx\in A} g(\bx)-\inf_{\bx\in A} g(\bx).
\]
Then
\[
\sum_{A\in \pi_k}\osc_A(g)
\le
2^{\,k-\lfloor k/d\rfloor}\sum_{i=1}^d \|g_i\|_{\mathrm{TV}}.
\]
Consequently, after taking the infimum over additive representations of $g$,
\[
\sum_{A\in \pi_k}\osc_A(g)\le 2^{\,k-\lfloor k/d\rfloor}\|g\|_{\mathrm{TV}}.
\]
\end{lemma}

\begin{proof}
Define
$$\osc_{I}(g_i):=\sup_{u\in I} g_i(u)-\inf_{u\in I} g_i(u).$$
For every cell $A=\prod_{i=1}^d I_i(A)$, additivity gives
\[
\osc_A(g)\le \sum_{i=1}^d \osc_{I_i(A)}(g_i).
\]
Hence it suffices to control, for each coordinate $i\in[d]$,
\[
S_i(k):=\sum_{A\in\pi_k}\osc_{I_i(A)}(g_i).
\]

At level $k$, the cyclic rule splits along coordinate
\[
j_k:=1+(k \bmod d).
\]
If a cell $A$ is not split, then its contribution to every $S_i$ is unchanged. If a cell $A$ is split into
$A^L$ and $A^R$ along coordinate $j_k$, then for every $i\neq j_k$,
\[
I_i(A^L)=I_i(A^R)=I_i(A),
\]
so the contribution to $S_i$ is duplicated:
\[
\osc_{I_i(A^L)}(g_i)+\osc_{I_i(A^R)}(g_i)
=
2\,\osc_{I_i(A)}(g_i).
\]
For the split coordinate $j_k$, the child intervals partition the parent interval, hence
\[
\osc_{I_{j_k}(A^L)}(g_{j_k})+\osc_{I_{j_k}(A^R)}(g_{j_k})
\le
\osc_{I_{j_k}(A)}(g_{j_k}).
\]
Therefore
\[
S_{j_k}(k+1)\le S_{j_k}(k),
\qquad
S_i(k+1)\le 2S_i(k)\quad\text{for } i\neq j_k.
\]

Fix $i\in[d]$. By time $k=dq+r$, coordinate $i$ has been selected exactly
$q+\mathbf{1}_{\{i\le r\}}$ times, and has not been selected exactly
\[
k-q-\mathbf{1}_{\{i\le r\}}
\]
times. Since $S_i$ never increases when coordinate $i$ is selected and can at most double otherwise, we obtain
\[
S_i(k)\le 2^{k-q-\mathbf{1}_{\{i\le r\}}}S_i(0)\le 2^{k-q}\|g_i\|_{\mathrm{TV}},
\]
because $\osc_{I_i(A_0)}(g_i)\le \|g_i\|_{\mathrm{TV}}$ at level $0$.

Summing over $i$ yields
\[
\sum_{A\in\pi_k}\osc_A(g)
\le
\sum_{i=1}^d S_i(k)
\le
2^{\,k-\lfloor k/d\rfloor}\sum_{i=1}^d \|g_i\|_{\mathrm{TV}}.
\]
Taking the infimum over all additive representations of $g$ gives the final claim.
\end{proof}

\begin{proof}[Proof of Theorem \ref{thm:minimax exp decay 2}]

We divide the proof into three steps.

\paragraph{Step I: reducing to an inequality involving \texorpdfstring{$\var(Y)$}{}.}

We claim that it suffices to prove 
\begin{align}
\E[(Y-M_{k})^{2}]\leq\inf_{g\in\G}\bigg((1+\delta)\,\E[(Y-g(\bX))^{2}]+(1+\delta^{-1})2^{-2\lfloor k/d\rfloor/3}(\n{g}_{\mathrm{TV}}\var(Y))^{2/3}\bigg).\label{eq:toprove}
\end{align}
To this end, we need an upper bound for $\var(Y)$. Fix $g\in\G$ and
let $g^*:=(\sup g+\inf g)/2$. By Minkowski's inequality, 
\begin{align}
\begin{split}\var(Y)^{1/2}=\min_{y\in\R}\E[(Y-y)^{2}]^{1/2}\leq\E[(Y-g^*)^{2}]^{1/2} & \leq\E[(Y-g(\bX))^{2}]^{1/2}+\E[(g(\bX)-g^*)^{2}]^{1/2}\\
 & \leq\E[(Y-g(\bX))^{2}]^{1/2}+\frac{1}{2}\n{g}_{\mathrm{TV}}.
\end{split}
\label{eq:varub}
\end{align}
Squaring both sides yields that for any $\delta>0$, 
\begin{align*}
\var(Y)\leq(1+\delta)\E[(Y-g(\bX))^{2}]+\frac{1+\delta^{-1}}{4}\n{g}_{\mathrm{TV}}^{2},
\end{align*}
where we have used $(a+b)^{2}\leq(1+\delta)a^{2}+(1+\delta^{-1})b^{2}$.
It follows that 
\begin{align}
\begin{split}\n{g}_{\mathrm{TV}}^{2/3}\var(Y)^{2/3} & \leq\n{g}_{\mathrm{TV}}^{2/3}\bigg((1+\delta)^{2/3}\E[(Y-g(\bX))^{2}]^{2/3}+\Big(\frac{1+\delta^{-1}}{4}\Big)^{2/3}\n{g}_{\mathrm{TV}}^{4/3}\bigg)\\
 & \leq\frac{2(1+\delta)}{3}\E[(Y-g(\bX))^{2}]+\bigg(\frac{1}{3}+\Big(\frac{1+\delta^{-1}}{4}\Big)^{2/3}\bigg)\n{g}_{\mathrm{TV}}^{2},
\end{split}
\label{eq:log ineq}
\end{align}
where we have used the inequality $x^{1/3}y^{2/3}\leq x/3+2y/3$,
which follows from the concavity of logarithm and Jensen's inequality.
Inserting \eqref{eq:log ineq} into \eqref{eq:toprove} yields \eqref{eq:d-dim rate}.

\paragraph{Step II: decomposition of the risk.}

It remains to establish \eqref{eq:toprove}. Denote by $\pi_{k}=\{I_{j}\}_{j\in J_{k}}$
the partition of $\R^{d}$ formed at level $k$ by the cyclic minimax
construction. Note that by construction, $\#J_{k}\leq2^{k}$ and $M_{k}\mid\bX\in I_{j}$
is a piecewise constant random variable, which takes a constant value
$y_{k,j}$ inside the box $I_{j}$, so $M_{k}\bone_{\{\bX\in I_{j}\}}=y_{k,j}\bone_{\{\bX\in I_{j}\}}$.
By definition, $y_{k,j}=\E[Y\mid\bX\in I_{j}]$. Define $g_{j}=g|_{I_{j}}$ and $\Ran(g_{j})=[\inf g_{j},\sup g_{j}]$, and recall that $\Delta g_{j}=\sup g_{j}-\inf g_{j}\leq\n{g_{j}}_{\mathrm{TV}}$.

The key step is to decompose the local mean squared error $\E[(Y-M_{k})^{2}\bone_{\{\bX\in I_{j}\}}]$
into two parts for each $j$, with the help of Lemma \ref{lemma:var decomposition}.
Next, we claim that 
\begin{align}
\E[(Y-M_{k})^{2}\bone_{\{\bX\in I_{j}\}}]=\E[(Y-y_{k,j})^{2}\bone_{\{\bX\in I_{j}\}}]\leq U_{j}+V_{j},\label{eq:uv}
\end{align}
where $U_{j},V_{j}$ are defined in the following. We divide into
two separate cases depending on the location of $y_{k,j}$: 
\begin{enumerate}
\item[(i)] $y_{k,j}\in\Ran(g_{j})$. Define $U_{j}=(1+\delta)\,\E[(Y-g(\bX))^{2}\bone_{\{\bX\in I_{j}\}}]$
and $$V_{j}=\min\big\{(1+\delta^{-1})\,\E[(g(\bX)-y_{k,j})^{2}\bone_{\{\bX\in I_{j}\}}],\,\E[(Y-M_{k})^{2}\bone_{\{\bX\in I_{j}\}}]\big\}.$$ 
\item[(ii)] $y_{k,j}\not\in\Ran(g_{j})$. Define $U_{j}=(1+\delta)\,\E[(Y-g(\bX))^{2}\bone_{\{\bX\in I_{j}\}}]$
and $$V_{j}=\min\Big\{\frac{(1+\delta)^{3}}{\delta^{3}}\,p_{j}(\Delta g_{j})^{2},\,\E[(Y-M_{k})^{2}\bone_{\{\bX\in I_{j}\}}]\Big\},$$
where $p_j:=\p(\bX\in I_j)$.
\end{enumerate}
In case (i), \eqref{eq:uv} follows immediately from the polarization
identity $(a+b)^{2}\leq(1+\delta)a^{2}+(1+\delta^{-1})b^{2}$. On
the other hand, for case (ii), \eqref{eq:uv} follows by applying
Lemma \ref{lemma:var decomposition} to the joint distribution $(g(\bX),Y)\mid\bX\in I_{j}$
and using that $y_{k,j}=\E[Y\mid\bX\in I_{j}]$. This completes the
proof of \eqref{eq:uv}.

\paragraph{Step III: controlling \texorpdfstring{$\sum_{j}U_{j}$}{} and \texorpdfstring{$\sum_{j}V_{j}$}{}.}

We have by construction and \eqref{eq:uv} that 
\begin{align}
\E[(Y-M_{k})^{2}]=\sum_{j\in J_{k}}\E[(Y-M_{k})^{2}\bone_{\{\bX\in I_{j}\}}]\leq\sum_{j\in J_{k}}U_{j}+\sum_{j\in J_{k}}V_{j}.\label{eq:construction}
\end{align}
The first term is easy to control by definition: 
\begin{align}
\sum_{j\in J_{k}}U_{j}=\sum_{j\in J_{k}}(1+\delta)\,\E[(Y-g(\bX))^{2}\bone_{\{\bX\in I_{j}\}}]=(1+\delta)\,\E[(Y-g(\bX))^{2}].\label{eq:uj}
\end{align}
To control $\sum_{j}V_{j}$, we apply Hölder's inequality. We have 
\begin{align}
\begin{split}\sum_{j\in J_{k}}V_{j} & \leq\Big(\sum_{j\in J_{k}}p_{j}\big(\frac{V_{j}}{p_{j}}\big)^{3/2}\Big)^{2/3}\Big(\sum_{j\in J_{k}}p_{j}\Big)^{1/3}\\
 & \leq\Bigg((\max_{j\in J_{k}}V_{j})\sum_{j\in J_{k}}\sqrt{\frac{V_{j}}{p_{j}}}\Bigg)^{2/3}\leq\Bigg((\max_{j\in J_{k}}\E[(Y-M_{k})^{2}\bone_{\{\bX\in I_{j}\}}])\sum_{j\in J_{k}}\sqrt{\frac{V_{j}}{p_{j}}}\Bigg)^{2/3}.
\end{split}
\label{eq:holder2}
\end{align}
To further bound the right-hand side of \eqref{eq:holder2}, we first recall
from \eqref{eq:2-k} that as a consequence of the marginally atomless
property, 
\begin{align}
\max_{j\in J_{k}}\E[(Y-M_{k})^{2}\bone_{\{\bX\in I_{j}\}}]\leq2^{-k}\var(Y).\label{eq:maxvar1}
\end{align}
Next, we claim that 
\begin{align}
\sqrt{\frac{V_{j}}{p_{j}}}\leq(1+\delta^{-1})^{3/2}\Delta g_{j}.\label{eq:v/p}
\end{align}
This is immediate for case (ii) above (when $y_{k,j}\not\in\Ran(g_{j})$).
By definition, we have for case
(i) above (when $y_{k,j}\in\Ran(g_{j})$), 
\[
V_{j}\leq(1+\delta^{-1})p_{j}(\Delta g_{j})^{2}<p_{j}(1+\delta^{-1})^{3}(\Delta g_{j})^{2},
\]
as desired. On the other hand, suppose that $k=dq+r$ for some integer
$q$ and $0\leq r<d$, and denote by $\pi_{k}$ the resulting partition
of $\R^{d}$. 
We use Lemma \ref{lem:cyclic_oscillation_bound} and observe that 
\begin{align*}
\sum_{A\in\pi_{k}}\Big(\sup_{\bx\in A}g(\bx)-\inf_{\bx\in A}g(\bx)\Big)\leq2^{q(d-1)+r}\n{g}_{\mathrm{TV}}.
\end{align*}
In other words, we have that for $k\in\N$, 
\begin{align}
\sum_{j\in J_{k}}\Delta g_{j}=\sum_{A\in\pi_{k}}\Big(\sup_{\bx\in A}g(\bx)-\inf_{\bx\in A}g(\bx)\Big)\leq2^{k-\lfloor k/d\rfloor}\n{g}_{\mathrm{TV}},\label{eq:d dim TV}
\end{align}
where $\lfloor\cdot\rfloor$ is the floor function. Combined with
\eqref{eq:v/p}, we arrive at 
\begin{align*}
\sum_{j\in J_{k}}\sqrt{\frac{V_{j}}{p_{j}}}\leq(1+\delta^{-1})^{3/2}2^{k-\lfloor k/d\rfloor}\n{g}_{\mathrm{TV}}.
\end{align*}
Inserting into \eqref{eq:holder2} gives 
\begin{align}
\sum_{j\in J_{k}}V_{j}\leq((1+\delta^{-1})^{3/2}2^{k-\lfloor k/d\rfloor}\n{g}_{\mathrm{TV}}2^{-k}\var(Y))^{2/3}\leq(1+\delta^{-1})2^{-2\lfloor k/d\rfloor/3}(\n{g}_{\mathrm{TV}}\var(Y))^{2/3}.\label{eq:sum V_j}
\end{align}

Finally, inserting \eqref{eq:uj} and \eqref{eq:sum V_j} into \eqref{eq:construction}
yields 
\begin{align*}
\E[(Y-M_{k})^{2}] & \leq(1+\delta)\,\E[(Y-g(\bX))^{2}]+(1+\delta^{-1})2^{-2\lfloor k/d\rfloor/3}(\n{g}_{\mathrm{TV}}\var(Y))^{2/3}.
\end{align*}
This proves \eqref{eq:toprove} and hence concludes the proof. 
\end{proof}
\begin{proof}[Proof of Remark \ref{rem:better bound}]
In the proof of Theorem \ref{thm:minimax exp decay 2}, the bound \eqref{eq:varub} can be improved to 
\[
\var(Y)^{1/2}\leq\E[(Y-g(\bX))^{2}]^{1/2}+\frac{1}{2}\Delta g.
\]
Inserting the following inequality (which follows from $(a+b)^{p}\leq2^{p-1}(a^{p}+b^{p})$
for $a,b\geq0$ and $p\geq1$) 
\[
\var(Y)^{2/3}\leq2^{1/3}\E[(Y-g(\bX))^{2}]^{2/3}+2^{-2/3}(\Delta g)^{4/3}
\]
into \eqref{eq:toprove} yields \eqref{eq:better bound}. 
\end{proof}
\begin{proof}[Proof of Theorem \ref{thm:atomic}] Recall that $\Delta Y:=\sup\supp Y-\inf\supp Y$. 
The only difference from the proof of Theorem \ref{thm:minimax exp decay 2}
is \eqref{eq:maxvar1}, since for non-marginally atomless measures
\eqref{eq:1/2var} and \eqref{eq:2-k} may not hold. Instead, by our
assumptions, \eqref{eq:1/2var} is replaced by 
\begin{align}
\max\Big\{\varphi_{L}(\hat{x}_{j}),\varphi_{R}(\hat{x}_{j})\Big\}\leq\frac{1}{2}\E[(Y-\E[Y\mid\bX\in A])^{2}\bone_{\{\bX\in A\}}]+\frac{(\Delta Y)^{2}}{N}.\label{eq:2+}
\end{align}
To see this, let $B$ and $C$ be disjoint events with $\p(C)=m$, and suppose that
$Y\in[a,b]$ almost surely, so $\Delta Y=b-a$. We let $\mu_B:=\E[Y\mid B]$. Since the conditional mean minimizes squared loss,
\begin{align*}
    \p(B\cup C)\Var(Y\mid B\cup C)
&\le
\E[(Y-\mu_B)^2\bone_{B\cup C}]\\
&=
\p(B)\Var(Y\mid B)+\E[(Y-\mu_B)^2\bone_C]
\le
\p(B)\Var(Y\mid B)+m(\Delta Y)^2.
\end{align*}
Therefore, adding an atom of weight $\leq1/N$ to a node increases the risk by at most $(\Delta Y)^{2}/N$.

As a consequence of \eqref{eq:2+}, if we denote
by 
\begin{align*}
u_{k}:=\max_{A\in\pi_{k}}\E[(Y-\E[Y\mid\bX\in A])^{2}\bone_{\{\bX\in A\}}],
\end{align*}
then $u_{k}$ satisfies the recursive inequalities 
\begin{align*}
u_{k+1}\leq\frac{u_{k}}{2}+\frac{(\Delta Y)^{2}}{N},~k\geq0;\quad u_{0}=\var(Y).
\end{align*}
To solve this, let $v_{k}:=u_{k}-2(\Delta Y)^{2}/N$. Suppose that
$u_{0}\geq2(\Delta Y)^{2}/N$. Then $v_{k}$ satisfies $v_{k+1}\leq v_{k}/2$
and $v_{0}\leq\var(Y)$. It follows that $v_{k}\leq2^{-k}\var(Y)$
for all $k\geq0$, and hence the maximum risk among the nodes at level
$k$ is 
\begin{align}
\max_{A\in\pi_{k}}\E[(Y-\E[Y\mid\bX\in A])^{2}\bone_{\{\bX\in A\}}]=u_{k}\leq2^{-k}\var(Y)+\frac{2(\Delta Y)^{2}}{N}.\label{eq:2}
\end{align}
It is also easy to check that \eqref{eq:2} is also satisfied for
the case $u_{0}<2(\Delta Y)^{2}/N$, that is, \eqref{eq:2} holds
in general. Replacing \eqref{eq:maxvar1} by \eqref{eq:2} leads to
\begin{align*}
\E[(Y-M_{k})^{2}]\leq\inf_{g\in\G}\bigg((1+\delta)\,\E[(Y-g(\bX))^{2}]+(1+\delta^{-1})2^{-2\lfloor k/d\rfloor/3}\Big(\n{g}_{\mathrm{TV}}(\var(Y)+2^{k+1}\frac{(\Delta Y)^{2}}{N})\Big)^{2/3}\bigg).
\end{align*}
Applying concavity of the function $x\mapsto x^{2/3}$ for $x>0$,
\eqref{eq:log ineq}, and that 
\begin{align*}
\n{g}_{\mathrm{TV}}^{2/3}\Big(2^{k+1}\frac{(\Delta Y)^{2}}{N}\Big)^{2/3}\leq\frac{\n{g}_{\mathrm{TV}}^{2}}{3}+\frac{2^{k+2}}{3}\frac{(\Delta Y)^{2}}{N},
\end{align*}
we obtain \eqref{eq:atomic}.
\end{proof}
\begin{proof}[Proof of Theorem \ref{thm:oracle}]

Note that if $\bX_{*}$ is marginally atomless, the assumption \eqref{eq:1/N}
for the joint distribution $(\bX,Y)$ in Theorem \ref{thm:atomic}
holds $\p_{*}$-almost surely. Consequently, the proof is almost verbatim
compared to Theorem 4.3 of \citet{klusowski2024large}, by applying
our Theorem \ref{thm:atomic} instead of Theorem 4.2 therein. The
only difference here is the extra term 
\begin{align*}
(1+\delta^{-1})2^{-2\lfloor k/d\rfloor/3}\frac{2^{k+2}(\Delta Y)^{2}}{3N}
\end{align*}
appearing in \eqref{eq:atomic}, where we remind the reader that $\Delta Y=\sup\supp Y-\inf\supp Y$.
Nevertheless, since the proof of Theorem 4.3 of \citet{klusowski2024large}
deals first with the case with bounded data, we may simply replace
$\Delta Y$ by that bound. Specifically, the first part of the proof
of Theorem 4.3 of \citet{klusowski2024large} assumes $\max_{i}|Y_{i}|\leq U$.
We may set $\Delta Y=2U$ and add the term 
\begin{align*}
(1+\delta^{-1})2^{-2\lfloor k/d\rfloor/3}\frac{2^{k+2}(\Delta Y)^{2}}{3N}\leq\frac{2^{k+5}U^{2}}{3N}
\end{align*}
in (B.28) therein (where we used $\delta\geq2^{-2\lfloor k/d\rfloor/3}$).
This extra term is carried until (B.34) therein, where it can be absorbed
by the term $U^{4}2^{k}\log(Nd)/N$ therein. The rest of the proof
remains unchanged. 
\end{proof}

\subsection{Proofs of results in Section \texorpdfstring{\ref{sec:RF}}{}}
Before we formally prove Theorem \ref{thm:oracle forest}, we point
out a key ingredient that distinguishes it from previous proofs where
only a single decision tree is involved. 
Recall that $n$ is the number of
trees in the forest, $\Sigma$ is the law of the splitting
dimensions, and $\E_{n}$ denotes the expectation with respect to the
empirical measure on $n$ samples from $\Sigma$. 
In Theorem \ref{thm:minimax exp decay 2},
the cyclicity of the cyclic MinimaxSplit algorithm is essential to
proving \eqref{eq:d dim TV}. However, a stronger upper bound for
\[
\sum_{j\in J_{k}}\Delta g_{j}=\sum_{A\in\pi_{k}}\Big(\sup_{\bx\in A}g(\bx)-\inf_{\bx\in A}g(\bx)\Big)
\]
exists by counting the (random) number of splits in each dimension,
which is generally of the form 
\begin{align}
\sum_{A\in\pi_{k}}\Big(\sup_{\bx\in A}g(\bx)-\inf_{\bx\in A}g(\bx)\Big)\leq\sum_{i=1}^{d}\xi_{k,i}\n{g_{i}}_{\mathrm{TV}},\label{eq:sik}
\end{align}
for some (random) positive integers $\xi_{k,i},~i\in[d]$ depending on the samples
from $\Sigma$. {See also \eqref{eq:2s} below}. The intuition in
proving Theorem \ref{thm:oracle forest} is that we keep \eqref{eq:sik}
as an upper bound, instead of \eqref{eq:d dim TV}. Then, with enough
randomness injected into the splitting dimensions (and hence the random
variables $\{\xi_{k,i}\}$), the right-hand side of \eqref{eq:sik} is small if averaged over all
trees in the forest, as shown by the next lemma.

\begin{lemma}\label{lemma:recursion} Fix $d\geq2$. Consider a stochastic
recursion defined as follows. Let $(\xi_{0,1},\dots,\xi_{0,d})=(1,\dots,1)$
and for each $k\geq1$, given $(\xi_{k-1,1},\dots,\xi_{k-1,d})$,
we define $(\xi_{k,1},\dots,\xi_{k,d})$ through the following procedure:
\begin{itemize} 

\item sample independent copies $({\xi}'_{k-1,1},\dots,{\xi}'_{k-1,d})$
and $({\xi}''_{k-1,1},\dots,{\xi}''_{k-1,d})$ of $(\xi_{k-1,1},\dots,\xi_{k-1,d})$; 

\item uniformly sample $\zeta_{k}\in[d]$; 

\item define $\xi_{k,\zeta_{k}}=\max\{\xi'_{k-1,\zeta_{k}},{\xi}''_{k-1,\zeta_{k}}\}$; 

\item for $j\in[d]$ and $j\neq\zeta_{k}$, define $\xi_{k,j}=\xi'_{k-1,j}+{\xi}''_{k-1,j}$.
\end{itemize} Then $\E[\xi_{k,j}]\leq2^{k}e^{-k/(4d)}$ for all $j\in[d]$.
\end{lemma}
\begin{rem}
The rate $e^{-1/(4d)}$ is not tight. Finding the tight exponent appears
to be a non-trivial task, even for $d=2$. 
\end{rem}

\begin{proof}[Proof of Lemma \ref{lemma:recursion}]
By construction, for each fixed $k\geq0$, the law of each vector
$(\xi_{k,1},\dots,\xi_{k,d})$ is exchangeable. As a consequence,
$\xi_{k,j}\dd W_{k}$ for all $j\in[d]$, where $\{W_{k}\}_{k\geq0}$ is defined as follows:
$W_{0}=1$, and for each $k\geq1$, we take independent copies ${W}'_{k-1},W''_{k-1}$
of $W_{k-1}$ and define 
\begin{align*}
W_{k}=\begin{cases}
{W}'_{k-1}+{W}''_{k-1} & \text{ with probability }1-1/d;\\
\max\{{W}'_{k-1},{W}''_{k-1}\} & \text{ with probability }1/d.
\end{cases}
\end{align*}
It remains to find $\E[W_{k}]$. It follows from construction that
$\E[(W_{0})^{2}]=1$ and for each $k\geq1$, 
\begin{align*}
\E[(W_{k})^{2}] & =(1-\frac{1}{d})\E\Big[\Big({W}'_{k-1}+{W}''_{k-1}\Big)^{2}\Big]+\frac{1}{d}\E\Big[\Big(\max\{{W}'_{k-1},{W}''_{k-1}\}\Big)^{2}\Big]\\
 & \leq4(1-\frac{1}{d})\E[(W_{k-1})^{2}]+\frac{2}{d}\E[(W_{k-1})^{2}]\\
 & =(4-\frac{2}{d})\E[(W_{k-1})^{2}].
\end{align*}
where we have used the inequality $(a+b)^{2}\leq2(a^{2}+b^{2})$.
It follows that 
\[
\E[W_{k}]\leq\sqrt{\E[(W_{k})^{2}]}\leq(4-\frac{2}{d})^{k/2}\leq2^{k}e^{-k/(4d)},
\]
as desired. 
\end{proof}
\begin{proof}[Proof of Theorem \ref{thm:oracle forest}]
In the following, we use $S$ to denote a random variable with law
$\Sigma$. We first establish an upper bound for 
\[
\E[(Y-\E_{n}[M_{k}^{S}])^{2}].
\]
Since the randomness from $\E_{n}$ is independent from that of $(\bX,Y)$,
we have by Jensen's inequality, 
\begin{align}
\E[(Y-\E_{n}[M_{k}^{S}])^{2}]\leq\E[\E_{n}[\E[(Y-M_{k}^{S})^{2}]]].\label{eq:jensen}
\end{align}
Here, the outer expectation is with respect to the randomness from
the empirical measure $\E_{n}$ (the law of $(\bX,Y)$ is fixed in
the settings of Theorems \ref{thm:minimax exp decay 2} and \ref{thm:atomic}),
and the inner expectation is with respect to the joint law of $(\bX,Y,\{M_{k}^{S}\})$.
In other words, we trivially bound the risk of the average by the
average of the risks among the $n$ trees.

Denote the empirical samples from $\Sigma$ by  $S_{1},\dots,S_{n}$.
Our next goal is to control 
\[
\E_{n}[\E[(Y-M_{k}^{S})^{2}]]=\frac{1}{n}\sum_{\ell=1}^{n}\E[(Y-M_{k}^{S_{\ell}})^{2}].
\]
We apply the same decomposition as in step II of the proof of Theorem
\ref{thm:minimax exp decay 2}, with the specific choice of $\delta=1$.
For $j\in J_{k}$, we let $g_{j}^{\ell}=g|_{I_{j}^{\ell}}$, where
$\{I_{j}^{\ell}\}_{j\in J_{k}}$ is the partition at level $k$ of
a tree whose splitting rule is given by $S_{\ell}$. Then \eqref{eq:v/p}
now has the form 
\[
\sqrt{\frac{V_{j}^{\ell}}{p_{j}^{\ell}}}\leq2^{3/2}\Delta g_{j}^{\ell}.
\]
Proceeding as in step III of the proof of Theorem \ref{thm:minimax exp decay 2},
\eqref{eq:sum V_j} becomes 
\begin{align}
\sum_{j\in J_{k}^{\ell}}V_{j}^{\ell}\leq\Big(2^{-k}\var(Y)2^{3/2}\sum_{j\in J_{k}^{\ell}}\Delta g_{j}^{\ell}\Big)^{2/3},\label{eq:sum V_j 2}
\end{align}
and hence for all $g\in\G$ with the representation $g=g_{1}+\dots+g_{d}$,
\begin{align}
\E_{n}[\E[(Y-M_{k}^{S})^{2}]]\leq2\E[(Y-g(\bX))^{2}]+2\var(Y)^{2/3}2^{-2k/3}\frac{1}{n}\sum_{\ell=1}^{n}\Big(\sum_{j\in J_{k}^{\ell}}\Delta g_{j}^{\ell}\Big)^{2/3}.\label{eq:EnE}
\end{align}
Next, we construct the (random) values $\{\xi_{k,i}^{\ell}\}_{\ell\in[n],\,i\in[d]}$
such that for each $k\in\N$ and $\ell\in[n]$, 
\begin{align}
\sum_{j\in J_{k}^{\ell}}\Delta g_{j}^{\ell}\lst\sum_{i=1}^{d}\xi_{k,i}^{\ell}\n{g_{i}}_{\mathrm{TV}},\label{eq:2s}
\end{align}
where $\lst$ denotes stochastic dominance. Define $\{\xi_{k,i}^{\ell}\}_{\ell\in[n],\,i\in[d]}$
as follows: 
\begin{itemize}
\item $\{\xi_{k,i}^{\ell}\}_{i\in[d]},\,\ell\in[n]$ are i.i.d.~copies of $\{\xi_{k,i}\}_{i\in[d]}$. 
\item $\xi_{0,i}=1$ for all $i\in[d]$. 
\item For $k\geq1$, given the law of $\{\xi_{k-1,i}\}_{i\in[d]}$, generate
two i.i.d.~copies $\{\xi'_{k-1,i}\}_{i\in[d]}$ and $\{\xi''_{k-1,i}\}_{i\in[d]}$,
independently sample a uniform $\zeta_{k}\in[d]$, and define 
\[
\xi_{k,\zeta_{k}}=\max\{\xi'_{k-1,\zeta_{k}},\xi''_{k-1,\zeta_{k}}\};\quad\xi_{k,i}=\xi'_{k-1,i}+\xi''_{k-1,i},~i\neq\zeta_{k}.
\]
\end{itemize}
See Figure \ref{fig:boxes} for an illustration
of the first few values of $\{\xi_{k,i}\}$.

\begin{figure}[h!]
\centering \includegraphics[width=0.7\linewidth]{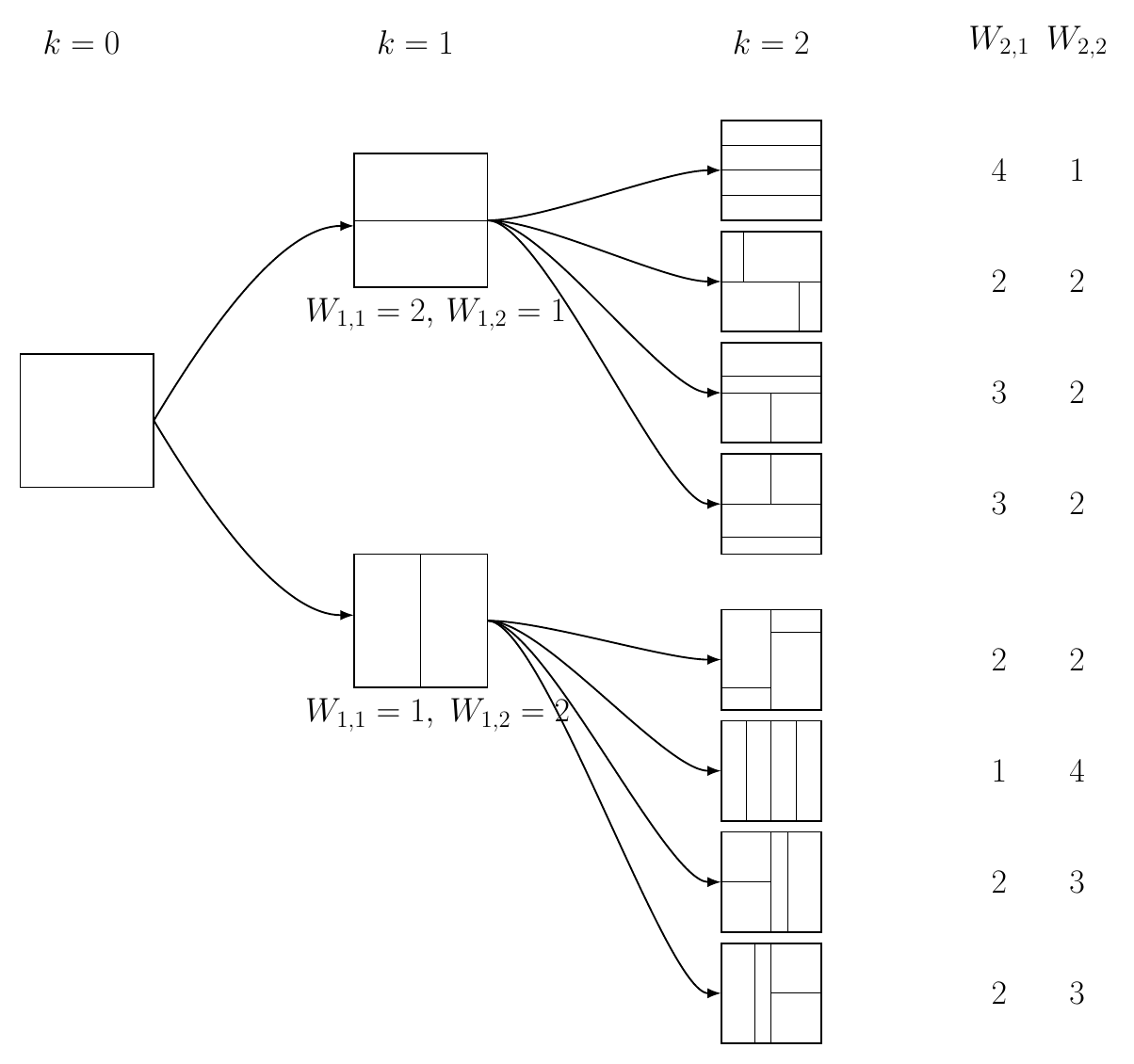}
\caption{Showcasing the possibilities of the splits for levels
$k=0,1,2$, with $d=2$. }
\label{fig:boxes}
\end{figure}

To see that this definition yields \eqref{eq:2s}, we apply induction. The base case $k=0$ is
easy as $\sup g-\inf g\leq\sum_{i=1}^{d}\n{g_{i}}_{\mathrm{TV}}$.
Suppose that \eqref{eq:2s} holds with $k-1$. Then given a tree of
depth $k$ (where we recall that the split dimensions of all nodes
are i.i.d.~uniform in $[d]$), we condition on the split dimension
at level zero (node $\emptyset$, say into child nodes $u,v$),
which we denote by $\zeta_{k}$. Next, we decompose each variation
$\Delta g_{j}^{\ell}$ in the $d$ dimensions by $\Delta g_{j}^{\ell}\leq\sum_{i=1}^{d}\Delta g_{j,i}^{\ell}$,
where $\Delta g_{j,i}^{\ell}$ is the variation of the $i$-th coordinate
of $g$ in $I_{j}$ for sample $\ell$.
\begin{itemize}
\item For a dimension $i\neq\zeta_{k}$, we apply the trivial bound that
the variations over dimension $i$ for $j\in J_{k}^{\ell}$ is at
most the sum of the variations among the descendants of $u$ and of
$v$. By induction hypothesis, we have 
\begin{align*}
\sum_{j\in J_{k}^{\ell}}\Delta g_{j,i}^{\ell} & \leq\sum_{\substack{j\in J_{k}^{\ell}\\
j\succ u
}
}\Delta g_{j,i}^{\ell}+\sum_{\substack{j\in J_{k}^{\ell}\\
j\succ v
}
}\Delta g_{j,i}^{\ell}\\
 & \lst\xi'_{k-1,i}\n{g_{i}}_{\mathrm{TV}}+\xi''_{k-1,i}\n{g_{i}}_{\mathrm{TV}}=\xi_{k,i}\n{g_{i}}_{\mathrm{TV}},
\end{align*}
where $j\succ u$ means that the set $I_{j}$ is a descendant of $u$
in the binary subtree representation.
\item For dimension $\zeta_{k}$, we again use the fact that the variation
$\sum_{j\in J_{k}^{\ell}}\Delta g_{j,\zeta_{k}}^{\ell}$ is at most the sum
of variations among the descendants of $u,v$, but the total variation
$\n{g_{\zeta_{k}}}_{\mathrm{TV}}$ is already decomposed into two,
say $\n{g_{\zeta_{k}}^{(i)}}_{\mathrm{TV}},\,i=1,2$, corresponding
to $u,v$. Therefore, by inductive hypothesis, 
\begin{align*}
\sum_{j\in J_{k}^{\ell}}\Delta g_{j,\zeta_{k}}^{\ell} & \leq\sum_{\substack{j\in J_{k}^{\ell}\\
j\succ u
}
}\Delta g_{j,\zeta_{k}}^{\ell}+\sum_{\substack{j\in J_{k}^{\ell}\\
j\succ v
}
}\Delta g_{j,\zeta_{k}}^{\ell}\\
 & \lst\xi'_{k-1,{\zeta_{k}}}\n{g_{\zeta_{k}}^{(1)}}_{\mathrm{TV}}+\xi''_{k-1,{\zeta_{k}}}\n{g_{\zeta_{k}}^{(2)}}_{\mathrm{TV}}\\
 & \leq\max\{\xi'_{k-1,{\zeta_{k}}},\xi''_{k-1,{\zeta_{k}}}\}\n{g_{\zeta_{k}}}_{\mathrm{TV}}\\
 & =\xi_{k,\zeta_{k}}\n{g_{\zeta_{k}}}_{\mathrm{TV}}.
\end{align*}
\end{itemize}
This proves \eqref{eq:2s}. 

Applying \eqref{eq:2s} and Hölder's inequality, we obtain 
\begin{align*}
\frac{1}{n}\sum_{\ell=1}^{n}\Big(\sum_{j\in J_{k}^{\ell}}\Delta g_{j}^{\ell}\Big)^{2/3} & \lst\frac{1}{n}\sum_{\ell=1}^{n}\Big(\sum_{i=1}^{d}\xi_{k,i}^{\ell}\n{g_{i}}_{\mathrm{TV}}\Big)^{2/3}\\
 & \leq2^{2k/3}\Big(\frac{1}{n}\sum_{\ell=1}^{n}\sum_{i=1}^{d}2^{-k}\xi_{k,i}^{\ell}\n{g_{i}}_{\mathrm{TV}}\Big)^{2/3}\\
 & \leq2^{2k/3}\Big(\frac{1}{n}\sum_{i=1}^{d}\n{g_{i}}_{\mathrm{TV}}\max_{i\in[d]}\sum_{\ell=1}^{n}2^{-k}\xi_{k,i}^{\ell}\Big)^{2/3}\\
 & \leq2^{2k/3}\n{g}_{\mathrm{TV}}^{2/3}\Big(\frac{1}{n}\max_{i\in[d]}\sum_{\ell=1}^{n}2^{-k}\xi_{k,i}^{\ell}\Big)^{2/3},
\end{align*}
where in the last step we recall that for $g\in\G$, $\n{g}_{\mathrm{TV}}=\sum_{i=1}^{d}\n{g_{i}}_{\mathrm{TV}}$.
The next step is to bound from above the inner maximum, where we recall
that under the outer expectation (as $S_{1},\dots,S_{n}$ are i.i.d.
sampled from $\Sigma$), the random variables $\{\xi_{k,i}^{\ell}\}_{\ell\in[n],i\in[d]}$
are i.i.d.~in $\ell$. Employing Lemma \ref{lemma:recursion}, we
have for each $i\in[d]$ that 
\begin{align*}
\E\bigg[\sum_{\ell=1}^{n}2^{-k}\xi_{k,i}^{\ell}\bigg] & =n2^{-k}\E[\xi_{k,1}^{\ell}]\leq ne^{-\frac{k}{4d}}.
\end{align*}
By the union bound and the Hoeffding inequality (note that $2^{-k}\xi_{k,i}^{\ell}\in[0,1]$),
we have for $u\geq0$ that 
\begin{align*}
\p\Big(\max_{i\in[d]}\frac{1}{n}\sum_{\ell=1}^{n}2^{-k}\xi_{k,i}^{\ell}\geq e^{-\frac{k}{4d}}+u\Big) & \leq d\,\p\Big(\frac{1}{n}\sum_{\ell=1}^{n}2^{-k}\xi_{k,i}^{\ell}\geq e^{-\frac{k}{4d}}+u\Big)\\
 & \leq d\,\p\Big(\sum_{\ell=1}^{n}2^{-k}\xi_{k,i}^{\ell}\geq\E\Big[\sum_{\ell=1}^{n}2^{-k}\xi_{k,i}^{\ell}\Big]+nu\Big)\leq de^{-2nu^{2}}.
\end{align*}
Therefore, a standard computation yields 
\begin{align*}
\E\Big[\Big(\max_{i\in[d]}\frac{1}{n}\sum_{\ell=1}^{n}2^{-k}\xi_{k,i}^{\ell}\Big)^{2/3}\Big]\leq e^{-\frac{k}{6d}}+\frac{d}{n^{1/3}}.
\end{align*}
Inserting into \eqref{eq:EnE} and taking expectation, we see that
for all $g\in\G$, 
\begin{align*}
\E[\E_{n}[\E[(Y-M_{k}^{S})^{2}]]]\leq2\E[(Y-g(\bX))^{2}]+2\var(Y)^{2/3}\n{g}_{\mathrm{TV}}^{2/3}\Big(e^{-\frac{k}{6d}}+\frac{d}{n^{1/3}}\Big).
\end{align*}
Using \eqref{eq:jensen} and \eqref{eq:log ineq}, we obtain that
for all $g\in\G$, 
\begin{align*}
\E[(Y-\E_{n}[M_{k}^{S}])^{2}]\leq2\E[(Y-g(\bX))^{2}]+2\Big(\frac{4}{3}\E[(Y-g(\bX))^{2}]+\n{g}_{\mathrm{TV}}^{2}\Big)\Big(e^{-\frac{k}{6d}}+\frac{d}{n^{1/3}}\Big).
\end{align*}
Following the same lines as the proof of Theorem \ref{thm:atomic},
we obtain that in the presence of marginal atoms of weights at most
$1/N$, it holds for all $g\in\G$, 
\begin{align*}
\E[(Y-\E_{n}[M_{k}^{S}])^{2}]\leq2\E[(Y-g(\bX))^{2}]+\frac{8}{3}\Big(\E[(Y-g(\bX))^{2}]+\n{g}_{\mathrm{TV}}^{2}+\frac{2^{k}M^{2}}{N}\Big)\Big(e^{-\frac{k}{6d}}+\frac{d}{n^{1/3}}\Big).
\end{align*}
Finally, we proceed with the arguments in proving Theorem \ref{thm:oracle}.
Again we follow the arguments in the proof of Theorem 4.3 for CART
of \citet{klusowski2024large}. By setting $M=2U$ where $U=\max|Y_{i}|$,
the extra term of 
\[
\frac{8}{3}\frac{2^{k}M^{2}}{N}\Big(e^{-\frac{k}{6d}}+\frac{d}{n^{1/3}}\Big)\leq C\Big(\frac{2^{k}U^{2}}{N}+\frac{2^{k}U^{2}d}{Nn^{1/3}}\Big)
\]
\sloppy is carried until (B.34) of \citet{klusowski2024large}. The
term $2^{k}U^{2}/N$ is absorbed by the term $U^{4}2^{k}\log(Nd)/N$
therein. The other term $2^{k}U^{2}d/(Nn^{1/3})$ is carried into
(B.35), and is bounded by $C2^{k}(\log N)d/(Nn^{1/3})$ under the
choice of $U=\n{g_{*}}_{\infty}+2\sigma\sqrt{2\log N}$ at the end
of that proof, where $C>0$ may depend on $\n{g_{*}}_{\infty}$ and
$\sigma$. This concludes \eqref{eq:oracle forest}. 
\end{proof}


To prove Theorem~\ref{thm:random-minimax-visible}, we first record the expected oscillation estimate that replaces the deterministic cyclic counting
argument.

\begin{lemma} 
\label{lem:visible-oscillation}
Assume the setting of Definition~\ref{def:positive-visibility}, and let
\[
g(\bx)=\sum_{j\in S} g_j(x_j)\in \mathcal G_S.
\]
Then for every $k\ge 0$,
\[
\E\left[
\sum_{A\in \pi_k^\Xi}
\left(
\sup_{\bx\in A} g(\bx)-\inf_{\bx\in A} g(\bx)
\right)
\right]
\le
2^k e^{-\rho k/2}\sum_{j\in S}\|g_j\|_{\mathrm{TV}}.
\]
Consequently,
\[
\E\left[
\sum_{A\in \pi_k^\Xi}
\left(
\sup_{\bx\in A} g(\bx)-\inf_{\bx\in A} g(\bx)
\right)
\right]
\le
2^k e^{-\rho k/2}\|g\|_{\mathrm{TV}}.
\]
\end{lemma}

\begin{proof}Recall the notation from Lemma~\ref{lem:cyclic_oscillation_bound}. 
For $j\in S$, define
\[
T_j^\Xi(k):=
\sum_{A\in \pi_k^\Xi}\osc_{I_j(A)}(g_j).
\]
As in the proof of Lemma~\ref{lem:cyclic_oscillation_bound},
\[
\sup_A g-\inf_A g\le \sum_{j\in S}\osc_{I_j(A)}(g_j),
\]
so it suffices to bound $\E[T_j^\Xi(k)]$.

Fix $j\in S$. Conditional on $\pi_k^\Xi$, each splittable cell $A\in \pi_k^\Xi$ is split along
coordinate $j$ with conditional probability at least $\rho$. If $A$ is split along coordinate $j$,
then the sum of the oscillations of $g_j$ over the two children is at most $\osc_{I_j(A)}(g_j)$.
If $A$ is split along a coordinate different from $j$, then the $j$th interval is duplicated, so the
sum of the oscillations over the two children is exactly $2\osc_{I_j(A)}(g_j)$.

Therefore,
\[
\E\!\left[T_j^\Xi(k+1)\,\middle|\,\pi_k^\Xi\right]
\le
\bigl(\rho\cdot 1+(1-\rho)\cdot 2\bigr)T_j^\Xi(k)
=
(2-\rho)T_j^\Xi(k).
\]
Taking expectations and iterating,
\[
\E[T_j^\Xi(k)]\le (2-\rho)^k T_j^\Xi(0).
\]
Since $2-\rho = 2(1-\rho/2)\le 2e^{-\rho/2}$ and $T_j^\Xi(0)\le \|g_j\|_{\mathrm{TV}}$, we obtain
\[
\E[T_j^\Xi(k)]\le 2^k e^{-\rho k/2}\|g_j\|_{\mathrm{TV}}.
\]
Summing over $j\in S$ proves the first claim, and taking the infimum over additive representations
proves the second.
\end{proof}

\begin{proof}[Proof of Theorem~\ref{thm:random-minimax-visible}]
Fix a realization of the randomization $\Xi$. Conditional on $\Xi$, the tree is deterministic, and
the part of the proof of Theorem \ref{thm:minimax exp decay 2} that leads to \eqref{eq:toprove} goes through verbatim, with a suitable adaptation of the upper bound \eqref{eq:d dim TV} for the oscillation term.
More precisely, for every $g\in \mathcal G_S$, the conditional predictor $M_k^\Xi$ satisfies
\begin{align}
\E\!\left[(Y-M_k^\Xi)^2\,\middle|\,\Xi\right]
\le\,
&
(1+\delta)
\E[(Y-g(\bX))^2]
+
(1+\delta^{-1})
(\Var(Y)\Theta_k(\Xi;g))^{2/3},\label{eq:cond bound}
\end{align}
where
\[
\Theta_k(\Xi;g):=
2^{-k}
\sum_{A\in \pi_k^\Xi}
\left(
\sup_{\bx\in A} g(\bx)-\inf_{\bx\in A} g(\bx)
\right).
\]

Taking expectations over $\Xi$ and using Jensen's inequality for the concave map
$x\mapsto x^{2/3}$ yields
\[
\E\!\left[\Theta_k(\Xi;g)^{2/3}\right]
\le
\left(\E[\Theta_k(\Xi;g)]\right)^{2/3}.
\]
By Lemma~\ref{lem:visible-oscillation},
\[
\E[\Theta_k(\Xi;g)]
\le
2^{-k}\cdot 2^k e^{-\rho k/2}\|g\|_{\mathrm{TV}}
=
e^{-\rho k/2}\|g\|_{\mathrm{TV}}.
\]
Hence
\[
\E\!\left[\Theta_k(\Xi;g)^{2/3}\right]
\le
e^{-\rho k/3}\|g\|_{\mathrm{TV}}^{2/3}.
\]
Substituting this estimate into \eqref{eq:cond bound} and then following step I of the proof of Theorem~\ref{thm:minimax exp decay 2} gives \eqref{eq:visible-random-rate}.
\end{proof}

\subsection{Proofs of results in Section \ref{sec:Minimax-classify}}

The analysis in the classification setting also relies on two monotonicity envelopes for the conditional
probability. Recall \eqref{eq:Hh}. Lemma~\ref{lemma:continuous-1} provides left and right
bounds for the probability of an order event inside a cell in terms
of the functions 
\begin{align*}
    &\varphi_{L}(x)\coloneqq \mathbb{P}(\bX\in A,\,X_{j}< x)\,h \left(\mathbb{P}(Y=1\mid \bX\in A,\,X_{j}< x)\right);\\
    &\varphi_{R}(x)\coloneqq \mathbb{P}(\bX\in A,\,X_{j}\ge x)\,h \left(\mathbb{P}(Y=1\mid \bX\in A,\,X_{j}\ge x)\right),
\end{align*}
which are nondecreasing and nonincreasing, respectively. Lemma~\ref{lemma:entropy bound}
further provides an information inequality that
upper bounds the entropy. These two lemmas are the entropic analogues of the unconditional variance 
contraction used in the regression section; they are the technical
bridges that allow us to carry over the exponential
decay arguments from squared loss to cross-entropy.

\begin{lemma}\label{lemma:continuous-1} Suppose that $\bX$ is marginally
atomless and $j\in[d]$. Then the function $\varphi_L$ 
 is non-decreasing and continuous, and $\varphi_R$ 
is non-increasing and continuous. Moreover, denote by $\pi_{k}=\{I_{j}\}_{j\in J_{k}}$ the partition of $\R^{d}$
formed at level $k$ from the cyclic MinimaxSplit construction. Then 
$$\max_{j\in J_{k}}\E[\log(1+e^{-YM_{k}})\bone_{\{\bX\in I_{j}\}}]\leq2^{-k}\E[\log(1+e^{-YM_{0}})]=2^{-k}H(Y).$$
\end{lemma}

\begin{proof}
To see the monotonicity of $\varphi_L$, we take arbitrary $a<b$. Since the conditional entropy is bounded from above by the entropy, we have
\begin{align*}
    h \left(\mathbb{P}(Y=1\mid \bX\in A,\,X_{j}< b)\right)\geq \p(X_j<a\mid \bX\in A,\,X_{j}< b)\,h \left(\mathbb{P}(Y=1\mid \bX\in A,\,X_{j}< a)\right).
\end{align*}
Rearranging yields that $\varphi_L(a)\leq\varphi_L(b)$.
The continuity of $\varphi_L$ and $\varphi_R$ follows from the marginally atomless condition.
The final claim follows in the same way as the derivation of \eqref{eq:2-k}.
\end{proof}

\begin{lemma}\label{lemma:entropy bound} Let $Y$ be a $\{\pm 1\}$-valued random variable. For any joint distribution
$(X,Y)$, it holds that 
\begin{align}
H(Y)\leq\E[\log(1+e^{-XY})]+\frac{1}{4}|\sup\supp X-\inf\supp X|.\label{eq:var bound-1}
\end{align}
\end{lemma} 
\begin{proof}
We may assume that $\sup\supp X-\inf\supp X
<\infty$, otherwise the claim is trivial. 
Given the marginal law of $X$, the right-hand side of \eqref{eq:var bound-1}
is minimized if $X$ and $Y$ are comonotonic. Moreover, with $\sup\supp X$ and $\inf\supp X
$ fixed, the right-hand side of \eqref{eq:var bound-1}
is minimized when we have the equivalence of events $\{X=\sup\supp X\}=\{Y=1\}$ and $\{X=\inf\supp X\}=\{Y=-1\}$. This is achieved by setting, with $a=\inf\supp X\leq\sup\supp X=b$, that $p=\p(X=b,Y=1)=1-\p(X=a,Y=-1)$, for some $p\in[0,1]$. It remains to show the deterministic inequality
\[
\kappa_{a,b}(p):=-h(p)+(p\log(1+e^{-b})+(1-p)\log(1+e^{a}))+\frac{1}{4}|b-a|\geq 0.
\]
Note that $\kappa_{a,b}$ is convex in $p$ and $\kappa_{a,b}'(p)=0$ if and only if 
\[
\log\Big(\frac{1-p}{p}\Big)=\log\Big(\frac{1+e^{-b}}{1+e^{a}}\Big),
\]
or $p=(1+e^{a})/((1+e^{a})+(1+e^{-b}))$. It remains to prove 
\[
\log\Big(\frac{(1+e^{a})(1+e^{-b})}{(1+e^{a})+(1+e^{-b})}\Big)\geq-\frac{1}{4}|b-a|,
\]
or equivalently, 
\begin{align}
    \log((1+e^{a})^{-1}+(1+e^{-b})^{-1})\leq\frac{1}{4}|b-a|.\label{eq:ab~}
\end{align}
With $D:=b-a>0$ fixed, the left-hand side of \eqref{eq:ab~} 
is maximized at $a=-D/2$ and $b=D/2$ by elementary calculus. 
On the other hand, it is easy to verify by elementary calculus that $\log(2/(1+e^{-D/2}))\leq D/4=|b-a|/4$. 
This proves \eqref{eq:ab~} and hence completes the proof.
\end{proof}

\begin{proof}[Proof of Theorem \ref{thm:minimax exp decay 2-1}]
We first recall the notations from the proof of Theorem \ref{thm:minimax exp decay 2}. 
Denote by $\pi_{k}=\{I_{j}\}_{j\in J_{k}}$ the partition of $\R^{d}$
formed at level $k$ from the cyclic MinimaxSplit construction. Note that $M_{k}\bone_{\{\bX\in I_{j}\}}=y_{k,j}\bone_{\{\bX\in I_{j}\}}$, where $y_{k,j}=\log(\p(Y=1\mid\bX\in I_{j})/\p(Y=-1\mid\bX\in I_{j}))$.
Define $g_{j}=g|_{I_{j}}$ and recall that $\Delta g_{j}=\sup g_{j}-\inf g_{j}\leq\n{g_{j}}_{\mathrm{TV}}$.

By applying Lemma \ref{lemma:entropy bound} to the joint distribution
$(g(\bX),Y)\mid\bX\in I_{j}$, decompose 
\begin{align}
\E[\log(1+e^{-YM_{k}})\bone_{\{\bX\in I_{j}\}}]=\E[\log(1+e^{-Yy_{k,j}})\bone_{\{\bX\in I_{j}\}}]\leq U_{j}+V_{j},\label{eq:uv-1}
\end{align}
 where $U_{j}=\E[\log(1+e^{-Yg(\bX)})\bone_{\{\bX\in I_{j}\}}]$ and
\[
V_{j}=\min\Big\{\E[\log(1+e^{-YM_{k}})\bone_{\{\bX\in I_{j}\}}],\frac{p_{j}\Delta g_{j}}{4}\Big\},
\]
where $p_j:=\p(\bX\in I_j)$. 
 We have by construction and \eqref{eq:uv-1} that 
\begin{align}
\E[\log(1+e^{-YM_{k}})]=\sum_{j\in J_{k}}\E[\log(1+e^{-YM_{k}})\bone_{\{\bX\in I_{j}\}}]\leq\sum_{j\in J_{k}}U_{j}+\sum_{j\in J_{k}}V_{j}.\label{eq:construction-1}
\end{align}
The first term is easy to control by definition: 
\begin{align}
\sum_{j\in J_{k}}U_{j}\leq\sum_{j\in J_{k}}\E[\log(1+e^{-Yg(\bX)})\bone_{\{\bX\in I_{j}\}}]=\E[\log(1+e^{-Yg(\bX)})].\label{eq:uj-1}
\end{align}
To control $\sum_{j}V_{j}$, we apply the same Hölder's inequality
argument as in the proof of Theorem \ref{thm:martingale approx} in
the regression setting. More precisely, we have 
\begin{align}
\begin{split}\sum_{j\in J_{k}}V_{j} & \leq\Big(\sum_{j\in J_{k}}p_{j}\big(\frac{V_{j}}{p_{j}}\big)^{2}\Big)^{1/2}\Big(\sum_{j\in J_{k}}p_{j}\Big)^{1/2}\\
 & \leq\Bigg((\max_{j\in J_{k}}V_{j})\sum_{j\in J_{k}}\frac{V_{j}}{p_{j}}\Bigg)^{1/2}\leq\Bigg((\max_{j\in J_{k}}\E[\log(1+e^{-YM_{k}})\bone_{\{\bX\in I_{j}\}}])\sum_{j\in J_{k}}\frac{V_{j}}{p_{j}}\Bigg)^{1/2}.
\end{split}
\label{eq:holder2-1}
\end{align}
To further bound the right-hand side of \eqref{eq:holder2-1}, we
recall from Lemma \ref{lemma:continuous-1} that as a consequence of the marginally
atomless property, 
\begin{align}
\max_{j\in J_{k}}\E[\log(1+e^{-YM_{k}})\bone_{\{\bX\in I_{j}\}}]\leq2^{-k}\E[\log(1+e^{-YM_{0}})]=2^{-k}H(Y).\label{eq:maxvar1-1}
\end{align}
Note that ${V_{j}}/{p_{j}}\leq\Delta g_{j}/4.$ Following the proof of Theorem \ref{thm:martingale approx}, we arrive at 
\begin{align*}
\sum_{j\in J_{k}}\frac{V_{j}}{p_{j}}\leq2^{-2+k-\lfloor k/d\rfloor}\n{g}_{\mathrm{TV}}.
\end{align*}
Inserting into \eqref{eq:holder2-1} gives (using that $H(Y)\leq\log2$)
\begin{align}
\sum_{j\in J_{k}}V_{j}\leq(2^{k-2-\lfloor k/d\rfloor}\n{g}_{\mathrm{TV}}2^{-k}H(Y))^{1/2}\leq\sqrt{\frac{\log2}{4}}\,2^{-\lfloor k/d\rfloor/2}\n{g}_{\mathrm{TV}}^{1/2}.\label{eq:sum V_j-1}
\end{align}
Finally, inserting \eqref{eq:uj-1} and \eqref{eq:sum V_j-1} into
\eqref{eq:construction-1} yields \eqref{eq:d-dim rate-1}. 
\end{proof}
\begin{proof}[Proof of Theorem \ref{thm:atomic-1}]
The only difference from the proof of Theorem \ref{thm:minimax exp decay 2-1}
is \eqref{eq:maxvar1-1}. Denoting by $\hat{x}_j$ the optimal MinimaxSplit location on $A$ in dimension $j$, we have 
\begin{align*}
\max\Big\{\varphi_{L}(\hat{x}_{j}),\varphi_{R}(\hat{x}_{j})\Big\}\leq\frac{1}{2}\p(\bX\in A)\,h(\p(Y=1\mid\bX\in A))+\frac{\log N}{N},
\end{align*}
because adding an atom of weight $\leq1/N$ to a node increases the
risk by at most 
$$\frac{1}{N}\,\log N+\Big(1-\frac{1}{N}\Big)\log\Big(\frac{1}{1-\frac{1}{N}}\Big)\leq \frac{1+\log N}{N},$$
which can be seen from the extreme case
with a sample $1$ added to samples of $-1$. As a consequence, if
we denote by 
\begin{align*}
u_{k}:=\max_{A\in\pi_{k}}\p(\bX\in A)\,h(\p(Y=1\mid\bX\in A)),
\end{align*}
then $u_{k}$ satisfies the recursive inequalities 
\begin{align*}
u_{k+1}\leq\frac{u_{k}}{2}+\frac{\log (eN)}{N},~k\geq0;\quad u_{0}=H(Y).
\end{align*}
Solving this yields 
\begin{align}
\max_{A\in\pi_{k}}\p(\bX\in A)\,h(\p(Y=1\mid\bX\in A))=u_{k}\leq2^{-k}H(Y)+\frac{2\log (eN)}{N}.\label{eq:2-1}
\end{align}
It is also easy to check that \eqref{eq:2-1} is also satisfied for
the case $u_{0}<2\log (eN)/N$, that is, \eqref{eq:2-1} holds in general.
Replacing \eqref{eq:maxvar1-1} by \eqref{eq:2-1} leads to 
\begin{align*}
\E[\log(1+e^{-YM_{k}})]\leq\inf_{g\in\G}\bigg(\E[\log(1+e^{-Yg(\bX)})]+\sqrt{\frac{\log2}{4}}\,2^{-\lfloor k/d\rfloor/2}\Big(\n{g}_{\mathrm{TV}}(1+2^{k+1}\frac{\log (eN)}{N})\Big)^{1/2}\bigg).
\end{align*}
Applying concavity of the function $x\mapsto x^{1/2}$ for $x>0$ and that 
\begin{align*}
\n{g}_{\mathrm{TV}}^{1/2}\Big(2^{k+1}\frac{\log N}{N}\Big)^{1/2}\leq\frac{\n{g}_{\mathrm{TV}}}{2}+\frac{2^{k}\log (eN)}{N},
\end{align*}
we obtain \eqref{eq:atomic-1}. 
\end{proof}

\begin{proof}[Proof of Theorem \ref{thm:oracle-1}]
    The proof follows in exactly the same
way as Theorem 4.3 of \citet{klusowski2024large} in the C4.5 case,
as the arguments therein do not depend on how the tree is constructed,
as long as the splitting rule is axis-aligned. 
\end{proof}

\section{Additional example for anti-symmetry-breaking preference}

\label{sec:Additional-Example-for}

In this appendix, we provide an empirical version of Example \ref{ex:ASBP}, showing that the ASBP persists even when samples are involved (meaning that the symmetry is not perfect under the empirical measure).
\begin{example}
\label{ex:ASBP2} Define $f(\bx)=x_{1}+|x_{2}|,~\bx=(x_{1},x_{2})\in[-1,1]^{2}$,
which is Lipschitz and bounded. Let $\p_{*}$ denote the law of $\bX$
in Example \ref{ex:ASBP} and $\p_{n}$ denote the empirical law of
$\{(\bX_{i},Y_{i})\}_{1\leq i\leq n}$, where each $\bX_{i}$ is sampled
from $\p_{*}$ and $Y_{i}=f(\bX_{i})$. Denote by $\mathcal{R}$ the
set of all axis-aligned rectangles in $[-1,1]^{2}$.


Since $\n{f}_\infty\leq 2$ and the set of axis-aligned rectangles forms a VC class,
the class
\[
\mathcal H:=\{(f-m)^2\mathbf 1_R:\ R\in\mathcal R,\ m\in[-2,2]\}
\]
is a bounded Donsker class. For
\[
T_R:=\inf_{m\in[-2,2]} \E^{\p_*}[(f(\bX)-m)^2\mathbf 1_{\{\bX\in R\}}],
\qquad
V_R:=\inf_{m\in[-2,2]} \E^{\p_n}[(f(\bX)-m)^2\mathbf 1_{\{\bX\in R\}}],
\]
we therefore have
\[
\sup_{R\in\mathcal R}|V_R-T_R|\leq \sup_{h\in\mathcal H}|\E^{\p_*}[h]-\E^{\p_n}[h]|=O_\p\Big(\frac{1}{\sqrt{n}}\Big).
\]
Here, we use the notation $\xi_{n}=O_{\p}(\alpha_{n})$ if the sequence
$\{\xi_{n}/\alpha_{n}\}$ is tight. 
Moreover, for each $R\in\mathcal R$, if $M_R:=\sum_{i=1}^n \mathbf 1_{\{\bX_i\in R\}}$,
then conditional on $M_R=m\ge1$,
\[
\E[V_R\mid M_R=m]=\frac{m-1}{n}\Var(f(\bX)\mid \bX\in R),
\]
while $V_R=0$ when $m=0$. Hence
\[
0\le T_R-\E[V_R]
=\frac{\p(M_R\ge1)}{n}\Var(f(\bX)\mid \bX\in R)
\le \frac{\|f\|_\infty^2}{n}\leq \frac{4}{n}.
\]
By the triangle inequality,
\begin{align}
\sup_{R\in\cR}|V_{R}-\E[V_{R}]|=O_{\p}\Big(\frac{1}{\sqrt{n}}\Big).\label{eq:VR}
\end{align}

Next, we apply \eqref{eq:VR} to analyze the split dimensions and
locations when applying MinimaxSplit to $\p_{n}$. For any $\ee>0$,
there exists $C=C(\ee)>0$ such that $\p(\forall R\in\cR,\,|V_{R}-\E[V_{R}]|<C/\sqrt{n})>1-\ee$.
Our goal is to show the deterministic fact that for some $C_{0}=C_{0}(\ee)>0$,
if $n\geq C_{0}64^{K}$, on the event 
\begin{align}
    \Big\{\forall R\in\cR,\,|V_{R}-\E[V_{R}]|<\frac{C}{\sqrt{n}}\Big\},\label{eq:Ce}
\end{align}
the splits of the first $K$ levels are all along the first coordinate
$x_{1}$. Consider a rectangle $R_{a,b}=[a,b]\times[-1,1]\in\cR$
where $-1\leq a<b\leq1$. Define 
\begin{align*}
R_{a,b;u}^{\uparrow} & =[a,b]\times[u,1],\quad R_{a,b;u}^{\downarrow}=[a,b]\times[-1,u],\quad u\in[-1,1];\\
R_{a,b;u}^{\leftarrow} & =[a,u]\times[-1,1],\quad R_{a,b;u}^{\rightarrow}=[u,b]\times[-1,1],\quad u\in[a,b].
\end{align*}
Also denote by $m(p)=\E[(M-1)\bone_{\{M\geq1\}}/M]$ where $M\lawis\mathrm{Bin}(n,p)$.
A standard computation (by first conditioning on the number of samples
in the rectangle) yields that 
\begin{align}
\begin{split}
    \E\left[V_{R_{a,b}}\right] & =m\Big(\frac{b-a}{2}\Big)\frac{b-a}{2}\frac{(b-a)^{2}+1}{12};\\
\E\left[V_{R_{a,b;u}^{\uparrow}}\right] & =m\Big(\frac{(1-u)(b-a)}{4}\Big)\frac{(1-u)(b-a)}{4}\Big(\frac{(b-a)^{2}}{12}+\phi(u)\Big);\\
\E\left[V_{R_{a,b;u}^{\downarrow}}\right] & =m\Big(\frac{(u+1)(b-a)}{4}\Big)\frac{(u+1)(b-a)}{4}\Big(\frac{(b-a)^{2}}{12}+\phi(-u)\Big);\\
\E\left[V_{R_{a,b;u}^{\leftarrow}}\right] & =m\Big(\frac{u-a}{2}\Big)\frac{u-a}{2}\frac{(u-a)^{2}+1}{12};\\
\E\left[V_{R_{a,b;u}^{\rightarrow}}\right] & =m\Big(\frac{b-u}{2}\Big)\frac{b-u}{2}\frac{(b-u)^{2}+1}{12},
\end{split}\label{eq:5eqs}
\end{align}
where 
\begin{align*}
\phi(u)=\begin{cases}
\frac{(1-u)^{2}}{12} & \text{ if }u\geq0;\\
\frac{u^{4}-4u^{3}-6u^{2}-4u+1}{12(1-u)^{2}} & \text{ if }u<0.
\end{cases}
\end{align*}
In the following, the constants $C$ (depending on $\ee$ only) may
vary from line to line. Note that $m(p)=1-O(1/(np))$ jointly as $np\to\infty$
(see e.g., Corollary 1 of \citet{wang2008asymptotic}). The following
results hold if $b-a\geq C/n^{1/6}$ for $n$ large enough. 
\begin{itemize}
\item Consider a split of $R_{a,b}$ into $R_{a,b;u}^{\leftarrow}$ and
$R_{a,b;u}^{\rightarrow}$. The MinimaxSplit location satisfies $|u-(a+b)/2|\leq C/\sqrt{n}$ by \eqref{eq:Ce} and \eqref{eq:5eqs}.
The risk decay is at least 
\[
\E\left[V_{R_{a,b}}\right]-\E\left[V_{R_{a,b;u}^{\leftarrow}}\right]-\E\left[V_{R_{a,b;u}^{\rightarrow}}\right]-\frac{3C}{\sqrt{n}}\geq\frac{(b-a)^{3}}{32}-\frac{C}{\sqrt{n}}.
\]
\item Consider a split of $R_{a,b}$ into $R_{a,b;u}^{\uparrow}$ and $R_{a,b;u}^{\downarrow}$.
The function $u\mapsto(1-u)\phi(u)$ is decreasing and differentiable
on $[-1,1]$ with a strictly negative derivative at $u=0$. Therefore,
the MinimaxSplit location satisfies $|u|\leq C/((b-a)\sqrt{n})$ by \eqref{eq:Ce} and \eqref{eq:5eqs}.
The risk decay is at most 
\[
\E\left[V_{R_{a,b}}\right]-\E\left[V_{R_{a,b;u}^{\downarrow}}\right]-\E\left[V_{R_{a,b;u}^{\uparrow}}\right]+\frac{3C}{\sqrt{n}}\leq\frac{C}{\sqrt{n}}.
\]
\end{itemize}
Therefore, if $b-a\geq C/n^{1/6}$, the MinimaxSplit dimension is
always $x_{1}$. Next, we show that under our assumption $n\geq C_{0}64^{K}$,
we always have $b-a\geq C/n^{1/6}$ for the first $K$ levels of splits.
The split at level $k=0$ is obvious. Denote by $a_{k}$ the smallest
size (measured in dimension $x_{1}$) of the rectangles at level $k$.
It follows that $a_{0}=2$ and for each $k$, $a_{k+1}\geq a_{k}/2-C/\sqrt{n}$.
Solving this recursion yields $a_{k}\geq2^{1-k}-2C/\sqrt{n}$ for
all $k$. In particular, $a_{K}\geq2^{1-K}-2C/\sqrt{n}\geq C/n^{1/6}$.

In summary, we have shown that if $n\geq C_{0}64^{K}$, with high
probability, the MinimaxSplit dimension is always along the first
dimension for the first $K$ levels of the tree. As a consequence,
the consistency of the MinimaxSplit algorithm is not always guaranteed
in the large-sample regime, unlike VarianceSplit (Theorem 4.3 of \citet{klusowski2024large})
and cyclic MinimaxSplit (Theorem \ref{thm:oracle}). 
\end{example}
\section{Additional synthetic experiments for the classification setting}
\label{sec:appendix-classification-synthetic}

In this appendix, we complement Section~\ref{sec:Minimax-classify} with
additional synthetic experiments that isolate the behavior of the
classification splitting rules in one dimension. Throughout, we consider
$Y\in\{\pm 1\}$ and write
\[
\eta(x)\coloneqq \p(Y=1\mid X=x),\qquad g_{*}(x)\coloneqq \log\Big(\frac{\eta(x)}{1-\eta(x)}\Big).
\]
We compare three axis-aligned tree constructions:
(i) entropy-CART, which minimizes the weighted sum of child entropies;
(ii) Gini-CART, which minimizes the weighted sum of child Gini impurities;\footnote{The criterion $h(p)=-p\log p-(1-p)\log(1-p)$ was used for entropy-CART, while Gini-CART uses the Gini impurity criterion $2p(1-p)$.}
and (iii) the entropy-based MinimaxSplit rule from Section~\ref{sec:Minimax-classify},
which minimizes the maximum of the two weighted child entropies.
Since $d=1$ in all experiments below, MinimaxSplit and cyclic MinimaxSplit coincide.
Unless otherwise stated, each tree is grown with minimum leaf size $8$, training sample size $250$,
and depth budget $k\in\{1,\dots,8\}$.

\subsection{Piecewise signal: predictive risk as a function of depth}
\label{subsec:appendix-classification-piecewise}

We first consider a piecewise-constant Bayes log-odds,
\[
g_{*}(x)=
\begin{cases}
2, & 0\le x<0.30,\\
-1.5, & 0.30\le x<0.60,\\
1, & 0.60\le x\le 1,
\end{cases}
\qquad X\lawis \mathrm{Unif}(0,1),
\]
so that $\eta(x)=e^{g_{*}(x)}/(1+e^{g_{*}(x)})$ is piecewise constant with two change-points.
The left panel of Figure~\ref{fig:appendix-classification-depth} reports the expected log-loss,
which is the empirical analogue of the surrogate risk in
\eqref{eq:d-dim rate-1} and \eqref{eq:atomic-1}.
The middle panel reports the corresponding misclassification error.

Two features of Figure~\ref{fig:appendix-classification-depth} are worth emphasizing.
First, entropy-CART and Gini-CART achieve the best shallow-tree performance:
at depth $k=2$ they attain an expected log-loss of around $0.52$, whereas MinimaxSplit is still near $0.54$.
This is consistent with the fact that the CART rules are more aggressive in optimizing the immediate impurity decrease.
Second, once the depth exceeds the first few levels, the ranking reverses.
From depth $k=3$ onward, MinimaxSplit has the smallest expected log-loss,
and the gap becomes substantial by depths $k=5,6,7,8$.
In particular, the MinimaxSplit curve stabilizes near $0.541$, while the two CART baselines drift upward toward $0.589$.
Thus, on this piecewise signal, the worst-child criterion trades some initial greediness for a substantially more stable partition at moderate and large depths.

The middle panel shows that the advantage is more pronounced for cross-entropy than for the $0$--$1$ loss.
This is also in line with Section~\ref{sec:Minimax-classify}: the primary theoretical control is on
$\E[\log(1+e^{-Y M_{k}})]$, and the excess classification risk is obtained only after the calibration step of Theorem~\ref{thm:oracle-1}.
Empirically, the misclassification curves are close after depth $k=3$, whereas the cross-entropy curves remain clearly separated.
The right panel shows that MinimaxSplit reaches a larger number of terminal cells by depth $5$,
but, as we explain next, these leaves are much more evenly populated.

\begin{figure}[t]
\centering
\includegraphics[width=0.98\textwidth]{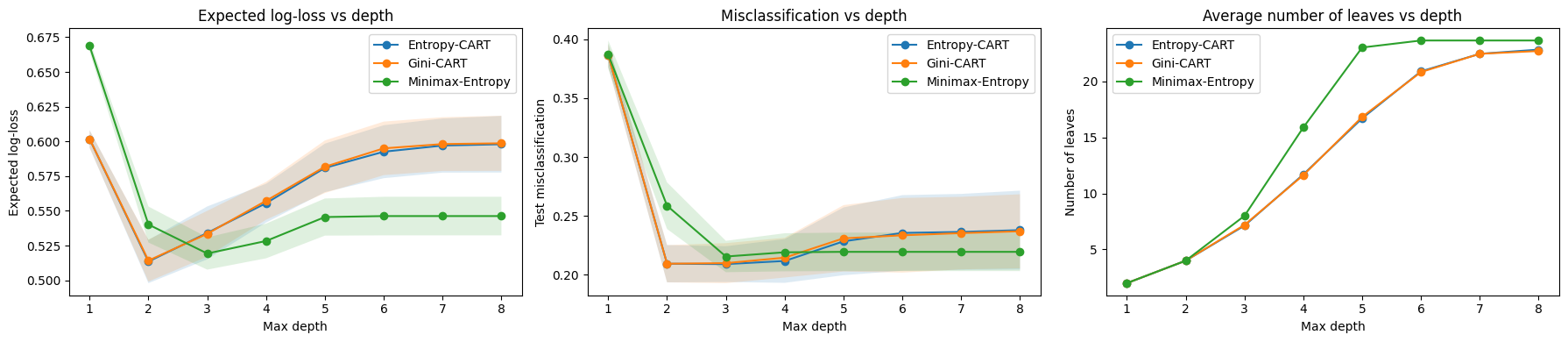}
\caption{Synthetic one-dimensional classification experiment with piecewise Bayes log-odds.
Left: expected log-loss versus depth. Middle: test misclassification versus depth.
Right: average number of leaves versus depth. In dimension $d=1$, the entropy-based MinimaxSplit rule coincides with cyclic MinimaxSplit.
The main empirical advantage appears in the surrogate risk: after a short shallow-tree disadvantage,
MinimaxSplit attains a lower and markedly more stable expected log-loss than entropy-CART and Gini-CART.}
\label{fig:appendix-classification-depth}
\end{figure}

\subsection{Leaf-balance as the geometric mechanism}
\label{subsec:appendix-classification-leaf-balance}

Figure~\ref{fig:appendix-classification-leaf-balance} makes the geometric mechanism behind the previous risk comparison explicit.
The left panel plots the mean number of samples per leaf on a logarithmic scale, and the right panel plots the standard deviation of the leaf sizes, again on a logarithmic scale.
The mean leaf sizes are broadly comparable once the depth is moderate, but the dispersion is dramatically different.
For example, at depth $k=4$ the standard deviation of the leaf sizes is about $4.9$ for MinimaxSplit, compared with roughly $13$ for the two CART baselines;
at depth $k=6$ the corresponding values are about $3.3$ versus about 6.5.

This is the empirical signature of the worst-child criterion.
Entropy-CART and Gini-CART reduce the average impurity, and therefore can continue to benefit from highly unbalanced splits as long as one child becomes pure enough.
By contrast, MinimaxSplit penalizes any split that leaves one child both large and highly impure.
The resulting trees are therefore more regular in the sense that the sample sizes of the terminal cells remain much closer to one another.
Viewed together with Figure~\ref{fig:appendix-classification-depth}, Figure~\ref{fig:appendix-classification-leaf-balance} suggests that the deeper-tree log-loss advantage of MinimaxSplit is not merely a numerical accident:
it is explained by a clear difference in partition geometry.

\begin{figure}[t]
\centering
\includegraphics[width=0.78\textwidth]{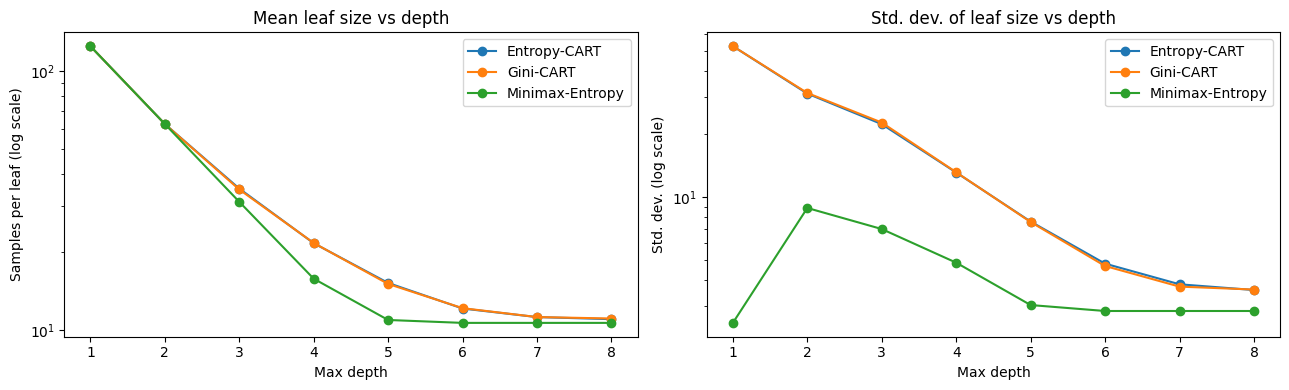}
\caption{Leaf-size statistics for the one-dimensional piecewise classification problem.
The left panel shows the mean number of samples per leaf, and the right panel shows the standard deviation of the leaf sizes.
MinimaxSplit produces terminal cells with much smaller leaf-size dispersion, which is the classification analogue of avoiding highly unbalanced end-cuts.}
\label{fig:appendix-classification-leaf-balance}
\end{figure}

\subsection{Continuous versus atomic covariates}
\label{subsec:appendix-classification-atomic}

Finally, Figure~\ref{fig:appendix-classification-atomic} compares the same piecewise signal in two sampling regimes.
In the left panel, $X\lawis \mathrm{Unif}(0,1)$ is marginally atomless, corresponding to the setting of Theorem~\ref{thm:minimax exp decay 2-1}.
In the right panel, $X$ is sampled from a uniform grid of $41$ equally spaced points in $[0,1]$, which places us in the purely atomic setting of Theorem~\ref{thm:atomic-1}.
The plotted quantity is again the expected log-loss.

The qualitative ordering is the same in both regimes.
In each case, the CART baselines improve quickly at the first few levels and then deteriorate as the tree continues to refine,
whereas the MinimaxSplit curve becomes nearly flat once it reaches its best value.
The advantage of MinimaxSplit is somewhat smaller in the atomic experiment than in the atomless one,
but it remains visible throughout the deeper levels.
This is consistent with the form of \eqref{eq:atomic-1}, where exponential decay in the atomless case is accompanied by an additional discrete-resolution term.
Empirically, the atomic curves flatten earlier, which is natural because once the grid resolution and the minimum-leaf constraint become active,
all three procedures have limited room for further refinement.

Overall, Figure~\ref{fig:appendix-classification-atomic} shows that the classification analogue of MinimaxSplit is not tied to the marginally atomless setting:
its advantage in cross-entropy persists when the covariate distribution is discrete, although the attainable gain is moderated by the finite support.

\begin{figure}[t]
\centering
\includegraphics[width=0.78\textwidth]{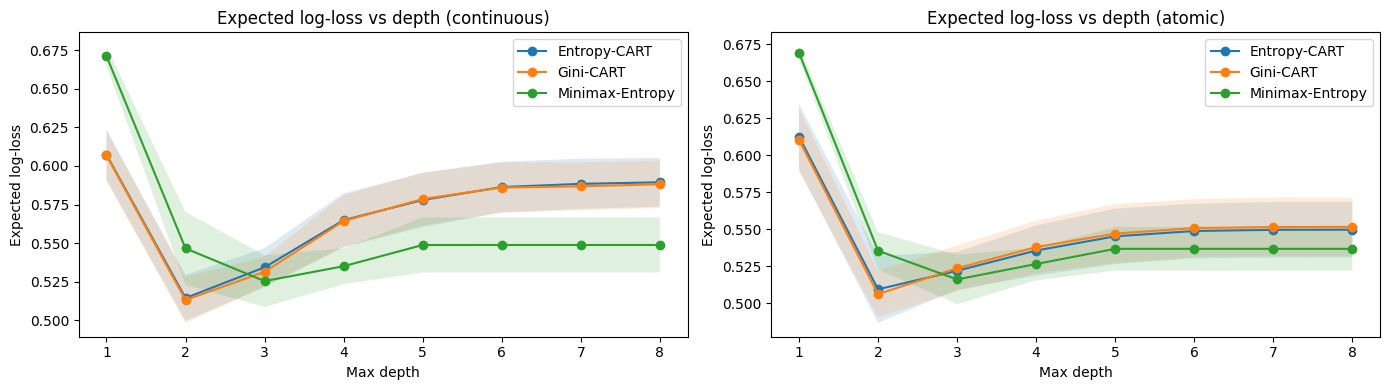}
\caption{Expected log-loss for the one-dimensional piecewise classification problem under continuous and atomic covariates.
Left: $X\lawis \mathrm{Unif}(0,1)$. Right: $X$ sampled from a uniform grid on $[0,1]$.
The same qualitative pattern appears in both settings: entropy-CART and Gini-CART improve fastest at very small depths, while MinimaxSplit is more stable and attains the best deep-tree surrogate risk.}
\label{fig:appendix-classification-atomic}
\end{figure}

\section{Further numerics}

\label{app:further numerics}

\subsection{Performance metrics}

\label{subsec:Performance-Metrics}

\paragraph{Mean Squared Error (MSE).}

Given a matrix (or image) $y\in\mathbb{R}^{H\times W}$ and an estimate
$\hat{y}$, we define
\[
\mathrm{MSE}(y,\hat{y})=\frac{1}{HW}\sum_{i=1}^{H}\sum_{j=1}^{W}\bigl(y_{ij}-\hat{y}_{ij}\bigr)^{2}.
\]
MSE emphasizes large errors (squaring) and is simple to optimize. We also define the Root Mean Squared Error (RMSE) as
\begin{align*}
\operatorname{RMSE}(y,\hat y)
  \;=\; \sqrt{\operatorname{MSE}(y,\hat y)}
  \;=\; \sqrt{\frac{1}{HW}\sum_{i=1}^{H}\sum_{j=1}^{W}\bigl(y_{ij}-\hat y_{ij}\bigr)^{2}}\,.
\end{align*}

\paragraph{Mean Absolute Error (MAE).} We define
\[
\mathrm{MAE}(y,\hat{y})=\frac{1}{HW}\sum_{i=1}^{H}\sum_{j=1}^{W}\lvert y_{ij}-\hat{y}_{ij}\rvert.
\]
MAE (an $\ell_{1}$ criterion) is \emph{more robust to outliers} than
MSE and often produces sharper restorations with less over-smoothing
in image restoration tasks; in practice, it can outperform $\ell_{2}$ metrics
on denoising and related problems \citep{zhao2016loss,jadon2022regression}.

\paragraph{Coefficient of Determination ($R^2$).} Let the spatial mean be
\begin{align*}
\bar y \;=\; \frac{1}{HW}\sum_{i=1}^{H}\sum_{j=1}^{W} y_{ij}.
\end{align*}
Define the residual and total sums of squares
\begin{align*}
\mathrm{SS}_{\mathrm{res}}
  \;=\; \sum_{i=1}^{H}\sum_{j=1}^{W}\bigl(y_{ij}-\hat y_{ij}\bigr)^{2},
\qquad
\mathrm{SS}_{\mathrm{tot}}
  \;=\; \sum_{i=1}^{H}\sum_{j=1}^{W}\bigl(y_{ij}-\bar y\bigr)^{2}.
\end{align*}
Then
\begin{align*}
R^{2}(y,\hat y)
  \;=\; 1 - \frac{\mathrm{SS}_{\mathrm{res}}}{\mathrm{SS}_{\mathrm{tot}}}
  \;=\; 1 - \frac{\operatorname{MSE}(y,\hat y)}{\operatorname{Var}(y)}\,,
\quad
\text{where }\;
\operatorname{Var}(y)
  \;=\; \frac{1}{HW}\sum_{i=1}^{H}\sum_{j=1}^{W}\bigl(y_{ij}-\bar y\bigr)^{2}.
\end{align*}

\paragraph{Structural Similarity (SSIM) \citep{wang2004image}.}


For a window $(\Omega\subset\{1,\ldots,H\}\times\{1,\ldots,W\})$ with non-negative weights
$(\{w_{uv}\}_{(u,v)\in\Omega})$ satisfying $\sum_{(u,v)\in\Omega} w_{uv}=1$, define the local
means, variances and covariance
\begin{align*}
\mu_y(\Omega)=\sum_{(u,v)\in\Omega} w_{uv}\, y_{uv},\qquad
\mu_{\hat y}(\Omega)=\sum_{(u,v)\in\Omega} w_{uv}\, \hat y_{uv},
\end{align*}
\begin{align*}
\sigma_y^2(\Omega)=\sum_{(u,v)\in\Omega} w_{uv}\,\bigl(y_{uv}-\mu_y(\Omega)\bigr)^2,\qquad
\sigma_{\hat y}^2(\Omega)=\sum_{(u,v)\in\Omega} w_{uv}\,\bigl(\hat y_{uv}-\mu_{\hat y}(\Omega)\bigr)^2,
\end{align*}
\begin{align*}
\sigma_{y\hat y}(\Omega)=\sum_{(u,v)\in\Omega} w_{uv}\,\bigl(y_{uv}-\mu_y(\Omega)\bigr)\bigl(\hat y_{uv}-\mu_{\hat y}(\Omega)\bigr).
\end{align*}
With stability constants $C_1=(K_1L)^2$ and $C_2=(K_2L)^2$ (dynamic range $L$; typically $K_1=0.01$,
$K_2=0.03$), the window-level structural similarity is
\begin{align*}
\mathrm{SSIM}_\Omega(y,\hat y)\;=\;
\frac{\bigl(2\,\mu_y(\Omega)\,\mu_{\hat y}(\Omega)+C_1\bigr)\bigl(2\,\sigma_{y\hat y}(\Omega)+C_2\bigr)}
     {\bigl(\mu_y(\Omega)^2+\mu_{\hat y}(\Omega)^2+C_1\bigr)\bigl(\sigma_y^2(\Omega)+\sigma_{\hat y}^2(\Omega)+C_2\bigr)}.
\end{align*}

The image-level SSIM is the average over all window centers $\{\Omega_m\}_{m=1}^M$,
\[
\mathrm{SSIM}(y,\hat y)\;=\;\frac{1}{M}\sum_{m=1}^M \mathrm{SSIM}_{\Omega_m}(y,\hat y).
\]
In our experiments the images are normalized to $[0,1]$, hence $L=1$, and we use Gaussian
weights $w_{uv}$ and mean aggregation as in \texttt{skimage.metrics.structural\_similarity}.

\subsection{Two-dimensional input domain}

\label{sec:2}

For this experiment, we use a synthetic dataset generated from a complex,
non-linear function of two features, $\bx=(x_{1},x_{2})$. 
The true function $f:\mathbb{R}^{2}\rightarrow\mathbb{R}$ used for
generating the target values is defined as: 
\begin{align}
\begin{split}f(\bx) & =\left(2-2.1u_{1}^{2}+\frac{u_{1}^{4}}{3}\right)u_{1}^{2}+u_{1}u_{2}+\left(-4+4u_{2}^{2}\right)u_{2}^{2};\\
 & u_{1}=3(x_{1}-0.5),\\
 & u_{2}=3(x_{2}-0.5),
\end{split}
\label{eq:function}
\end{align}
where $x_{1},x_{2}$ are independently and uniformly sampled from
$[0,1]$. This function incorporates both quadratic and higher-order
polynomial terms, creating a complex surface with multiple peaks and
valleys. The visualizations of the prediction surfaces, as shown in
Figures \ref{fig:The-predictions-comparison-1} and \ref{fig:The-predictions-comparison-2},
further illustrate the improved accuracy and smoother transitions
in the predicted values when using the proposed methods. In addition
to decision trees fitted with MSE ($L^{2}$) loss with these three
kinds of splitting criteria, we also provide trees fitted with $L^{1}$
loss for completeness.

\begin{figure}[h!]
\centering \includegraphics[width=0.95\textwidth]{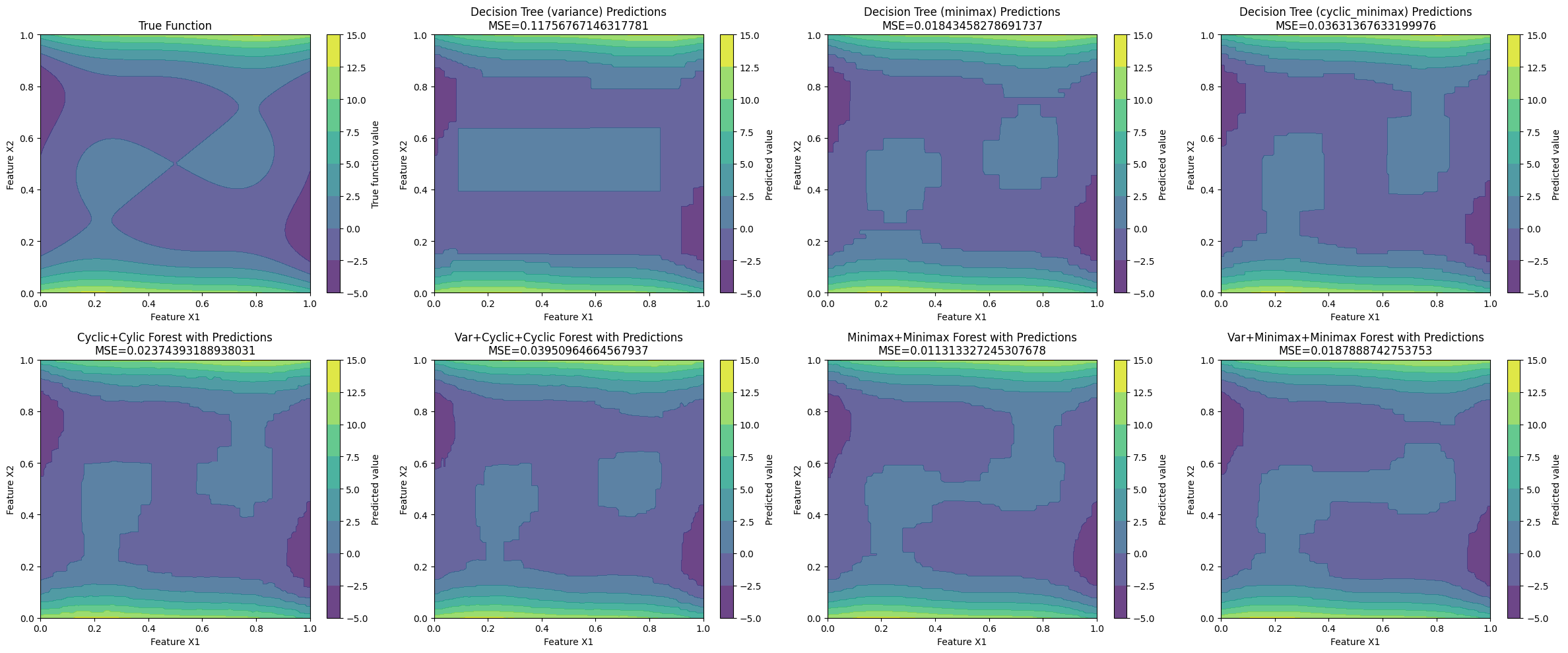}

\caption{\label{fig:The-predictions-comparison-1}Predictions and true function
values of \eqref{eq:function}, visualized using heatmaps. Top Row:
The first subplot shows the heatmap of the true function values across
the input space, serving as the benchmark for evaluating the models.
The next three subplots depict the predictions of the variance-based
decision tree, minimax decision tree, and cyclic minimax decision
tree (depth $k=10$), respectively. Each plot is annotated with the
corresponding MSE to quantitatively assess the accuracy. Bottom Row:
The final four subplots visualize the predictions from four different
random forest models (using Algorithm 1 of Appendix \ref{sec:Algorithms}),
each built with varying combinations of the three error methods. Again,
each plot is labeled with the MSE to facilitate direct comparison.}
\end{figure}

\begin{figure}[t!]
\centering

\includegraphics[width=0.95\textwidth]{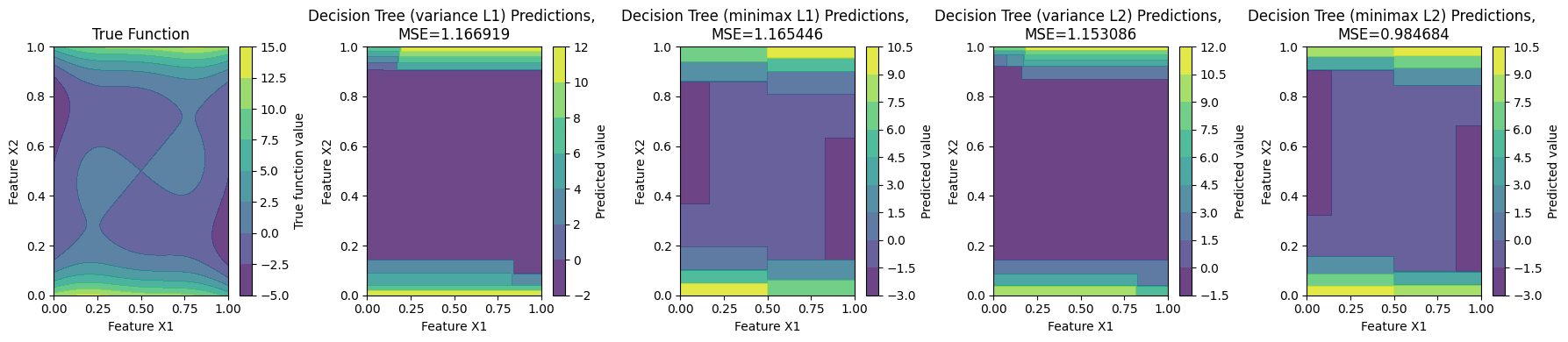}

\includegraphics[width=0.95\textwidth]{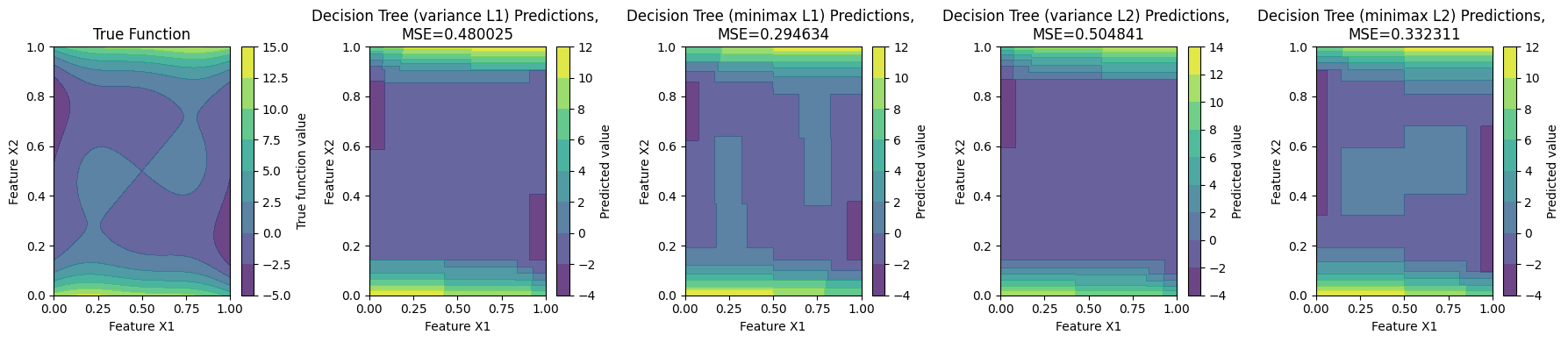}

\includegraphics[width=0.95\textwidth]{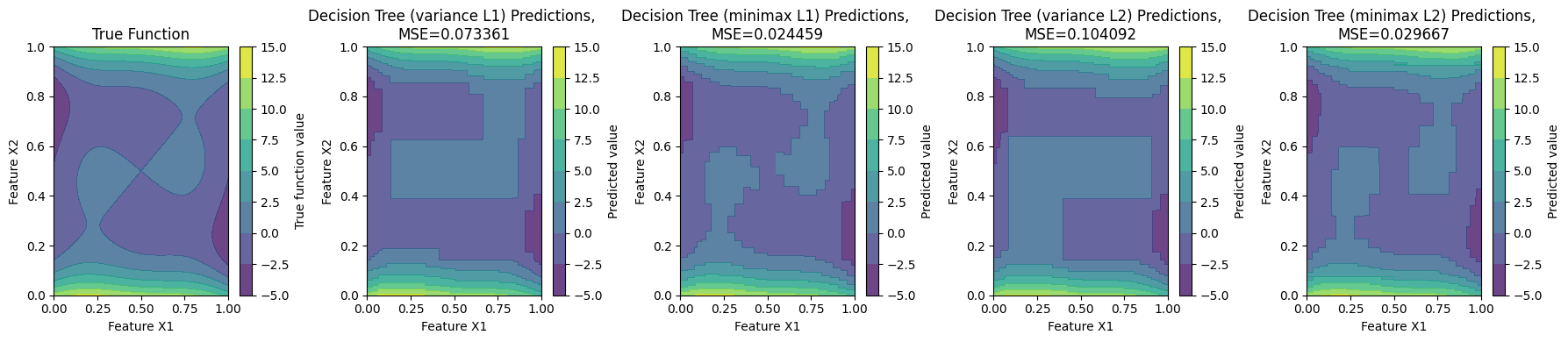}

\caption{\label{fig:The-predictions-comparison-2}The predictions and true
function values are visualized using heatmaps, allowing for a clear
comparison of how well each model approximates the underlying function
\eqref{eq:function}. The first subplot shows the heatmap of the true
function values across the input space, serving as the benchmark for
evaluating the models. The next three subplots depict the predictions
of the $L^{1}$ VarianceSplit, $L^{1}$ MinimaxSplit, $L^{2}$ VarianceSplit,
and $L^{2}$ MinimaxSplit. Top Row: maximum depth 2; Middle Row: maximum depth
6; Bottom Row: maximum depth 10.}
\end{figure}

In this example (as well as in Sections \ref{sec:app denoise} and
\ref{sec:RF} below), we also incorporate an $L^{1}$ variation of
the VarianceSplit and MinimaxSplit algorithms, where in the splitting
rule, we replace the variances in \eqref{eq:variance tree def}, \eqref{eq:minimaxpt},
and \eqref{eq:minimax split point} by the $L^{1}$ distance from
the mean. For instance, the $L^{1}$ variation of \eqref{eq:variance tree def}
is 
\begin{align}
\begin{split}(j,x_{j}) & =\argmin_{(j,\,x_{j})}\Big(\E\big[|Y-\E[Y\mid\bX\in A,\,X_{j}<x_{j}]|\bone_{\{\bX\in A,\,X_{j}<x_{j}\}}\big]\\
 & \hspace{2cm}+\E\big[|Y-\E[Y\mid\bX\in A,\,X_{j}\geq x_{j}]|\bone_{\{\bX\in A,\,X_{j}\geq x_{j}\}}\big]\Big).
\end{split}
\label{eq:minimax tree def}
\end{align}
The optimal $L^{1}$ split point may not be unique, in which case
we break ties arbitrarily.

\subsection{Higher-dimensional input domain}

\label{sec:high-dim}

Consider a dimension $d\geq1$ that is a multiple of $4$. The Powell
function on $[0,1]^d$, 
\[
f(\bx)=\sum_{i=1}^{d/4}[(x_{4i-3}+10x_{4i-2})^{2}+5(x_{4i-1}-x_{4i})^{2}+(x_{4i-2}-2x_{4i-1})^{4}+10(x_{4i-3}-x_{4i})^{4}],
\]
with uniformly random samples on $[0,1]^{d}$, serves as an exemplary
testbed for decision tree methods in high-dimensional regression problems
due to its inherent non-linearity and intricate variable interactions.
The quartic terms in the function introduce sharp curvatures in the
response surface, necessitating sophisticated splitting strategies
to approximate these non-linear relationships accurately. 

\begin{table}[t!]
\centering
\begin{tabular}{ccccc}
\hline 
Sample Size & Dimension & Scikit-learn & VarianceSplit & MinimaxSplit\tabularnewline
\hline 
10 & 4 & $4.48\times10^{2}$ & $\boxed{2.01\times10^{2}}$ & $2.57\times10^{2}$\tabularnewline
10 & 16 & $1.60\times10^{4}$ & ${1.52\times10^{4}}$ & $\boxed{1.52\times10^{4}}$\tabularnewline
10 & 64 & $3.70\times10^{4}$ & $4.24\times10^{4}$ & $\boxed{3.79\times10^{4}}$\tabularnewline
10 & 256 & $1.35\times10^{5}$ & $1.24\times10^{5}$ & $\boxed{1.14\times10^{5}}$\tabularnewline
10 & 1024 & $9.31\times10^{5}$ & $8.04\times10^{5}$ & $\boxed{4.28\times10^{5}}$\tabularnewline
100 & 4 & $5.49\times10^{1}$ & $\boxed{5.21\times10^{1}}$ & $5.31\times10^{1}$\tabularnewline
100 & 16 & $2.70\times10^{3}$ & $2.67\times10^{3}$ & $\boxed{2.57\times10^{3}}$\tabularnewline
100 & 64 & $1.99\times10^{4}$ & $\boxed{1.95\times10^{4}}$ & $2.20\times10^{4}$\tabularnewline
100 & 256 & $1.01\times10^{5}$ & $1.02\times10^{5}$ & $\boxed{9.39\times10^{4}}$\tabularnewline
100 & 1024 & $4.65\times10^{5}$ & $4.29\times10^{5}$ & $\boxed{4.02\times10^{5}}$\tabularnewline
1000 & 4 & $4.68\times10^{1}$ & $4.65\times10^{1}$ & $\boxed{4.49\times10^{1}}$\tabularnewline
1000 & 16 & $\boxed{1.85\times10^{3}}$ & $1.85\times10^{3}$ & $2.03\times10^{3}$\tabularnewline
1000 & 64 & $1.62\times10^{4}$ & $1.62\times10^{4}$ & $\boxed{1.59\times10^{4}}$\tabularnewline
1000 & 256 & $6.46\times10^{4}$ & $6.47\times10^{4}$ & $\boxed{6.30\times10^{4}}$\tabularnewline
1000 & 1024 & $2.94\times10^{5}$ & $2.96\times10^{5}$ & $\boxed{2.88\times10^{5}}$\tabularnewline
10000 & 4 & $\boxed{4.61\times10^{1}}$ & $4.61\times10^{1}$ & $4.63\times10^{1}$\tabularnewline
10000 & 16 & $\boxed{1.87\times10^{3}}$ & $1.87\times10^{3}$ & $1.97\times10^{3}$\tabularnewline
10000 & 64 & $\boxed{1.36\times10^{4}}$ & $1.36\times10^{4}$ & $1.43\times10^{4}$\tabularnewline
10000 & 256 & $6.22\times10^{4}$ & $6.22\times10^{4}$ & $\boxed{6.11\times10^{4}}$\tabularnewline
10000 & 1024 & $2.80\times10^{5}$ & $2.80\times10^{5}$ & $\boxed{2.78\times10^{5}}$\tabularnewline
\hline 
\end{tabular}
\caption{Comparison of Decision Tree Methods ($\text{maximum depth}=3$) using the Powell
function without noise. Scikit-learn's DecisionTreeRegressor uses
MSE as its default splitting criterion, which is equivalent to minimizing
the variance of the target variable within each split and serves as
a baseline comparison here. The winner in each setting is boxed.}
\label{tab:tree-comparison}
\end{table}

Table \ref{tab:tree-comparison} compares three decision tree methods---scikit-learn's
standard implementation, VarianceSplit and MinimaxSplit---across
various sample sizes $n$ and dimensions $d$ for the Powell function.
In the dense design regime ($n>d$), 
we observe relatively consistent performance across all three methods.
However, as we transition into sparse designs ($d>n$), 
more pronounced differences emerge. The MinimaxSplit method often
demonstrates superior performance in these high-dimensional, data-scarce
scenarios. For instance, with 10 samples and 256 dimensions, the MinimaxSplit
approach achieves an MSE of 1.14 \texttimes{} 10\textsuperscript{5},
outperforming both scikit-learn (1.24 \texttimes{} 10\textsuperscript{5})
and the VarianceSplit method (1.35 \texttimes{} 10\textsuperscript{5}).

\section{Pseudocode for algorithms }

\label{sec:Algorithms}





\begin{algorithm}[p]
\caption{Node-level splitting rules in the notation of Section~2}
\label{alg:nodesplit-sec2}
\DontPrintSemicolon
\KwIn{Node $A_t$ at depth $k$ with sample $D_t=\{(x_i,y_i): x_i\in A_t\}$; feature set $[d]=\{1,\dots,d\}$ and $m_{try}=d$ by default}
\KwOut{$S_j(A_t)$: candidate thresholds for feature $j$ at $A_t$; $A^L_{j,s}=A_t\cap\{x_j\le s\}$, $A^R_{j,s}=A_t\cap\{x_j> s\}$; $P_t(B)=\Pr(X\in B\mid X\in A_t)\approx n(B)/n(A_t)$}
\vspace{0.15em}

\textbf{VarianceSplit (CART)} \tcp*[r]{cf. Fig.~2, left}
\For{$(j,s)\in\{(j,s): j\in[d],\ s\in S_j(A_t)\}$}{
  $\mathrm{Crit}(j,s)\gets P_t(A^L_{j,s})\,\Var(Y\mid A^L_{j,s}) + P_t(A^R_{j,s})\,\Var(Y\mid A^R_{j,s})$ \;
}
$(j^\star,s^\star)\gets \arg\min_{(j,s)} \mathrm{Crit}(j,s)$; \quad \textbf{split } $A_t\to A^L_{j^\star,s^\star},\,A^R_{j^\star,s^\star}$.
\vspace{0.35em}

\textbf{MinimaxSplit} \tcp*[r]{cf. Fig.~2, middle}
\For{$(j,s)\in\{(j,s): j\in[d],\ s\in S_j(A_t)\}$}{
  $\mathrm{Crit}(j,s)\gets \max\big\{P_t(A^L_{j,s})\,\Var(Y\mid A^L_{j,s}),\ P_t(A^R_{j,s})\,\Var(Y\mid A^R_{j,s})\big\}$ \;
}
$(j^\star,s^\star)\gets \arg\min_{(j,s)} \mathrm{Crit}(j,s)$; \quad \textbf{split } $A_t\to A^L_{j^\star,s^\star},\,A^R_{j^\star,s^\star}$.
\vspace{0.35em}

\textbf{Cyclic MinimaxSplit} \tcp*[r]{cf. Fig.~2, right}
$j_k\gets 1+(k\bmod d)$ \tcp*[r]{cyclic feature at depth $k$}
\For{$s\in S_{j_k}(A_t)$}{
  $\mathrm{Crit}(j_k,s)\gets \max\big\{P_t(A^L_{j_k,s})\,\Var(Y\mid A^L_{j_k,s}),\ P_t(A^R_{j_k,s})\,\Var(Y\mid A^R_{j_k,s})\big\}$ \;
}
$s^\star\gets \arg\min_{s\in S_{j_k}(A_t)} \mathrm{Crit}(j_k,s)$; \quad \textbf{split } $A_t\to A^L_{j_k,s^\star},\,A^R_{j_k,s^\star}$.
\end{algorithm}


\begin{figure}
\centering

\includegraphics[width=0.7\textwidth]{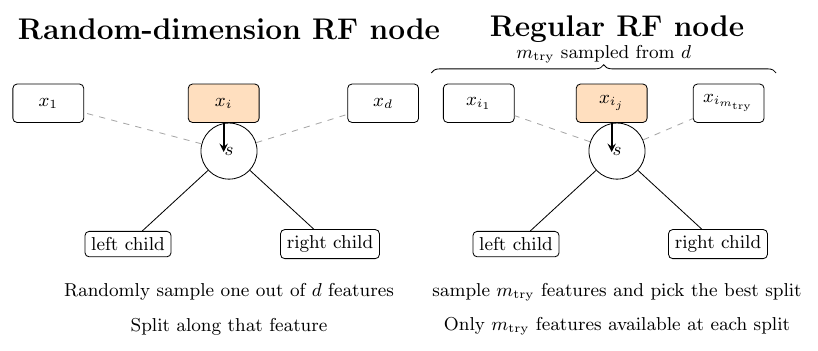}

\caption{Comparison of node-level split selection in two random forest variants.
Left: a random-dimension random forest
selects a random feature uniformly for each node and chooses the best split over the chosen feature. 
Right: a regular RF node samples $m_{\mathrm{try}}$ coordinates uniformly from the $d$
features 
and chooses the best split over all sampled features
according to the chosen impurity/criterion (the
highlighted feature illustrates the winner). 
}

\end{figure}




\begin{algorithm}[p]
\caption{Random Forest with generic split rule}
\label{alg:rf-regular}
\KwIn{Sample $\mathcal{D}=\{(X_i,Y_i)\}_{i=1}^N$, number of trees $B\in\mathbb{N}$, maximum depth $K\in\mathbb{N}$, minimum leaf size $n_{\min}\in\mathbb{N}$, per-node feature budget $m_{\mathrm{try}}$, split rule $\mathrm{SplitRule}$.}
\KwOut{Ensemble predictor $\hat f_B(x)=\frac{1}{B}\sum_{b=1}^B \hat f^{(b)}(x)$.}

\For{$b \leftarrow 1$ {\bf to} $B$}{
  \tcp{Bootstrap sample}
  Draw $N$ indices $I^{(b)}=(i_1,\dots,i_N)$ uniformly with replacement from $[N]$ and set $\mathcal{D}^{(b)}=\{(X_{i_\ell},Y_{i_\ell})\}_{\ell=1}^N$.

  \tcp{Grow a full tree on $\mathcal{D}^{(b)}$}
  Initialize root node with index set $S_{\text{root}}=[N]$ and depth $0$.

  \While{there exists a node $(S,\text{depth})$ with $|S|>n_{\min}$ and $\text{depth}<K$}{
    \tcp{Sample a random subset of coordinates at this node}
    Draw without replacement a subset $\mathcal{J}\subseteq [d]$ with $|\mathcal{J}|=m_{\mathrm{try}}\in [d]$.

    \tcp{Choose best coordinate/threshold according to the chosen impurity criterion}
    $(j^\star,t^\star)\leftarrow \mathrm{SplitRule}(S,\mathcal{J},\mathcal{D}^{(b)})$.

    \tcp{Partition the node}
    $S_L \leftarrow \{i\in S:\, X_{ij^\star}\le t^\star\}$,\quad
    $S_R \leftarrow \{i\in S:\, X_{ij^\star}> t^\star\}$.

    \tcp{Create children; stop if a child is too small}
    \eIf{$|S_L|\le n_{\min}$ \textbf{ or } $\text{depth}+1=K$}{
      mark left child as a leaf with prediction $\hat\mu_{S_L}=\frac{1}{|S_L|}\sum_{i\in S_L}Y_i$;
    }{
      push $(S_L,\text{depth}+1)$ to the queue;
    }
    \eIf{$|S_R|\le n_{\min}$ \textbf{ or } $\text{depth}+1=K$}{
      mark right child as a leaf with prediction $\hat\mu_{S_R}=\frac{1}{|S_R|}\sum_{i\in S_R}Y_i$;
    }{
      push $(S_R,\text{depth}+1)$ to the queue;
    }
  }

  \tcp{Define the tree predictor by leaf means}
  For any $x\in\mathbb{R}^d$, let $\hat f^{(b)}(x)$ be the mean $\hat\mu_{S}$ of the leaf whose cell contains $x$.
}
\end{algorithm}





\begin{algorithm}[p]
\caption{Random-Dimension Random Forest (feature subsampling at each node)}
\label{alg:rf-randdim}
\KwIn{Sample $\mathcal{D}=\{(X_i,Y_i)\}_{i=1}^N$, number of trees $B$, maximum depth $K$, minimum leaf size $n_{\min}$, per-node feature budget $m_{\mathrm{try}}=1$, split rule $\mathrm{SplitRule}$.}
\KwOut{Ensemble predictor $\hat f_B(x)=\frac{1}{B}\sum_{b=1}^B \hat f^{(b)}(x)$.}

\For{$b \leftarrow 1$ {\bf to} $B$}{
  Draw a bootstrap sample $\mathcal{D}^{(b)}$ of size $N$ from $\mathcal{D}$.

  Initialize the root node with index set $S_{\text{root}}=[N]$ and depth $0$.

  \While{there exists a node $(S,\text{depth})$ with $|S|>n_{\min}$ and $\text{depth}<K$}{
    \tcp{Sample a random subset of coordinates at this node}
    Draw without replacement a subset $\mathcal{J}\subseteq [d]$ with $|\mathcal{J}|=m_{\mathrm{try}}$, where we set $m_{\mathrm{try}}=1$.

    \tcp{Choose best coordinate/threshold using only $\mathcal{J}$}
    $(j^\star,t^\star)\leftarrow \mathrm{SplitRule}(S,\mathcal{J},\mathcal{D}^{(b)})$.

    \tcp{Partition and create children}
    $S_L \leftarrow \{i\in S:\, X_{ij^\star}\le t^\star\}$,\quad
    $S_R \leftarrow \{i\in S:\, X_{ij^\star}> t^\star\}$.

    \eIf{$|S_L|\le n_{\min}$ \textbf{ or } $\text{depth}+1=K$}{
      make left child a leaf with prediction $\hat\mu_{S_L}$;
    }{
      push $(S_L,\text{depth}+1)$;
    }
    \eIf{$|S_R|\le n_{\min}$ \textbf{ or } $\text{depth}+1=K$}{
      make right child a leaf with prediction $\hat\mu_{S_R}$;
    }{
      push $(S_R,\text{depth}+1)$;
    }
  }

  Define $\hat f^{(b)}(x)$ by the mean of the leaf containing $x$.
}
\end{algorithm}


\end{document}